\theoremstyle{plain} \newtheorem{theorem}{Theorem}
\theoremstyle{plain} \newtheorem{lemma}{Lemma}[section]
\theoremstyle{remark} 
\theoremstyle{plain} 
\theoremstyle{plain} 
\theoremstyle{plain} \newtheorem{claim}[lemma]{Claim}
\theoremstyle{definition} \newtheorem{note}{Note}[section]
\theoremstyle{definition} \newtheorem{step}{Step}
\newcommand{\rmthree}{\mathrm{I\hspace{-1pt}I\hspace{-1pt}I}}
\newcommand{\rmfour}{\mathrm{I\hspace{-1pt}V}}
\newcommand{\rmfive}{\mathrm{V}}
\newcommand{\rmsix}{\mathrm{V\hspace{-1pt}I}}
\newcommand{\saxes}{\mathrm{Span}_{\mathbb{F}}\{a_i\mid i\in\mathbb{Z}\}}
\newcommand{\adim}{D}
\newcommand{\vdim}{d}
\newcommand{\ch}{\mathrm{ch}}
\newcommand{\eisp}[2]{M({#1},{#2})}
\newcommand{\threealg}[3]{\rmthree({#1},{#2},{#3})}
\newcommand{\fourone}[2]{\rmfour_1({#1},{#2})}
\newcommand{\fourtwo}[3]{\rmfour_2({#1},{#2},{#3})}
\newcommand{\fourthree}{\rmfour_3(\frac{1}{2},2)}
\newcommand{\fiveone}[1]{\rmfive_1({#1},\frac{5{#1}-1}{8})}
\newcommand{\fivetwo}[1]{\rmfive_2({#1},\frac{1}{2})}
\newcommand{\sixone}[1]{\rmsix_1({#1},\frac{#1}{2})}
\newcommand{\sixtwo}[1]{\rmsix_2({#1},\frac{-{#1}^2}{4(2{#1}-1)})}
\newcommand{\infalg}{\mathrm{Z}(2,\frac{1}{2})}
\newcommand{\ifusion}{\mathfrak{V}(4,3)}
\newcommand{\chf}[1]{\ch\mathbb{#1}}
\title{On the classification of 2-generated axial algebras of Majorana type}
\author{Takahiro Yabe}
\begin{document}
\maketitle

\begin{abstract}
\noindent
A class of axial algebras generated by two axes with eigenvalues 0, 1, $\eta$ and $\xi$ called axial algebras of Majorana type is introduced and classified when they are 2-generated, over fields of characteristics neither 2 nor 5 and there exists an automorphism switching generating axes..
The class includes deformations of nine Norton-Sakuma algebras.
Over fields of characteristics 5, the axial algebras of Majorana type with the axial dimension at most 5 are clasified.
\end{abstract}

\section{Introduction}
In \cite{s07}, S.\ Sakuma considered pairs of Ising vectors of Griess algebras of vertex operater algebras and proved that any subalgebra generated by such a pair is isomorphic to one of the nine Norton-Sakuma algebras \cite{c85}.
In \cite{i09}, A.\ A.\ Ivanov axiomatzed Majorana algebras, which are a generalization of the 196,884 dimensional Griess algebra called the monstrous Griess algebra, and in \cite{ipss10}, A.\ A.\ Ivanov et al.\ proved that a 2-generated Majorana algebra is isomorphic to one of those algebras.
The purpose of this paper is to study the clasification of the case when the eigenvalues $\frac{1}{4}, \frac{1}{32}$ in Majorana algebras are replaced by arbitrary scalars $\xi,\eta$ in an arbitrary base field.

A language suitable for such generalization is given by the concept of an axial algebra introduced by J.\ I.\ Hall et al.\ in \cite{hrs13}.
An axial algebra is a commutative nonassociative algebra generated by \textit{axes}, a distinguished set of idempotents, subject to a condition called a \textit{fusion rule} given in terms of the eigenvalues of the axes. 
For a fusion rule $\mathcal{F}$ and an algebra $A$, an idempotent $a$ of $A$ is called an {\it $\mathcal{F}$-axis}\/ if its action on $A$ is semisimple and the eigenspaces obey the fusion rule $\mathcal{F}$.
An algebra generated by $\mathcal{F}$-axes is called an {\it $\mathcal{F}$-axial algebra}.
A Majorana algebra is a $\ifusion$-axial algebra with an associative and positive definite bilinear form, where $\ifusion$ is a fusion rule with the set of eigenvalues $\{0,1,\frac{1}{4},\frac{1}{32}\}$ and is the fusion rule coming from the fusion rules of the Ising model in conformal field theory.
See \cite{hrs13} for details.

Let us consider an arbitrary pair $(\xi,\eta)$ of elements of the base field.
For such a pair, let $\mathcal{F}(\xi,\eta)$ be the fusion rule givrn in table \ref{tabfrule}.
The fusion rule $\ifusion$ is nothing else than $\mathcal{F}(\frac{1}{4},\frac{1}{32})$.

In \cite{mat03}, A.\ Matsuo considered the case when the $\xi$-eigenspaces of axes are $0$ for Griess algebras.  
In \cite{hrs15}, J.\ I.\ Hall, F.\ Rehren and S.\ Shpectorov generalized the result for a class of axial algebras called the primitive axial algebra of Jordan type, the eigenvalues of whose axis are $\{0,1,\eta\}$, and classified them when they are generated by 2 axes.
In \cite{r14}, F.\ Rehren deformed Norton-Sakuma algebras to the axial algebras such that the eigenvalues of their axes are $\{0,1,\xi,\eta\}$.

In the present paper, we consider a class of axial algebras over a field, which we call 2-generated \textit{axial algebras of Majorana type} $(\xi,\eta)$ and classify them in the case when there exists an automorphism switching 2 generating axes. We call such an automorphism a \textit{flip} following \cite{hrs13}.
Here, $\xi$ and $\eta$ are arbitrary elements of the base field.

The 2-generated axial algebras considered in \cite{r14} and \cite{hrs15} are particular cases of axial algebras of Majorana type $(\xi,\eta)$, but there exist other axial algebras of Majorana type.
So the classification done in this paper is a generalization of the classification of \cite{hrs13}.

\begin{table}[h]
\begin{center}
\begin{tabular}{c|cccc}
$\star$&0&1&$\xi$&$\eta$\\ \hline
0&0&0&$\xi$&$\eta$\\ 
1&0&0&$\xi$&$\eta$\\ 
$\xi$&$\xi$&$\xi$&$\{0,1\}$&$\eta$\\ 
$\eta$&$\eta$&$\eta$&$\eta$&$\{0,1,\xi\}$
\end{tabular}
\caption{The fusion rule $\mathcal{F}(\xi,\eta)$}
\label{tabfrule}
\end{center}
\end{table}

In Section 2, we introduce axial algebras of Majorana type $(\xi,\eta)$ and study their fundamental properties after recalling relevant terminologies on general axial algebras. 
In Section 3, we give a list of axial algebras of Majorana type $(\xi,\eta)$ admitting a flip.
In Section 4, we state our main theorem that the algebras listed in Section 3 exaust all the axial algebras of Majorana type $(\xi,\eta)$ admitting a flip if the base field is of characteristic neither 2 nor 5 and that such a result holds for characteristic 5 when the algebra satiffies a condition about linear relations among axes.
We also give an outline of their proofs.
Finally, we give the details of the proofs in Section 5.

The results of this paper were presented in RIMS Workshop `Research on algebraic combinatorics, related groups and algebras,' held in Kyoto in December, 2019. 
The author noticed that the algebra $\infalg$  given in this paper was independently constructed in a recent preprint \cite{fms20}. 

\section{Axial algebras of Majorana type}
Let $\mathbb{F}$ be a field of characteristic not 2.

\subsection{Axial algebra of Majorana type}
For distinct elements $\xi$ and $\eta$ of $\mathbb{F}\setminus\{0,1\}$, an \textit{axial algebras of Majorana type} $(\xi,\eta)$ is a commutative nonassociative algebra algebra $M$ equipped with a set of generators $A$ such that for all $a\in A$, the following conditions are satisfied.
\begin{itemize}
\item[\rm{(1)}]$M=\bigoplus_{\alpha\in\{1,0,\xi,\eta\}}\eisp{a}{\alpha}$, where $\eisp{a}{\alpha}=\{w\in M\mid aw=\alpha w\}$ for all $a\in M$ and $\alpha\in\mathbb{F}$.
\item[\rm{(2)}]$\eisp{a}{1}=\mathbb{F}a$.
\item[\rm{(3)}]$\eisp{a}{\alpha}\eisp{a}{\beta}\subset\bigoplus_{\gamma\in\alpha\star\beta}\eisp{a}{\gamma}$ for all $\alpha,\beta\in\{0,1,\xi,\eta\}$, where $\star$ is the fusion rule $\mathcal{F}(\xi,\eta)$ of table \ref{tabfrule}.
\end{itemize}
The fusion rule $\mathcal{F}(\xi,\eta)$ is given in \cite{r14}.

When a non-trivial permutation of axes of 2-generated $\mathcal{F}(\xi,\eta)$-axial algebra $M$ induces an automorphism of $M$, we call such an automorphism a \textit{flip} (See \cite{hrs13}).
The flip is unique if it exists.
We denote it by $\theta$.

Let $a_0$ and $a_1$ are generating axes of $M$.
For each $i\in\{0,1\}$, let $\tau_i$ denote the automorphism called the \textit{Miyamoto involution}\/ which is $1$ on $\eisp{a_i}{1}\oplus \eisp{a_i}{0}\oplus \eisp{a_i}{\xi}$ and $-1$ on $\eisp{a_i}{\eta}$ (See \cite{mi96}).

\subsection{Axial dimension and linear relation of axes}
Let $M$ be an axial algebra of Majorana type $(\xi,\eta)$ generated by $\{a_0,a_1\}$ admitting a flip $\theta$. 
Recall the Miyamoto involutions $\tau_0$ and $\tau_1$. 

For each integer $i\in\mathbb{Z}$, we set $a_{2i}=(\tau_1\circ\tau_0)^i(a_0)$ and $a_{2i+1}=(\tau_1\circ\tau_0)^i(a_1)$.
Then $a_i$ is an axis, $\theta(a_i)=a_{1-i}$, $\tau_0(a_i)=a_{-i}$, and $\tau_1(a_i)=a_{2-i}$ for all $i\in\mathbb{Z}$.
The group $G=\langle\tau_0,\theta\rangle$ is a dihedral group.
Let $f_i\in G$ be an automorphism such that $f_i=(\theta\circ\tau_0)^i$.

We call the dimension of the subspace $\saxes$ the \textit{axial dimension} of $M$ and denote it by $\adim=\mathrm{Adim}\,M$, while the dimension of the whole algebra $M$ by $\vdim=\dim M$.

Let $\{i_k\}_{k\in\mathbb{Z}_{>0}}$ be the unique sequence of integers such that $i_1=0$, $i_{2k}=1-i_{2k-1}$ and $i_{2k+1}=-i_{2k}$ for all $k\in\mathbb{Z}_{>0}$.
Then $i_{2k}=k$, $i_{2k-1}=1-k$, $a_{i_{2k}}=\theta(a_{i_{2k-1}})$ and $a_{i_{2k+1}}=\tau_0(a_{i_{2k}})$.

Linear combinations of axes are described by the following two lemmas.

\begin{lemma}\label{lem1}
\sl
$\{a_i\mid1\leq i\leq\adim\}$ is a basis of $\saxes$.
\end{lemma}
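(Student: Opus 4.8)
The plan is to use the automorphism $f_1=\theta\circ\tau_0\in G$, which acts on the generating axes as a \emph{shift}. From the relations $\tau_0(a_i)=a_{-i}$ and $\theta(a_j)=a_{1-j}$ recorded above one computes $f_1(a_i)=\theta(a_{-i})=a_{i+1}$ for every $i\in\mathbb{Z}$; being a product of automorphisms in $G$, $f_1$ is an algebra automorphism of $M$, so it restricts to a linear automorphism of $V:=\saxes$ permuting the spanning set $\{a_i\mid i\in\mathbb{Z}\}$, with $f_1^{-1}(a_i)=a_{i-1}$. Thus any linear relation among the $a_i$ may be translated freely to the left or right. Since $\dim V=\adim<\infty$, the vectors $a_1,\dots,a_{\adim+1}$ are linearly dependent, so there is a smallest integer $m$ with $a_m\in\mathrm{Span}_{\mathbb{F}}\{a_1,\dots,a_{m-1}\}$; since $a_1\neq 0$ we have $m\ge 2$, and we write $a_m=\sum_{j=1}^{m-1}c_ja_j$. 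By minimality of $m$, no $a_k$ with $k<m$ lies in the span of $a_1,\dots,a_{k-1}$, so $\{a_1,\dots,a_{m-1}\}$ is linearly independent.

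The key point is that $c_1\neq 0$. Indeed, if $c_1=0$ then $a_m=\sum_{j=2}^{m-1}c_ja_j$, and applying $f_1^{-1}$ gives $a_{m-1}=\sum_{j=2}^{m-1}c_ja_{j-1}\in\mathrm{Span}_{\mathbb{F}}\{a_1,\dots,a_{m-2}\}$, contradicting the minimality of $m$. Hence $c_1\neq 0$ and we may solve $a_1=c_1^{-1}\bigl(a_m-\sum_{j=2}^{m-1}c_ja_j\bigr)$.

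Set $U=\mathrm{Span}_{\mathbb{F}}\{a_1,\dots,a_{m-1}\}$. Applying $f_1^{\,n-m}$ to $a_m=\sum_{j=1}^{m-1}c_ja_j$ expresses, for each $n\ge m$, the axis $a_n$ as a combination of $a_{n-m+1},\dots,a_{n-1}$; a straightforward induction then yields $a_n\in U$ for all $n\ge 1$. Applying $f_1^{\,-k}$ to the formula for $a_1$ expresses $a_{1-k}$ as a combination of $a_{2-k},\dots,a_{m-k}$, and a second (downward) induction, whose base case $k=1$ gives $a_0\in U$, yields $a_{1-k}\in U$ for all $k\ge 1$. Hence $a_i\in U$ for every $i\in\mathbb{Z}$, so $\saxes=U=\mathrm{Span}_{\mathbb{F}}\{a_1,\dots,a_{m-1}\}$; comparing dimensions with the linear independence of $a_1,\dots,a_{m-1}$ forces $m-1=\adim$, and therefore $\{a_i\mid 1\le i\le\adim\}$ is a basis of $\saxes$.

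I expect the only genuine obstacle to be the verification that $c_1\neq 0$ — that the leftmost coefficient in the minimal dependence cannot vanish — since this is precisely what lets the relation propagate in both directions; the remainder is routine dihedral-shift bookkeeping once one has observed that $\theta\circ\tau_0$ realizes the shift $a_i\mapsto a_{i+1}$.
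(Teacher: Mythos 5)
Your proof is correct, and it takes a genuinely different (though related) route from the paper's. The paper works with the zigzag-ordered list $a_{i_1},a_{i_2},\dots$ (with $i_1=0$, $i_2=1$, $i_3=-1$, $i_4=2,\dots$) and, given the first dependence $a_{i_{k+1}}=\sum_{j}\alpha_j a_{i_j}$, applies the two reflections $\theta$ and $\tau_0$ alternately; since $\{i_1,\dots,i_k\}$ is an interval of integers preserved by the relevant reflection at each stage, the span stabilizes and no coefficient ever needs to be inverted. You instead use only the translation $f_1=\theta\circ\tau_0$ (correctly identified as $a_i\mapsto a_{i+1}$), take the minimal dependence among the consecutive axes $a_1,\dots,a_m$, and propagate it in both directions; the price is the extra observation that the extremal coefficient $c_1$ is nonzero — your $f_1^{-1}$ argument for this is right, and it is indeed the one nontrivial point — and the payoff is that you prove the statement literally as written, with the consecutive index set $\{1,\dots,\adim\}$, whereas the paper's proof actually produces a spanning set indexed by $\{i_1,\dots,i_{\adim}\}$ and leaves the translation to consecutive indices implicit. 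The paper's bookkeeping in turn hands the dependence relation directly to the next lemma in the $\theta$-/$\tau_0$-symmetrized form needed for the even/odd dichotomy. One small caveat: your argument explicitly assumes $\adim<\infty$; that is the only case in which the statement as written is meaningful, so nothing is lost.
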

\begin{proof}
Set $k\in\mathbb{Z}$.
Assume that $a_{i_{k+1}}=\sum_{j=1}^k\alpha_ja_{i_j}$ holds for some $\alpha_1,\ldots,\alpha_k\in\mathbb{F}$.
Once $a_{i_{k+2}}\in\mathit{Span}_{\mathbb{F}}\{a_{i_j}\mid1\leq j\leq k\}$ is shown, then it follows that $a_i\in\mathrm{Span}_{\mathbb{F}}\{a_{i_j}\mid1\leq j\leq k\}$ for all $i$ by induction, and hence the proof is completed. 

If $k=2l$ for some $l\in\mathbb{Z}$, then $a_{i_{k+2}}=a_{1-i_{k+1}}=\theta(a_{i_{k+1}})=\sum_{j=1}^k\alpha_j\theta(a_{i_j})=\sum_{j=1}^k\alpha_ja_{1-i_j}$.
Since $\{i_1,\ldots,i_{k}\}=\{-l+1,\ldots,l\}$, $1-i_j\in\{i_1,\ldots,i_k\}$ for all $1\leq j\leq k$ and  $a_{i_{k+2}}\in\mathit{Span}_{\mathbb{F}}\{a_{i_j}\mid1\leq j\leq k\}$.

If $k=2l+1$ for some $l\in\mathbb{Z}$, then $a_{i_{k+2}}=a_{-i_{k+1}}=\tau_0(a_{i_{k+1}})=\sum_{j=1}^k\alpha_j\tau_0(a_{i_j})=\sum_{j=1}^k\alpha_ja_{-i_j}$.
Since $\{i_1,\ldots,i_k\}=\{-l,\ldots,l\}$ $-i_j\in\{i_1,\ldots,i_k\}$ for all $1\leq j\leq k$ and $a_{i_{k+2}}\in\mathit{Span}_{\mathbb{F}}\{a_{i_j}\mid1\leq j\leq k\}$.
\end{proof}

\begin{lemma}
\sl
If $\adim<\infty$, then there exist $k\in\mathbb{Z}_{>0}$ and $\alpha_1,\ldots,\alpha_k\in\mathbb{F}$ such that one of the following properties holds:
\begin{itemize}
\item[\rm{(1)}]$\adim=2k-1$, $\alpha_k\neq0$ and $\sum_{j=1}^{k}\alpha_j(a_{i_{2j-1}}+a_{i_{2j}})=0$.
\item[\rm{(2)}]$\adim=2k-1$, $\alpha_k\neq0$ and $\sum_{j=1}^{k}\alpha_j(a_{i_{2j-1}}-a_{i_{2j}})=0$.
\item[\rm{(3)}]$\adim=2k$, $\alpha_k\neq0$ and $\sum_{j=2}^{k}\alpha_j(a_{i_{2j-1}}+a_{i_{2j-2}})+\alpha_1 a_0=0$.
\item[\rm{(4)}]$\adim=2k$, $\alpha_k\neq0$  and  $\sum_{j=1}^{k} \alpha_j(a_{i_{2j+1}}-a_{i_{2j}})=0$.
\end{itemize}
\end{lemma}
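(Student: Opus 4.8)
The plan is to exploit the structure of the index set $\{i_j\}$ established in the proof of Lemma \ref{lem1}. Recall from there that $\{i_1,\dots,i_{2l}\} = \{-l+1,\dots,l\}$ and $\{i_1,\dots,i_{2l+1}\} = \{-l,\dots,l\}$; in particular, for $\adim = 2k-1$ the set $\{i_1,\dots,i_{2k-1}\}$ is symmetric about $0$, namely $\{-k+1,\dots,k-1\}$, whereas for $\adim = 2k$ the set $\{i_1,\dots,i_{2k}\} = \{-k+1,\dots,k\}$ fails to be symmetric only in that it contains $k$ but not $-k$. By Lemma \ref{lem1}, $\{a_{i_1},\dots,a_{i_{\adim}}\}$ is a basis of $\saxes$, and $a_{i_{\adim+1}}$ lies in their span, giving a single linear relation $a_{i_{\adim+1}} = \sum_{j=1}^{\adim}\beta_j a_{i_j}$ with a distinguished nonzero coefficient (the coefficient of the ``extremal'' basis vector). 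I will rewrite this relation so that it displays the $\theta$- or $\tau_0$-symmetry dictated by the parity of $\adim$.

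First, suppose $\adim = 2k-1$. Then $i_{\adim+1} = i_{2k} = k$ and $i_{\adim} = i_{2k-1} = 1-k$, and the relation reads $a_k = \sum_{j=1}^{2k-1}\beta_j a_{i_j}$, i.e.\ a relation $R$ among $a_{-k+1},\dots,a_k$ in which the coefficient of $a_{-k+1}$ is forced to be nonzero (else $a_k$ would be a combination of $a_{-k+2},\dots,a_{k-1}$, already spanned, contradicting that these are a basis together with $a_{1-k}$; here one uses that $\{i_1,\dots,i_{2k-2}\}=\{-k+2,\dots,k-1\}$). Now apply $\theta$, which sends $a_i \mapsto a_{1-i}$, hence permutes the index set $\{-k+1,\dots,k\}$ by $i\mapsto 1-i$ \emph{except} that $-k+1\mapsto k$ and $k \mapsto -k+1$, but crucially preserves $\{-k+1,\dots,k\}$. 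So $\theta(R)$ is another relation among the same axes; since the relation among a basis-plus-one-extra vector is unique up to scalar, $\theta(R) = \pm R$ (the sign because $\theta$ is an involution). The $+$ case gives a $\theta$-symmetric relation, which after collecting terms $a_i + a_{1-i}$ and reindexing via $a_{i_{2j-1}} + a_{i_{2j}}$ is exactly case (1); the $-$ case gives case (2). One must check the coefficient of the central pair, i.e.\ that $\alpha_k \neq 0$, which follows from the nonvanishing of the $a_{-k+1}$-coefficient noted above.

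Next, suppose $\adim = 2k$. Then $i_{\adim+1} = i_{2k+1} = -k$, $i_{\adim} = i_{2k} = k$, and the relation expresses $a_{-k}$ in terms of $a_{-k+1},\dots,a_k$, with the coefficient of $a_k$ nonzero. This is now a relation $R$ among $a_{-k},\dots,a_k$, symmetric index set about $0$. Apply $\tau_0$, which sends $a_i\mapsto a_{-i}$ and hence preserves $\{-k,\dots,k\}$; by uniqueness up to scalar, $\tau_0(R) = \pm R$. In the $+$ case the relation is $\tau_0$-symmetric, pairing $a_i + a_{-i}$; reindexing (and isolating the fixed term $a_0$) gives case (3). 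In the $-$ case the relation pairs $a_i - a_{-i}$, and reindexing via $a_{i_{2j+1}} - a_{i_{2j}}$ gives case (4) — note $a_0$ drops out automatically since it is $\tau_0$-fixed, consistent with the sum in (4) starting at $j=1$ with no lone $a_0$ term. Again $\alpha_k \neq 0$ comes from the nonvanishing of the extremal coefficient in $R$.

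The main obstacle I anticipate is purely bookkeeping: matching the raw relation, once symmetrized, to the precise reindexed forms $\sum\alpha_j(a_{i_{2j-1}}\pm a_{i_{2j}})$ and $\sum\alpha_j(a_{i_{2j+1}}-a_{i_{2j}})$ stated in the four cases, and verifying in each case that the surviving leading coefficient $\alpha_k$ is nonzero. The conceptual content — that the unique extra linear relation must be an eigenvector of the order-two symmetry ($\theta$ in odd axial dimension, $\tau_0$ in even axial dimension) that stabilizes the relevant window of axes — is immediate from Lemma \ref{lem1} plus uniqueness of the relation up to scalars; everything else is reindexing, with care needed only about which endpoint index ($-k$ versus $k$, or $-k+1$ versus $k$) is the ``new'' one and hence carries the guaranteed nonzero coefficient.
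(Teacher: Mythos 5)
Your proposal is correct and follows essentially the same route as the paper: take the unique (up to scalar) linear relation among $a_{i_1},\dots,a_{i_{\adim+1}}$, observe that the relevant window of indices is stable under $\theta$ (odd $\adim$) or $\tau_0$ (even $\adim$), and split into the symmetric and antisymmetric cases. The paper phrases the dichotomy by adding the transformed relation to the original and asking whether the leading coefficient of the sum vanishes, while you phrase it as $\theta(R)=\pm R$ (resp.\ $\tau_0(R)=\pm R$) via one-dimensionality of the relation space; these are the same argument, and your version even makes the nonvanishing of $\alpha_k$ immediate from $\beta_{\adim+1}\neq0$.
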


\begin{proof}
By Lemma \ref{lem1}, there exist $\beta_1,\ldots,\beta_{\adim+1}\in\mathbb{F}$ such that $\beta_{\adim+1}\neq0$ and $\sum_{j=1}^{\adim+1}\beta_j a_{i_j}=0$.
If $\adim=2k-1$ for some $k\in\mathbb{Z}_{>0}$, then 
\begin{eqnarray*}
0=\theta(0)+0=\sum_{j=0}^{k}(\beta_{2j-1}+\beta_{2j})(a_{i_2j}+a_{i_2j-1}).
\end{eqnarray*}
If $\beta_{2k-1}+\beta_{2k}\neq0$, then the property (1) holds.
Otherwise, all of the coefficients are zero since the dimension of $\saxes$ becomes less than $\adim$ if there exists a non-zero coefficient.
Hence the property (2) holds.

In the case when $\adim=2k$, by considering $\tau_0(0)+0$, it follows that 
$$\sum_{j=1}^{k}(\beta_{2j+1}+\beta_{2j})(a_{i_2j}+a_{i,2j+1})+2\beta_1a_0=0.$$
If $\beta_{2k+1}+\beta_{2k}\neq0$, then the property (3) holds.
Otherwise, all of the coefficients are zero and the property (4) holds by a similar argument as above.
\end{proof}

We say that $M$ satisfies an \textit{even relation} when the property (1) or (3) holds and $M$ satisfies an \textit{odd relation} otherwise.

\subsection{Some useful elements of $M$}
 We follow the notations introduced in the preceding subsection.

Consider the element 
$$p_{i,j}=a_ja_{i+j}-\eta(a_i+a_{i+j})$$
 for $i,j\in\mathbb{Z}$.
Then $p_{1,j}=a_ja_{j+1}-\eta(a_j+a_{j+1})$ is invariant under $G$ and $p_{2,0}=a_ja_{j+2}-\eta(a_j+a_{j+2})$ and $p_{2,1}=a_ja_{j+2}-\eta(a_j+a_{j+2})$ are invariant under $\tau_0$ and switched by the flip $\theta$. 
In other words, 
$$
\def\arraystretch{1.5}
\begin{array}{ll}
\hbox{$\tau_i(p_{1,j})=p_{1,j}$, $\tau_i(p_{2,0})=p_{2,0}$, $\tau_i(p_{2,1})=p_{2,1}$,}\cr
\hbox{$\theta(p_{1,j})=p_{1,j}$, $\theta(p_{2,0})=p_{2,1}$, and $\theta(p_{2,1})=p_{2,0}$}
\end{array}
$$
 for $i=0,1$ (See \cite{r14}, section 3).
The element $p_1=p_{1,0}=a_0a_1-\eta(a_0+a_1)$ will play a particular role later.

For each integer $i\in\mathbb{Z}$, let $\varphi_i:M\to\mathbb{F}$ be the linear map such that $\varphi_i(x)a_i$ is the projection of $x\in M$ to $\eisp{a_i}{1}=\mathbb{F}a_i$.
We set $\lambda_i=\varphi_0(a_i)$.

Set 
$$
\def\arraystretch{1.5}
\begin{array}{l}
\hbox{$x_i=p_{i,0}-(\lambda_i-\eta)a_0+\frac{\eta}{2}(a_{-i}+a_i)$,}\cr
\hbox{$y_i=a_i-a_{-i}$,}\cr
\hbox{$z_i=p_{i,0}-((1-\xi)\lambda_i-\eta)a_0-\frac{\xi-\eta}{2}(a_{-i}+a_i)$}.\cr
\end{array}
$$
Then $z_ia_0=0$, $x_ia_0=\xi x_i$ and $y_ia_0=\eta y_i$. (See \cite{r14}, section 3).

Let us consider the multiplication of $p_{i,j}$.
Since $f_j(z_ia_0)=0$,
\begin{eqnarray}
\label{multiaipij}
a_jp_{i,j}=(\xi-\eta)p_{i,j}+&&((1-\xi)\lambda_i+\eta(\xi-\eta-1))a_j \nonumber \\
&&+\frac{1}{2}\eta(\xi-\eta)(a_{i+j}+a_{j-i})
\end{eqnarray}
for all $i,j\in\mathbb{Z}_{\geq0}$.

\begin{lemma}\label{lem2}
\sl
Let $q\in M$ and $\pi\in\mathbb{F}$.
If $a_i\in\eisp{q}{\pi}$ for all $i\in\mathbb{Z}$, then $p_{i,j}\in\eisp{q}{\pi}$ for all $i,j\in\mathbb{Z}$.
\end{lemma}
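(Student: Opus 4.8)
The approach is to prove first that \emph{every} $a_0$-eigencomponent of every axis $a_k$ already lies in $\eisp{q}{\pi}$, and then to read off $p_{i,j}\in\eisp{q}{\pi}$ from this together with the fusion rule; the general second index $j$ is then reduced to the case $j=0$ by transporting along the shift automorphism $f_j$.

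First I would decompose $q$ with respect to $a_0$. Since $qa_0=\pi a_0$ and $\xi,\eta\neq0$, conditions (1) and (2) of the definition force $q=\pi a_0+w$ with $w\in\eisp{a_0}{0}$. Because the $0$-row of $\mathcal{F}(\xi,\eta)$ is ``diagonal'' ($0\star0=\{0\}$, $0\star\xi=\{\xi\}$), left multiplication by $w$, and hence by $q$, maps each of $\eisp{a_0}{0}$ and $\eisp{a_0}{\xi}$ into itself. Consequently, if some $v\in\eisp{q}{\pi}$ happens to lie in $\eisp{a_0}{0}\oplus\eisp{a_0}{\xi}$, then, writing $v=v_0+v_\xi$ accordingly and comparing components in $M=\bigoplus_\alpha\eisp{a_0}{\alpha}$ inside the equality $qv=\pi v$, one gets $qv_0=\pi v_0$ and $qv_\xi=\pi v_\xi$, i.e.\ $v_0,v_\xi\in\eisp{q}{\pi}$.

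Next I would exhibit such a $v$ for each axis. Write $a_k=\varphi_0(a_k)a_0+v_0+v_\xi+v_\eta$ for the $a_0$-eigendecomposition of $a_k$. Since $\tau_0$ fixes $a_0$, is $+1$ on $\eisp{a_0}{1}\oplus\eisp{a_0}{0}\oplus\eisp{a_0}{\xi}$ and $-1$ on $\eisp{a_0}{\eta}$, one has $v_\eta=\tfrac12(a_k-\tau_0(a_k))=\tfrac12(a_k-a_{-k})\in\saxes\subseteq\eisp{q}{\pi}$ (here $\mathrm{char}\,\mathbb{F}\neq2$ is used). Hence $v:=a_k-\varphi_0(a_k)a_0-v_\eta=v_0+v_\xi$ lies in $\eisp{q}{\pi}$ and in $\eisp{a_0}{0}\oplus\eisp{a_0}{\xi}$, so the previous step gives $v_0,v_\xi\in\eisp{q}{\pi}$; together with $v_\eta$ and $\varphi_0(a_k)a_0$ this shows that every $a_0$-eigencomponent of $a_k$ lies in $\eisp{q}{\pi}$.

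Then, for every $k$, $a_0a_k=\varphi_0(a_k)a_0+\xi v_\xi+\eta v_\eta\in\eisp{q}{\pi}$, whence $p_{k,0}=a_0a_k-\eta(a_0+a_k)\in\eisp{q}{\pi}$. For arbitrary $i,j$ one has $p_{i,j}=f_j(p_{i,0})$, and the hypothesis passes to $f_j^{-1}(q)$ because $f_j$ permutes the axes: $f_j^{-1}(q)\,a_m=f_j^{-1}\!\big(q\,f_j(a_m)\big)=\pi a_m$ for all $m$. Applying the case $j=0$ to $f_j^{-1}(q)$ and then $f_j$ yields $p_{i,j}\in\eisp{q}{\pi}$. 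The crux of the argument — and the step I expect to need the most care — is the component-comparison: it works exactly because $\eisp{a_0}{1}=\mathbb{F}a_0$ and $q$ cannot mix $\eisp{a_0}{0}$ with $\eisp{a_0}{\xi}$; the rest is bookkeeping with the fusion rule and the involutions.
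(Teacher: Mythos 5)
Your proof is correct, but it takes a genuinely different route from the paper's. The paper first records the Seress condition --- $a_j(wz)=(a_jw)z$ for $z\in\eisp{a_j}{0}$, which it justifies by the diagonality of the $0$-row of the fusion rule --- and then applies it with $w=a_{i+j}$ and $z=q-\pi a_j\in\eisp{a_j}{0}$; expanding $(a_ja_{i+j})(q-\pi a_j)-a_j\bigl(a_{i+j}(q-\pi a_j)\bigr)=0$ collapses in one line to $(a_ja_{i+j})q=\pi\, a_ja_{i+j}$, which handles all pairs $(i,j)$ simultaneously. You instead use the same diagonality to show that $q=\pi a_0+w$ preserves $\eisp{a_0}{0}$ and $\eisp{a_0}{\xi}$, isolate the $\eta$-component of $a_k$ via $\tau_0$, and conclude that every $a_0$-eigencomponent of $a_k$, hence $a_0a_k$ itself, lies in $\eisp{q}{\pi}$; the general second index is then recovered by conjugating with $f_j$, which is a valid reduction since $f_j^{-1}(q)$ inherits the hypothesis. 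Your argument is somewhat longer and proves a little more along the way (each $a_0$-eigencomponent of each axis is a $\pi$-eigenvector of $q$), while the paper's is shorter and needs no reduction to $j=0$. One simplification available to you: since $0\star\eta=\eta$ as well, $q$ also preserves $\eisp{a_0}{\eta}$, so you could compare all three components of $a_k-\varphi_0(a_k)a_0$ at once and dispense with the $\tau_0$ step entirely.
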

In order to prove this lemma, we use the property known as the \textit{Seress condition} (cf.\ \cite{hrs13}, Section 3): 
for $w\in M$ and $z\in \eisp{a_i}{0}$, $a_i(wz)=(a_iw)z$.
This property is indeed satisfied for a $(\xi,\eta)$-axial algebra of Majorana type $M$ since $\eisp{a_i}{0}\eisp{a_i}{k}\subset\eisp{a_i}{k}$ holds for all $k\in\{0,1,\xi,\eta\}$ by the definition of the fusion rule $\mathcal{F}(\xi,\eta)$. 
\begin{proof}[Proof of Lemma \ref{lem2}]
If $a_i\in\eisp{q}{\pi}$ for all $i\in\mathbb{Z}$, then $q-\pi a_j\in \eisp{a_j}{0}$ and hence 
\begin{eqnarray*}
0
&=&(a_ja_{i+j})(q-\pi a_j)-a_j(a_{i+j}(q-\pi a_j))\\
&=&p_{i,j}q+\pi\eta(a_j+a_{i+j})-\pi(a_ia_{i+j})a_j-\pi a_j(a_{i+j}-a_ja_{i+j})=p_{i+j}q-\pi p_{i,j}.
\end{eqnarray*}
for all $i,j\in\mathbb{Z}$.
\end{proof}

Recall the element $p_1=p_{1,0}=a_0a_1-\eta (a_0+a_1)$.

\begin{lemma}\label{lem3}\sl
If $\dim M\geq3$, then $p_1\neq0$.
\end{lemma}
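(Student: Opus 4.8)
The natural approach is to prove the contrapositive: I will assume $p_1=0$ and deduce $\dim M\le 2$. Recall that $p_1=p_{1,0}=a_0a_1-\eta(a_0+a_1)$, so the assumption $p_1=0$ says precisely that $a_0a_1=\eta(a_0+a_1)$; in particular the product $a_0a_1$ lies in the subspace $U:=\mathbb{F}a_0+\mathbb{F}a_1$, which has dimension at most $2$.

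The key step is then to observe that $U$ is a subalgebra of $M$. Since $a_0$ and $a_1$ are axes, they are idempotents, so $a_0\cdot a_0=a_0\in U$ and $a_1\cdot a_1=a_1\in U$; together with $a_0\cdot a_1=\eta(a_0+a_1)\in U$ and bilinearity of the product, every product of two elements of $U$ again lies in $U$. Thus $U$ is closed under multiplication and contains the generating set $\{a_0,a_1\}$. Because $M$ is by hypothesis generated as a (commutative, nonassociative) algebra by $\{a_0,a_1\}$, it is the smallest subalgebra of $M$ containing $a_0$ and $a_1$; hence $M\subseteq U$ and therefore $M=U$. This gives $\dim M=\dim U\le 2$, contradicting $\dim M\ge 3$, and so $p_1\neq 0$ whenever $\dim M\ge 3$.

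There is no genuinely hard step here. The only points requiring attention are the routine check that $U$ is closed under multiplication and the observation that ``$M$ is generated by $\{a_0,a_1\}$'' means $M$ equals the subalgebra these two axes generate, so that the closure of $U$ forces $U=M$. I expect the mild subtlety, if any, to be exactly this last bookkeeping point; it is worth noting that the argument invokes nothing about the fusion rule $\mathcal{F}(\xi,\eta)$ beyond the idempotency of $a_0$ and $a_1$, so it is shorter and more transparent than any computation via the elements $x_i,y_i,z_i$ or the multiplication formula for $p_{i,j}$.
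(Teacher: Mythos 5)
Your proof is correct and is essentially the paper's own argument: both deduce from $p_1=0$ that $a_0a_1\in\mathbb{F}a_0+\mathbb{F}a_1$, note this span is then a subalgebra containing the generators, and conclude $M=\langle a_0,a_1\rangle_{alg}\subset\mathbb{F}a_0+\mathbb{F}a_1$, so $\dim M\leq2$. You merely spell out the closure-under-multiplication step that the paper leaves implicit.
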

\begin{proof}
If $p_1=0$, then $a_0a_1\in\mathbb{F}a_0+\mathbb{F}a_1$, which yields $M=\langle a_0,a_1\rangle_{alg}\subset\mathbb{F}a_0+\mathbb{F}a_1$ and hence $\dim M\leq2$.
\end{proof}

Recal the elemants $p_{2,0}=a_0a_2-\eta(a_0+a_2)$ and $p_{2,1}=a_{-1}a_{1}-\eta(a_{-1}+a_{1})$.

\begin{lemma}
If $\xi=2\eta$, then 
\begin{eqnarray}
\label{multia0p21`}
p_{2,0}\in\sum_{i=-1}^\mathbb{F}a_i+\mathbb{F}p_1-\frac{\eta}{2}(a_2+a_{-2}).
\end{eqnarray}

If $\xi\neq2\eta$, then
\begin{eqnarray}
\label{multia0p21}
a_0p_{2,1}\in&&\mathbb{F}a_0 \nonumber \\
&&+\frac{1}{\xi-2\eta}((2(1-2\xi)\lambda_1+2\xi^2-\xi\eta+2\eta^2-\frac{\xi}{2}-\eta)(2p_1+\eta(a_1+a_{-1}))\nonumber \\
&&+\eta(\xi-\eta)(2p_{2,0}+\eta(a_2+a_{-2})))
\end{eqnarray}
and 
\begin{eqnarray}
&p_{2,1}&-p_{2,0} \nonumber \\
&\in&\mathbb{F}(a_1-a_0)+\frac{\xi-4\eta}{4}(a_3-a_{-2}) \nonumber \\
&&-\frac{(2\xi^2-12\xi\eta-2\xi+8\eta)\lambda_1+5\xi\eta^2+\xi^2\eta+\xi\eta-6\eta^2}{4\eta(\xi-\eta)}(a_2-a_{-1}).
\end{eqnarray}

In particular,
\begin{eqnarray}
\label{p2linrel}
p_{2,1}-p_{2,0}\in\mathbb{F}(a_1-a_0)+\mathbb{F}(a_2-a_{-1})+\frac{\xi-4\eta}{4}(a_3-a_{-2})
\end{eqnarray}
in the both cases.
\end{lemma}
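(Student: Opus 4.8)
The plan is to derive all four assertions from one computation — that of $a_0p_{2,1}$, i.e.\ the product of $p_{2,1}$ with the axis lying \emph{between} the two axes that define it; note that (\ref{multiaipij}) only provides the products $a_jp_{i,j}$ with $a_j$ an \emph{endpoint} axis. The preliminary observation is that both $p_{2,0}$ and $p_{2,1}$ are fixed by $\tau_0$: for $p_{2,1}=a_{-1}a_1-\eta(a_{-1}+a_1)$ since $\tau_0$ interchanges $a_{-1}$ and $a_1$, and for $p_{2,0}=a_0a_2-\eta(a_0+a_2)$ since $\tau_0(p_{2,0})=a_0a_{-2}-\eta(a_0+a_{-2})=p_{2,0}$, using $a_0(a_2-a_{-2})=\eta(a_2-a_{-2})$ (as $a_2-a_{-2}=y_2$). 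Hence $p_{2,0},p_{2,1}\in\mathbb{F}a_0\oplus\eisp{a_0}{0}\oplus\eisp{a_0}{\xi}$, so $a_0p_{2,1}\in\mathbb{F}a_0\oplus\eisp{a_0}{\xi}$, and it suffices to locate its $\eisp{a_0}{\xi}$-component.

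For the computation itself I would write $a_0p_{2,1}=a_0(a_{-1}a_1)-\eta(a_0a_{-1}+a_0a_1)$ and use $a_0a_{\pm1}=p_1+\eta(a_0+a_{\pm1})$. To evaluate $a_0(a_{-1}a_1)$, decompose $a_{\pm1}$ into $\eisp{a_0}{\bullet}$-components: $a_1-a_{-1}=y_1$ lies in $\eisp{a_0}{\eta}$, while $a_1+a_{-1}$ has no $\eta$-component and its $0$- and $\xi$-components are carried by $z_1$ and $x_1$; the $a_{\pm2}$-terms that arise are similarly carried by $f_j$-translates of $z_2,x_2$. Running the fusion rule through $(a_1+a_{-1})^2$, $y_1^2=(1-2\eta)(a_1+a_{-1})-2p_{2,1}$, and the mixed products, and substituting (\ref{multiaipij}) for $a_{\pm1}p_1$, $a_0p_1$ and $a_0p_{2,0}$, collects everything into a linear relation in which the $\eisp{a_0}{\xi}$-component of $a_0p_{2,1}$ occurs with coefficient proportional to $\xi-2\eta$. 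For $\xi\neq2\eta$ one solves it to obtain the stated formula for $a_0p_{2,1}$; for $\xi=2\eta$ the relation degenerates into a linear dependence among $a_{-1},a_0,a_1,p_1$ and $a_2+a_{-2}$, which is assertion (1).

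Finally, applying $\theta$ to the formula for $a_0p_{2,1}$ yields $a_1p_{2,0}$; combining this with (\ref{multiaipij}) (which supplies $a_0p_{2,0}$) and with the formula for $a_0p_{2,1}$, and exploiting that $p_{2,1}-p_{2,0}$ is $\theta$-anti-invariant with a predictable $\eisp{a_0}{\bullet}$-decomposition, one solves for $p_{2,1}-p_{2,0}$ and reads off the displayed formula; then (\ref{p2linrel}) follows by discarding the explicit coefficient of $a_2-a_{-1}$, with $\tfrac{\xi-4\eta}{4}$ surviving as the coefficient of $a_3-a_{-2}$. In the case $\xi=2\eta$, subtracting assertion (1) from its $\theta$-image writes $p_{2,1}-p_{2,0}$ through $a_1-a_0$, $a_2-a_{-1}$ and $a_3-a_{-2}$ with the last coefficient equal to $-\tfrac{\eta}{2}=\tfrac{\xi-4\eta}{4}$, so (\ref{p2linrel}) holds there too. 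The main obstacle is the second step: it is a lengthy calculation, one must carry $\eisp{a_0}{\bullet}$-components faithfully through every product, and one must verify that the coefficient of $a_0p_{2,1}$ does not cancel, so that the denominator $\xi-2\eta$ — and with it the case split — genuinely appears. Once that is done the remaining steps are bookkeeping.
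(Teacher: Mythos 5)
Your treatment of the first two assertions is essentially the paper's argument: the paper forms $x_1x_1-z_1z_1-\varphi_0(x_1x_1)a_0\in\eisp{a_0}{0}$, multiplies by $a_0$ to get $0$, and observes that $p_{2,1}$ enters this expression (through $(a_1+a_{-1})^2$) with coefficient $\tfrac{\xi(2\eta-\xi)}{2}$, a nonzero multiple of $\xi-2\eta$; your reorganization via the $\eisp{a_0}{\bullet}$-components of $a_{\pm1}$ amounts to the same computation and correctly locates the source of the case split. Your derivation of (\ref{p2linrel}) in the case $\xi=2\eta$ (subtract assertion (1) from its $\theta$-image; the unknown coefficient of $p_1$ cancels) is also sound.

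The gap is in your derivation of the third display. You propose to recover $u=p_{2,1}-p_{2,0}$ from the products $a_0p_{2,1}$, $a_0p_{2,0}$, $a_1p_{2,0}=\theta(a_0p_{2,1})$ and $a_1p_{2,1}$ together with the $\theta$-anti-invariance of $u$. This linear data cannot determine $u$: since $u$ is $\tau_0$-invariant, $u\in\mathbb{F}a_0\oplus\eisp{a_0}{0}\oplus\eisp{a_0}{\xi}$, and $a_0u$ sees only the $\mathbb{F}a_0$- and $\eisp{a_0}{\xi}$-components, the $\eisp{a_0}{0}$-component being annihilated. The same happens for $a_1u$ with respect to $a_1$, and $\eisp{a_0}{0}\cap\eisp{a_1}{0}$ need not vanish, so no amount of multiplying by axes pins $u$ down; in particular nothing in your outline shows that $u$ lies in $\saxes$ at all, which is the substantive content of the third display. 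The paper closes this with a genuinely quadratic input: from $p_{1,0}p_{1,0}=\frac{1}{\xi}a_0(z_1x_1-z_1z_1)-(z_1x_1-p_1p_1)$, whose right-hand side becomes computable once $a_0p_{2,1}$ is known from the second assertion, it obtains an explicit expression for $p_1^2$ in terms of $p_1$, $p_{2,0}$, $p_{2,1}$ and the axes, and then imposes $\theta(p_1^2)=p_1^2$. It is this flip-invariance of $p_1^2$ --- absent from your plan --- that forces the relation among $p_{2,1}-p_{2,0}$, $a_1-a_0$, $a_2-a_{-1}$ and $a_3-a_{-2}$, including the coefficient $\frac{\xi-4\eta}{4}$. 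You would need to add this (or an equivalent quadratic identity) to make the third and fourth assertions go through when $\xi\neq2\eta$.
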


\begin{proof}
By the fusion rule, $x_1x_1-z_1z_1-\varphi_0(x_1x_1)a_0)\in\eisp{a_0}{0}$.

When $\xi=2\eta$ the coefficient of $p_{2,1}$ of $x_1x_1-z_1z_1-\varphi_0(x_1x_1)a_0)$ is 0.
Thus we can compute $a_0(x_1x_1-z_1z_1-\varphi_0(x_1x_1)a_0)$ and then  
\begin{eqnarray*}
p_{2,0}\in\sum_{i=-1}^\mathbb{F}a_i+\mathbb{F}p_1-\frac{\eta}{2}(a_2+a_{-2}).
\end{eqnarray*}
holds since the multiplication is 0.

When $\xi\neq2\eta$, then
\begin{eqnarray}
\label{multia0p21}
a_0p_{2,1}\in&&\mathbb{F}a_0 \nonumber \\
&&+\frac{1}{\xi-2\eta}((2(1-2\xi)\lambda_1+2\xi^2-\xi\eta+2\eta^2-\frac{\xi}{2}-\eta)(2p_1+\eta(a_1+a_{-1}))\nonumber \\
&&+\eta(\xi-\eta)(2p_{2,0}+\eta(a_2+a_{-2})))
\end{eqnarray}
since $a_0(x_1x_1-z_1z_1-\varphi_0(x_1x_1)a_0)=0$.

By the fusion rule, $p_{i,0}p_{j,0}=\frac{1}{\xi}a_0(z_ix_j-z_iz_j)-(z_ix_j-p_{i,0}p_{j,0})$.
By the flip-invariance of $p_1p_1$,
\begin{eqnarray}
&p_{2,1}&-p_{2,0} \nonumber \\
&\in&\mathbb{F}(a_1-a_0)+\frac{\xi-4\eta}{4}(a_3-a_{-2}) \nonumber \\
&&-\frac{(2\xi^2-12\xi\eta-2\xi+8\eta)\lambda_1+5\xi\eta^2+\xi^2\eta+\xi\eta-6\eta^2}{4\eta(\xi-\eta)}(a_2-a_{-1})
\end{eqnarray}
if $\xi\neq2\eta$.
\end{proof}

By the simular argument as the proof above, we can decide $a_0(p_{i.j}+p_{i,-j})$ for any $i,j\in\mathbb{Z}$.

\section{List of axial algebra of Majorana type admitting flip}

\subsection{The primitive axial algebras of Jordan type}
An axial algebra $M$ of Majorana type $(\xi,\eta)$ admitting a flip with $\eisp{a_0}{\xi}=0$ or $\eisp{a_0}{\eta}=0$ is a 2-generated primitive axial algebras of Jordan type $\eta$ or $\xi$, respectively (\cite{hrs15}).
The following lemma is given in \cite{hrs15}, Proposition 4.6, Proposition 4.7 and Proposition 4.8.
\begin{lemma}[J.I.\ Hall et al.]
\sl
\label{lemhall}
If $M$ is a primitive axial algebra of Jordan type $\eta$ generated by two axes $a_0,a_1$, then the following three properties hold:
\begin{itemize}
\item[\rm{(1)}]If $M$ is not isomorphic to the associative algebra $\mathbb{F}$ or $\mathbb{F}\oplus\mathbb{F}$, then $\eta=\frac{1}{2}$ or $\lambda_1=\frac{\eta}{2}$.
\item[\rm{(2)}]If $\dim M=2$ and $M$ is not isomorphic to $\mathbb{F}\oplus\mathbb{F}$, then $\eta=-1$ or $\frac{1}{2}$ and $a_0a_1=\eta(a_0+a_1)$.
\item[\rm{(3)}] $p_1w=((1-\eta)\lambda_1-\eta)w$ for all $w\in M$.
\end{itemize}
\end{lemma}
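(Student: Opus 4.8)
The statement is attributed to Hall--Rehren--Shpectorov, so the plan is to reconstruct the essential computation rather than invent something new. The starting point is that a primitive axial algebra of Jordan type $\eta$ generated by $a_0, a_1$ has $M = \eisp{a_0}{1}\oplus\eisp{a_0}{0}\oplus\eisp{a_0}{\eta}$ with the fusion rule forcing $\eisp{a_0}{0}\oplus\eisp{a_0}{1}$ to be a subalgebra and $\eisp{a_0}{\eta}\cdot\eisp{a_0}{\eta}\subset\eisp{a_0}{0}\oplus\eisp{a_0}{1}$. First I would expand $a_1$ in the $a_0$-eigenspaces as $a_1 = \lambda_1 a_0 + v_0 + v_\eta$ with $v_0\in\eisp{a_0}{0}$, $v_\eta\in\eisp{a_0}{\eta}$; using $a_0 a_1 = \lambda_1 a_0 + \eta v_\eta$ and the definition $p_1 = a_0a_1 - \eta(a_0+a_1)$, one gets $p_1 = (\lambda_1-\eta)a_0 - \eta v_0$, i.e.\ $p_1$ lies in the subalgebra $\eisp{a_0}{0}\oplus\eisp{a_0}{1}$.

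The key step for part (3) is to show $p_1$ acts as the claimed scalar. Because $p_1$ is $G$-invariant (noted in the excerpt), it suffices by the dihedral action to check $p_1 w = ((1-\eta)\lambda_1-\eta)w$ on the generators $a_0$ and $a_1$, or equivalently on $a_0$ and on a spanning set of $\eisp{a_0}{\eta}$. Here I would use primitivity ($\eisp{a_0}{1}=\mathbb{F}a_0$) together with the Jordan fusion rule to pin down $a_1 v_\eta$ and $v_0 v_\eta$ in terms of $v_\eta$ and elements of $\eisp{a_0}{1}$; the Seress condition ($a_0(wz)=(a_0w)z$ for $z\in\eisp{a_0}{0}$) is what makes these computations close up. Symmetrizing under $\theta$ (which swaps $a_0\leftrightarrow a_1$ and fixes $p_1$) turns the one-sided relation $p_1 a_0 = \cdots$ into the two-sided eigenvalue statement. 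Then parts (1) and (2) follow by consistency: compute $p_1 p_1$ two ways --- once using part (3) (so $p_1 p_1 = ((1-\eta)\lambda_1-\eta)p_1$) and once by squaring the explicit expression $p_1=(\lambda_1-\eta)a_0-\eta v_0$ inside the associative (commutative, idempotent-generated) subalgebra $\eisp{a_0}{0}\oplus\eisp{a_0}{1}$ --- and likewise impose that $a_1$ is an idempotent whose $a_0$-eigenvalue data is symmetric. Matching coefficients yields a polynomial constraint that factors, forcing either $\eta=\tfrac12$ or $\lambda_1=\tfrac{\eta}{2}$ (part (1)); and in the $\dim M = 2$ case $v_\eta=0$ forces $v_0\in\mathbb{F}a_0$, collapsing $M$ to a span of two idempotents, whence the classification of $2$-dimensional such algebras (associative ones, or $\eta\in\{-1,\tfrac12\}$ with $a_0a_1=\eta(a_0+a_1)$) gives part (2).

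I expect the main obstacle to be part (1): getting from the $p_1p_1$ identity to the clean dichotomy $\eta=\tfrac12$ or $\lambda_1=\tfrac{\eta}{2}$ requires carefully tracking that the component $v_\eta$ is genuinely nonzero when $M\not\cong\mathbb{F},\mathbb{F}\oplus\mathbb{F}$ (otherwise one is in the degenerate $2$-dimensional situation) and then extracting the right factor from a quadratic relation without dividing by something that could vanish in the relevant characteristics. The bookkeeping of which eigenspace each product lands in, under the sparse Jordan fusion table, is routine but must be done without slips; everything else is forced. Since this is quoted verbatim from \cite{hrs15}, I would in the actual paper simply cite Propositions~4.6--4.8 there and omit the recomputation, but the above is how one would verify it independently.
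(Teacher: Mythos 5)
The paper does not prove this lemma at all: it is imported verbatim from \cite{hrs15} (the sentence immediately preceding the statement attributes it to Propositions 4.6, 4.7 and 4.8 there), so your closing remark --- cite and omit the recomputation --- is exactly what the paper does, and there is no internal proof to measure your reconstruction against.

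As a reconstruction of the cited argument, your outline is broadly the right shape (decompose $a_1$ in the $a_0$-eigenspaces, note $p_1\in\eisp{a_0}{1}\oplus\eisp{a_0}{0}$, use the fusion rule and the Seress condition to obtain the scalar action, then derive (1) and (2) by consistency), but two points need repair before it would close up. First, a coefficient slip: from $a_1=\lambda_1a_0+v_0+v_\eta$ and $a_0a_1=\lambda_1a_0+\eta v_\eta$ one gets $p_1=((1-\eta)\lambda_1-\eta)a_0-\eta v_0$, not $(\lambda_1-\eta)a_0-\eta v_0$; the correct coefficient is what makes $p_1a_0=((1-\eta)\lambda_1-\eta)a_0$ immediate and is the seed of part (3). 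Second, you symmetrize under the flip $\theta$, but the lemma is stated for an arbitrary $2$-generated primitive axial algebra of Jordan type, where the existence of a flip is a consequence of the classification in \cite{hrs15}, not a hypothesis; what your symmetrization actually requires is the equality $\varphi_0(a_1)=\varphi_1(a_0)$, so that the scalar computed on the $a_0$-side agrees with the one on the $a_1$-side, and that symmetry is itself a nontrivial step of \cite{hrs15} that must be supplied. Finally, in part (2) the implication ``$v_\eta=0$ forces $v_0\in\mathbb{F}a_0$'' is not right as written: $\dim M=2$ forces exactly one of $v_0$, $v_\eta$ to vanish, and the two cases ($v_\eta=0$ yielding the associative algebras, $v_0=0$ yielding $\eta\in\{-1,\frac{1}{2}\}$ with $a_0a_1=\eta(a_0+a_1)$) have to be run separately. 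None of this affects the paper, which correctly treats the lemma as an external input.
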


\subsection{Algebra $\threealg{\xi}{\eta}{\alpha}$ and its quotients}

For $\xi,\eta,\alpha\in\mathbb{F}$ with $2\xi\neq1$, let $\threealg{\xi}{\eta}{\alpha}$ be the $\mathbb{F}$-space $\mathbb{F}\hat{q}\oplus\bigoplus_{i=-1}^1\mathbb{F}\hat{a}_i$ with the multiplication given by
\begin{itemize}
\item$\hat{a}_i^2=\hat{a}_i$ for all $i$.
\item$\hat{a}_i\hat{a}_{i+1}=\hat{q}+\frac{(\xi-\eta)}{2}((\alpha+1)\hat{a}_0+\hat{a}_1+\hat{a}_{-1})+\eta(\hat{a}_i+\hat{a}_{i+1})$ for $i=0,-1$.
\item$\hat{a}_{-1}\hat{a}_1=(1-\alpha)(\hat{q}+\frac{(\xi-\eta)}{2}((\alpha+1)\hat{a}_0+\hat{a}_1+\hat{a}_{-1}))+\eta(\hat{a}_1+\hat{a}_{-1})$. 
\item$\hat{q}w=\frac{-\xi(\xi+1)(\xi-\eta)\alpha-\xi(3\xi^2+3\xi\eta-\eta-1)}{4(2\xi-1)}w$ for all $w\in\threealg{\xi}{\eta}{\alpha}$.
\end{itemize}
The parameters such that $\threealg{\xi}{\eta}{\alpha}$ becomes an axial algebra of Majorana type $(\xi,\eta)$ and admits a flip are as follows:
\begin{itemize}
\item[(i)]$\alpha=0$ and $0$, $1$, $\xi$ and $\eta$ are distinct.
\item[(ii)]$\eta=\frac{1}{2}$ and $\xi\notin\{0,1,\frac{1}{2}\}$.
\end{itemize}

The quotients of these algebras not of Jordan type are given by:
\begin{itemize}
\item[(i)]$\threealg{\xi}{\frac{1-3\xi^2}{3\xi-1}}{0}/\mathbb{F}\hat{q}$, where $3\xi\neq\pm1$ and $3\xi^2\neq1$.
\item[(ii)]$\threealg{-1}{\frac{1}{2}}{\alpha}/\mathbb{F}\hat{q}$, where $\ch\mathbb{F}\neq3$.
\item[(iii)]$\threealg{\xi}{\frac{1}{2}}{-3}/\mathbb{F}\hat{q}$.
\end{itemize}
We denote these quotients by $\threealg{\xi}{\frac{1-3\xi^2}{3\xi-1}}{0}^{\times}$,  $\threealg{-1}{\frac{1}{2}}{\alpha}^{\times}$ and  $\threealg{\xi}{\frac{1}{2}}{-3}^{\times}$, respectively.

\subsection{Algebra $\fourone{\xi}{\eta}$ and its quotients}
For $\xi,\eta\in\mathbb{F}$, let $\fourone{\xi}{\eta}$ be the $\mathbb{F}$-space $\mathbb{F}\hat{q}\oplus\bigoplus_{i=-1}^2\mathbb{F}\hat{a}_i$ with the multiplication given by
\begin{itemize}
\item$\hat{a}_i\hat{a}_i=\hat{a}_i$ for all $i$.
\item$\hat{a}_i\hat{a}_{j}=\hat{q}+\frac{\xi-\eta}{2}(\hat{a}_0+\hat{a}_1+\hat{a}_{-1}+\hat{a}_2)+\eta(\hat{a}_i+\hat{a}_{j})$ if $|i-j|=1$ or $3$.
\item$\hat{a}_i\hat{a}_{i+2}=0$ for $i=-1,0$
\item$\hat{q}w=\frac{-2\xi\eta-\xi+\eta}{2}w$ for all $w\in\fourone{\xi}{\eta}$.
\end{itemize}

The parameters such that $\fourone{\xi}{\eta}$ is an axial algebra of Majorana type $(\xi,\eta)$ and admits a flip are listed as follows:
\begin{itemize}
\item[(i)]$\xi=\frac{1}{4}$, $\chf{F}\neq3$ and $\eta\notin\{0,1,\frac{1}{4}\}$
\item[(ii)]$\eta=\frac{\xi}{2}$ and $\xi\notin\{0,1,2\}$.
\end{itemize}

The quotients of these algebras with axial dimension 4 are as follows:
\begin{itemize}
\item[(i)] $\fourone{\frac{1}{4}}{\frac{1}{2}}/\mathbb{F}\hat{q}$ where $\chf{F}\neq3$.
\item[(ii)]$\fourone{-\frac{1}{2}}{-\frac{1}{4}}/\mathbb{F}\hat{q}$ where $\chf{F}\neq3$ nor $5$.
\end{itemize}
We denote these quotients by $\fourone{\frac{1}{4}}{\frac{1}{2}}^{\times}$ and $\fourone{-\frac{1}{2}}{-\frac{1}{4}}^{\times}$, respectively.

\subsection{Algebra $\fourtwo{\xi}{\eta}{\mu}$ and its quotient}
For $\xi,\eta,\mu\in\mathbb{F}$ such that $\mu\neq0$, let $\fourtwo{\xi}{\eta}{\mu}$ be the $\mathbb{F}$-space $\mathbb{F}\hat{p}_1\oplus\bigoplus_{i=-1}^2\mathbb{F}\hat{a}_i$ with multiplication given by
\begin{itemize} 
\item$\hat{p}_1\hat{p}_1=(2\eta^2-\eta\xi-\frac{1}{2}\eta+\frac{\xi-2\xi^2}{2\mu})\hat{p}_1+(\frac{\eta^3\mu}{2}+\eta^3-\frac{\xi\eta^2}{2}+\frac{\xi\eta-2\xi^2\eta}{4\mu})(\hat{a}_0+\hat{a}_1+\hat{a}_{-1}+\hat{a}_2)$.
\item$\hat{p}_1\hat{a}_i=\frac{\xi-\eta}{2}(2\hat{p}_1+\eta(\hat{a}_{i-1}+\hat{a}_{i+1}))+(-\mu\eta^2+\frac{\eta(2\xi-2\eta-1)}{2})\hat{a}_0$ for all $i$, where $\hat{a}_{-2}=\hat{a}_2$ and $\hat{a}_3=\hat{a}_{-1}$.
\item$\hat{a}_i\hat{a}_i=\hat{a}_i$ for all $i=-1,0,1,2$.
\item$\hat{a}_i\hat{a}_{j}=\hat{p}_1+\eta(\hat{a}_i+\hat{a}_{j})$ if $|i-j|=1$ or $3$.
\item$\hat{a}_i\hat{a}_{i+2}=r_{2,i}+\eta(\hat{a}_i+\hat{a}_{i+2})$ for $i=-1,0$, where $r_{2,i}=-\mu(2\hat{p}_1+\eta(\hat{a}_{i+1}+\hat{a}_{i-1}))-\eta(\hat{a}_i+\hat{a}_{i+2})$ and $\hat{a}_{3}=\hat{a}_{-1}$.
\end{itemize}
The parameters such that $\fourtwo{\xi}{\eta}{\mu}$ is an axial algebra of Majorana type $(\xi,\eta)$ and admits a flip are as follows:
\begin{itemize}
\item[(i)]$(\xi,\mu)=(\frac{1}{2},\frac{1-4\eta}{2\eta})$ and $\eta\notin\{0,1,\frac{1}{2}\}$.
\item[(ii)] $(\eta,\mu)=(\frac{\xi^2}{2},\frac{1}{\xi})$ and $\xi\notin\{0,1,2,\pm\sqrt{2}\}$,
\item[(iii)] $(\eta,\mu)=(\frac{1-\xi^2}{2},\frac{-1}{\xi+1})$ and  $\xi\notin\{0,\pm1,\pm\sqrt{-1},-1\pm\sqrt{2}\}$.
\end{itemize}

The only quotient of these algebras with axial dimension 4 is $\fourtwo{-1}{\frac{1}{2}}{-1}/\mathbb{F}(\hat{p}_1+\frac{3}{8}(\hat{a}_0+\hat{a}_1+\hat{a}_{-1}+\hat{a}_2))$ where $\chf{F}\neq3$.
We denote this quotient by $\fourtwo{-1}{\frac{1}{2}}{-1}^{\times}$.

\subsection{Algebra $\fourthree$ and its quotient}
Let $\fourthree$ be the $\mathbb{F}$-space $\mathbb{F}\hat{q}\oplus\bigoplus_{i=-1}^2\mathbb{F}\hat{a}_i$ with multiplication given by
\begin{itemize}
\item For all $w\in\fourthree$, $\hat{q}w=0$,
\item For al $i$, $\hat{a}_i^2=\hat{a}_i$
\item $\hat{a}_i\hat{a}_{i+1}=\hat{q}-\frac{1}{2}(2\hat{a}_0+2\hat{a}_1+\hat{a}_{-1}+\hat{a}_2)+2(\hat{a}_i+\hat{a}_{i+1})$ for all $i$ where $\hat{a}_i=\hat{a}_{i+4}+\hat{a}_{i+3}-\hat{a}_{i+1}$.
\item $\hat{a}_i\hat{a}_{i+2}=\hat{a}_i-\hat{a}_{i+1}+\hat{a}_{i-1}$ for all $i$ where $\hat{a}_i=\hat{a}_{i+4}+\hat{a}_{i+3}-\hat{a}_{i+1}$.
\end{itemize}
This is an axial algebra of Majorana type $(\frac{1}{2},2)$ generated by $\{\hat{a}_0,\hat{a}_1\}$ with $\lambda_1=\lambda_2=1$ and admits a flip. 
The only quotient of $\fourthree$ which is not given yet is $\fourthree/\mathbb{F}\hat{q}$.
We denote it by $\fourthree^{\times}$.

\subsection{Algebras $\fiveone{\xi}$, $\fivetwo{\xi}$ and their quotients}
For $\xi\in\mathbb{F}\setminus\{0,1,\frac{-1}{3},\frac{1}{5},\frac{9}{5}\}$, let $\fiveone{\xi}$ be the $\mathbb{F}$-space $\mathbb{F}\hat{p}_1\oplus\bigoplus_{i=-2}^2\mathbb{F}\hat{a}_i$ with multiplication given by
\begin{itemize}
\item$\hat{p}_1\hat{p}_1=\frac{-5(3\xi+1)(5\xi-1)}{128}\hat{p}_1-\frac{(7\xi-3)(3\xi+1)(5\xi-1)}{2048}(\hat{a}_2+\hat{a}_{-2}+\hat{a}_1+\hat{a}_{-1}+\hat{a}_0)$.
\item$\hat{p}_1\hat{a}_i=\frac{3\xi+1}{8}\hat{p}_1+\frac{(5\xi-1)(9\xi-25)}{256}\hat{a}_i+\frac{(3\xi+1)(5\xi-1)}{128}(\hat{a}_{i+1}+\hat{a}_{i-1})$ for all $i$, where $\hat{a}_3=\hat{a}_{-2}$ and $\hat{a}_{-3}=\hat{a}_2$.
\item$\hat{a}_i\hat{a}_i=\hat{a}_i$ for all $i$.
\item$\hat{a}_i\hat{a}_j=\hat{p}_1+\frac{5\xi-1}{8}(\hat{a}_i+\hat{a}_j)$ if $|i-j|=1$ or $4$.
\item$\hat{a}_i\hat{a}_j=-\hat{p}_1-\frac{5\xi-1}{16}(\hat{a}_2+\hat{a}_{-2}+\hat{a}_1+\hat{a}_{-1}+\hat{a}_0)+\frac{5\xi-1}{8}(\hat{a}_i+\hat{a}_j)$ if $|i-j|=2$ or $3$.
\end{itemize}
This is an axial algebra of Majorana type $(\xi,\eta)$ and admits a flip.

For $\xi\in\mathbb{F}\setminus\{0,1,\frac{1}{2}\}$, let $\fivetwo{\xi}$ be the $\mathbb{F}$-space $\mathbb{F}\hat{p}_1\oplus\bigoplus_{i=-2}^2\mathbb{F}\hat{a}_i$ with multiplication given by
\begin{itemize}
\item$\hat{p}_1\hat{p}_1=\frac{(2\xi-1)(2\xi-3)}{32}(\hat{a}_2+\hat{a}_{-2}-4(\hat{a}_1+\hat{a}_{-1})+6\hat{a}_0),$
\item$\hat{p}_1\hat{a}_i=\frac{2\xi-1}{8}(4\hat{p}_1-2\hat{a}_i+\hat{a}_{i+1}+\hat{a}_{i-1})$ for all $i$, where $\hat{a}_{i+5}=\hat{a}_{i}+5(\hat{a}_{i+4}-\hat{a}_{i+1})-10(\hat{a}_{i+3}-\hat{a}_{i+2})$.
\item$\hat{a}_i\hat{a}_i=\hat{a}_i$ for all $i$
\item$\hat{a}_i\hat{a}_{i+1}=\hat{p}_1+\eta(\hat{a}_i+\hat{a}_{i+1})$ for all $i$ where $\hat{a}_{i+5}=\hat{a}_{i}+5(\hat{a}_{i+4}-\hat{a}_{i+1})-10(\hat{a}_{i+3}-\hat{a}_{i+2})$.
\item$\hat{a}_i\hat{a}_{i+2}=4\hat{p}_1-\frac{1}{4}(\hat{a}_2+\hat{a}_{-2}-4(\hat{a}_1+\hat{a}_{-1})+6\hat{a}_0)+\eta(\hat{a}_i+\hat{a}_{i+2})$ for all $i$, where $\hat{a}_{i+5}=\hat{a}_{i}+5(\hat{a}_{i+4}-\hat{a}_{i+1})-10(\hat{a}_{i+3}-\hat{a}_{i+2})$.
\end{itemize}
This is a axial algebras of Majorana type $(\xi,\eta)$ and admits a flip. 

The quotient of these algebras not given yet is $\fivetwo{\xi}/\mathbb{F}(\hat{a}_2+\hat{a}_{-2}-4(\hat{a}_1+\hat{a}_{-1})+6\hat{a}_0)$.
We denote this quotient by $\fivetwo{\xi}^{\times}$.

\subsection{Algebras $\sixone{\xi}$, $\sixtwo{\xi}$ and their quotients}
For $\xi\in\mathbb{F}\setminus\{0,1,2\}$, let $\sixone{\xi}$ be the $\mathbb{F}$-space $\mathbb{F}\hat{q}\oplus\mathbb{F}\hat{p}_1\oplus\bigoplus_{i=-2}^3\mathbb{F}\hat{a}_i$ with multiplication given by
\begin{itemize} 
\item$\hat{p}_1\hat{p}_1=\frac{\xi^2}{16}(\hat{q}-2\hat{p}_1+\frac{\xi}{2}(\hat{a}_3+\hat{a}_2+\hat{a}_{-2}+\hat{a}_1+\hat{a}_{-1}+\hat{a}_0))-\frac{\xi^2+2\xi}{8}\hat{p}_1$.
\item$\hat{q}w=-\frac{7\xi^2+2\xi}{4}w$ for all $w\in\sixone{\xi}$.
\item$\hat{p}_1\hat{a}_i=\frac{\xi}{2}\hat{p}_1-\frac{\xi^2}{4}\hat{a}_i+\frac{\xi^2}{8}(\hat{a}_{i+1}+\hat{a}_{i-1})$
\item$\hat{a}_i\hat{a}_i=\hat{a}_i$ for all $i$
\item$\hat{a}_i\hat{a}_j=\hat{p}_1+\frac{\xi}{2}(\hat{a}_i+\hat{a}_j)$ if $|i-j|=1$ or $5$.
\item$\hat{a}_i\hat{a}_j=-\frac{\xi}{4}(\hat{a}_2+\hat{a}_{-2}+\hat{a}_0)+\frac{\xi}{2}(\hat{a}_i+\hat{a}_j)$ if $|i-j|=2$ or $4$ and $i$ is even.
\item$\hat{a}_i\hat{a}_j=-\frac{\xi}{4}(\hat{a}_3+\hat{a}_{-1}+\hat{a}_1)+\frac{\xi}{2}(\hat{a}_i+\hat{a}_j)$ if $|i-j|=2$ or $4$ and $i$ is odd.
\item$\hat{a}_i\hat{a}_j=\hat{q}-2\hat{p}_1+\frac{\xi}{2}(\hat{a}_3+\hat{a}_2+\hat{a}_{-2}+\hat{a}_1+\hat{a}_{-1}+\hat{a}_0)
+\frac{\xi}{2}(\hat{a}_i+\hat{a}_j)$ if $|i-j|=3$.
\end{itemize}
This is an axial algebras of Majorana type $(\xi,\frac{\xi}{2})$ and admits a flip.

For $\xi\in\mathbb{F}\setminus\{0,1,\frac{4}{9},\frac{2}{5},-4\pm2\sqrt{5}\}$, let $\sixtwo{\xi}$ be the $\mathbb{F}$-space $\mathbb{F}\hat{q}\oplus\mathbb{F}\hat{p}_1\oplus\bigoplus_{i=-2}^3\mathbb{F}\hat{a}_i$ with multiplication given by
\begin{itemize}
\item Set
\begin{eqnarray*}
r_{2,i}&=&\hat{q}-\frac{(3\xi-2)(5\xi-2)^2}{\xi^2(9\xi-4)}\hat{p}_1+\frac{(3\xi-2)(5\xi-2)}{8(2\xi-1)}(\hat{a}_{i+3}+\hat{a}_{i+1}+\hat{a}_{i-1})\\
&&+\frac{21\xi^2-18\xi+4}{8(2\xi-1)}(r_{i+2}+r_{i-2}+\hat{p}_i).
\end{eqnarray*}
Then $\hat{a}_i\hat{a}_j=r_{2,i}+\eta(\hat{a}_i+\hat{a}_j)$ if $|i-j|=2$ or $4$ and
\begin{eqnarray*}
\hat{p}_1\hat{p}_1&=&\frac{\xi^2(39\xi^2-22\xi+2)}{16(2\xi-1)^2}\hat{p}_1-\frac{\xi^4(9\xi-4)}{32(2\xi-1)^2(5\xi-2)}r_{2,0}\\
&&-\frac{\xi^4(3\xi-1)(9\xi-4)}{128(2\xi-1)^3}(\hat{a}_3+\hat{a}_1+\hat{a}_{-1})\\
&&-\frac{\xi^4(3\xi-1)(9\xi-4)}{64(2\xi-1)^2(5\xi-2)}(\hat{a}_2+\hat{a}_{-2}+\hat{a}_0).
\end{eqnarray*}
\item
$\hat{q}w=-\frac{(3\xi-2)(5\xi-2)(12\xi^2-\xi-2)}{8(2\xi-1)(9\xi-4)}w$ for all $w\in\sixtwo{\xi}$,
\item$\hat{p}_1\hat{a}_i=(\xi-\eta)\hat{p}_1+((1-\xi)\lambda_1+\eta(\xi-\eta-1))\hat{a}_i+\frac{\eta(\xi-\eta)}{2}(\hat{a}_{i+1}+\hat{a}_{i-1})$ for all $i$ where $\hat{a}_4=\hat{a}_{-2}$, $\hat{a}_{-3}=\hat{a}_3$ and $\lambda_1=\frac{-\xi^2(3\xi-2)}{16(2\xi-1)^2}$.
\item$\hat{a}_i\hat{a}_i=\hat{a}_i$ for all $i$.
\item$\hat{a}_i\hat{a}_j=\hat{p}_1+\eta(\hat{a}_i+\hat{a}_j)$ if $|i-j|=1$ or $5$.
\item$\hat{a}_i\hat{a}_j=r_{3,i}+\eta(\hat{a}_i+\hat{a}_j)$ if $|i-j|=3$, where
\begin{eqnarray*}
r_{3,i}&=&\frac{2(2\xi-1)}{\xi}(2\hat{p}_1-\frac{\xi^2}{4(2\xi-1)}(\hat{a}_{i+1}+\hat{a}_{i-1}))\\
&&-\frac{2(2\xi-1)}{5\xi-2}(2r_{2,i}-\frac{\xi^2}{4(2\xi-1)}(\hat{a}_{i+2}+\hat{a}_{i-2}))\\
&&+\frac{\xi^2}{4(2\xi-1)}\hat{a}_{i+3}+\frac{\xi(29\xi^2-22\xi+4)}{4(2\xi-1)(5\xi-2)}\hat{a}_i,
\end{eqnarray*}
and $\hat{a}_{i+6}=\hat{a}_i$.
\end{itemize}
This algebra is an axial algebras of Majorana type $(\xi,\eta)$ and admits a flip.

The quotients of these algebras not given yet are as follows:
\begin{itemize}
\item[(i)]$\rmsix_1(\frac{-2}{7},\frac{-1}{7})/\mathbb{F}\hat{q}$ where $\chf{F}\neq7$.
\item[(ii)]$\rmsix_2(\frac{2}{3},\frac{-1}{3})/\mathbb{F}\hat{q}$ where $\chf{F}\neq3$.
\item[(iii)]$\rmsix_2(\frac{1\pm\sqrt{97}}{24},\frac{53\pm5\sqrt{97}}{192})/\mathbb{F}\hat{q}$ where $\mathbb{F}\ni\sqrt{97}$ and $\chf{F}\neq3$ nor $11$
\item[(iv)]$\rmsix_2(2,7)/\mathbb{F}\hat{q}$ where $\chf{F}=11$.
\end{itemize}
We denote these algebras by $\rmsix_1(\frac{-2}{7},\frac{-1}{7})^{\times}$, $\rmsix_2(\frac{2}{3},\frac{-1}{3})^{\times}$, $\rmsix_2(\frac{1\pm\sqrt{97}}{24},\frac{53\pm5\sqrt{97}}{192})^{\times}$ and $\rmsix_2(2,7)^{\times}$, respectively.

\subsection{Algebra $\infalg$}
Let $\infalg$ be the $\mathbb{F}$-space $\bigoplus_{i\in\mathbb{Z}_{>0}}\mathbb{F}\hat{p}_i\oplus\bigoplus_{i\in\mathbb{Z}}\mathbb{F}\hat{a}_i$ with multiplication given by:
\begin{itemize}
\item$\hat{p}_i\hat{p}_j=\frac{3}{4}(\hat{p}_i+\hat{p}_j)-\frac{3}{8}(\hat{p}_{i+j}+\hat{p}_{|i-j|})$ for all $i$ and $j$.
\item$\hat{a}_i\hat{a}_j=\hat{p}_{|i-j|}+\frac{1}{2}(\hat{a}_i+\hat{a}_j)$ for all $i$ and $j$, where $p_0=0$.
\item$\hat{a}_i\hat{p}_j=\frac{3}{2}\hat{p}_j-\frac{3}{4}\hat{a}_i+\frac{3}{8}(\hat{a}_{i-j}+\hat{a}_{i+j})$.
\end{itemize}

\section{Main result}
Our main result is the following theorem.
\begin{theorem}
\sl
An axial algebra of Majorana type $(\xi,\eta)$ admitting a flip over a field of characteristic neither $2$ nor $5$ is isomorphic to a primitive axial algebra of Jordan type, a quotient of $\infalg$, or one of the algebras listed in Table \ref{tabalg}.
\end{theorem}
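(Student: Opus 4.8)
The plan is to translate the fusion rule $\mathcal{F}(\xi,\eta)$ together with the flip $\theta$ into a finite system of polynomial constraints on $\xi$, $\eta$ and the scalars $\lambda_i=\varphi_0(a_i)$, and to solve this system by an induction that grows the configuration of axes one step at a time, controlling $a_0,a_1,a_2,a_3,\dots$ successively.

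First I would dispose of the degenerate cases. If $\eisp{a_0}{\xi}=0$ or $\eisp{a_0}{\eta}=0$, then $M$ is a $2$-generated primitive axial algebra of Jordan type $\eta$ or $\xi$, and the theorem follows from Lemma \ref{lemhall} (i.e.\ from \cite{hrs15}); this also covers $\dim M\le 2$. So I may assume $\eisp{a_0}{\xi}\ne 0\ne\eisp{a_0}{\eta}$ and $\dim M\ge 3$, whence $p_1\ne 0$ by Lemma \ref{lem3}. By Lemma \ref{lem1} the set $\{a_i\mid 1\le i\le\adim\}$ is a basis of $\saxes$, and since $M$ is spanned over $\saxes$ by the ``$\eta$-type'' vectors $p_{i,j}$ and the ``$\xi$-type'' vectors $z_i$, the multiplication formula (\ref{multiaipij}) and its analogues show that every structure constant of $M$ is a rational function of $\xi,\eta$ and finitely many $\lambda_k$; because the dihedral group $G=\langle\tau_0,\theta\rangle$ acts on $M$, it suffices to understand the products involving $a_0$.

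The engine of the argument is then to expand the products of the eigenvectors $x_1,y_1,z_1$ --- and, as the radius grows, of $x_i,y_i,z_i$ in general --- into their $a_0$-eigencomponents for the eigenvalues $1,0,\xi,\eta$, and to compare each such expansion with its image under $\theta$, using $\theta(p_{1,j})=p_{1,j}$, $\theta(p_{2,0})=p_{2,1}$ and so on. Every comparison yields a linear relation among a few consecutive axes whose coefficients are rational in $\xi,\eta,\lambda_1$, exactly as in (\ref{p2linrel}); these relations successively evaluate $\lambda_2,\lambda_3,\dots$, and, once $\adim<\infty$, they exhibit the first genuine linear dependence among the $a_i$. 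I would then feed the corresponding relation from the Section~2 lemma that lists the four possible linear dependences among the axes back into the eigenvector identities; the resulting over-determined system has only a finite list of solutions $(\xi,\eta,\lambda_1,\dots)$. For $\adim=3,4,5,6$ one checks that each admissible tuple is realised by precisely one of $\threealg{\xi}{\eta}{\alpha}$, $\fourone{\xi}{\eta}$, $\fourtwo{\xi}{\eta}{\mu}$, $\fourthree$, $\fiveone{\xi}$, $\fivetwo{\xi}$, $\sixone{\xi}$, $\sixtwo{\xi}$ of Section~3 or one of the quotients listed there, and conversely that each of those is an $\mathcal{F}(\xi,\eta)$-axial algebra admitting a flip; for $\adim=\infty$ the recursion never closes, which forces the single parameter pair realised by $\infalg$ and identifies $M$ with a quotient of $\infalg$. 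The hypothesis $\chf{F}\ne 5$ (and $\ne 2$) enters precisely where the denominators of these rational functions are divisible by $5$, and the sporadic algebras of Section~3 over fields with $\chf{F}\in\{3,7,11\}$ are the quotients that appear when a numerator vanishes together with such a denominator.

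The hard part will be twofold. First, one must \emph{bound the axial dimension}: show that no finite-dimensional algebra occurs with $\adim\ge 7$, i.e.\ that beyond axial dimension $6$ the eigenvector identities become consistent only along the single infinite chain underlying $\infalg$. Second, the polynomial elimination is genuinely heavy: each additional axis introduces a new unknown $\lambda_k$ (and, for the $\adim=4$ families, a further parameter such as $\mu$ or $\alpha$), the equations are non-linear in $\xi$ and $\eta$, and one must follow every factorisation --- those producing the Jordan-type, the $\eta=\xi/2$, $\eta=\tfrac{1}{2}$, $\xi=\tfrac{1}{4}$ and $\xi=2\eta$ branches, and their degenerations in characteristics $3$, $5$, $7$, $11$ --- without dropping a case. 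This is the bookkeeping deferred to Section~5.
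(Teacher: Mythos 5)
Your plan reproduces the paper's strategy in outline: dispose of the Jordan-type cases via Lemma \ref{lemhall}, exploit the $G$-invariance of the elements $p_{i,j}$ and the eigenvector expansions of $x_i,y_i,z_i$ to extract linear relations among consecutive axes, and then run a case analysis by axial dimension (this is exactly the decomposition into Steps \ref{propminf}--\ref{prop6} in Section 4, with the elimination carried out in Section 5). But the two items you defer as ``the hard part'' are not bookkeeping that can be waved at --- they are the proof, and for at least one of them your proposal does not supply the idea that makes it work. The bound on the axial dimension is not obtained by showing that ``beyond $6$ the identities are consistent only along the infinite chain'': it comes from the explicit formula (\ref{p2linrel}), which exhibits $p_{2,1}-p_{2,0}$ as a combination of $a_{-2},\dots,a_3$ whose $a_3$-coefficient is $\frac{\xi-4\eta}{4}$; since $p_{2,0}$ and $p_{2,1}$ are $\tau_0$-invariant, comparing this expression with its $\tau_0$-image yields a nontrivial linear relation among $a_{-3},\dots,a_3$ whenever $\xi\neq4\eta$, hence $\adim\leq6$ outright. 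The exceptional ray $\xi=4\eta$ --- which is precisely where $\infalg$ lives, since $2=4\cdot\frac12$ --- then requires a separate argument (Claim \ref{lem64} and Step \ref{propminf}). Without this mechanism your induction has no termination criterion.

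A second, related flaw is your treatment of $\infalg$. You propose the dichotomy ``$\adim<\infty$ gives an algebra in the table, $\adim=\infty$ gives $\infalg$,'' but quotients of $\infalg$ can have finite axial dimension, so a finite-$\adim$ algebra need not appear in Table \ref{tabalg} and cannot always be ``forced into'' one of the listed families. The paper avoids this by proving Step \ref{propminf} first --- the parameter specialization $(\xi,\eta,\lambda_1)=(2,\frac12,1)$ alone forces $M$ to be a quotient of $\infalg$ --- and then assuming $(\xi,\eta,\lambda_1)\neq(2,\frac12,1)$ throughout the remaining steps; this exclusion is invoked repeatedly (e.g.\ in Claims \ref{lemo5}, \ref{lemo4} and in Steps \ref{prop4e} and \ref{prop5}) to kill spurious branches of the elimination. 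Your proposal has no analogue of this reduction, and the residual polynomial elimination for $\adim=3,4,5,6$ (including ruling out $\dim\eisp{a_0}{\xi}=2$, tracking the parameters $\alpha,\mu$, and the characteristic-$3,7,11$ degenerations) is left entirely unexecuted. As it stands the proposal is a correct description of the shape of the argument, not a proof.
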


Our proof of Theorem 1 mostly works even when $\ch{F}=5$, and we obtain the following theorem.
\begin{theorem}
\sl
An axial algebra of Majorana type $(\xi,\eta)$ admitting a flip  over a field of characteristic $5$ is isomorphic to a primitive axial algebra of Jordan type, a quotient of $\infalg$, or one of the algebras listed in Table \ref{tabalg} unless $\adim\geq6$ .
\end{theorem}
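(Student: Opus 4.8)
\emph{Proof proposal.} The plan is to deduce Theorem~2 from the proof of Theorem~1 rather than to argue from scratch. That proof, carried out in Section~5, is a finite case analysis: the classification of linear relations among the axes (the four alternatives of Section~2.2, $\adim=2k-1$ or $2k$ with an even or an odd relation) reduces the problem to finitely many cases indexed by $\adim$ and the type of relation, and within each case one determines the scalars $\lambda_i$ and the products $a_0a_i$ recursively, eventually identifying $M$ with a primitive axial algebra of Jordan type, a quotient of $\infalg$, or one of the algebras of Table~\ref{tabalg}. In that argument the hypothesis $\ch\mathbb{F}\neq5$ (as opposed to $\ch\mathbb{F}\neq2$) is invoked only in the form ``a certain element of $\mathbb{F}$ is nonzero'', where the element is a polynomial in $\xi,\eta,\lambda_1,\dots$ with integer coefficients which is nonzero over $\mathbb{Z}$ but might vanish modulo~$5$; equivalently, it is used to discard certain branches of the case analysis. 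The strategy is to re-run the proof assuming only $\ch\mathbb{F}\neq2$, keep a ledger of every scalar that gets inverted, and check that for $\adim\le5$ no branch discarded in the original argument on the grounds that it would force $\ch\mathbb{F}=5$ actually survives in characteristic~$5$ --- or, if one does survive, that it still leads to an algebra already on the list.

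Concretely, I would first verify that each algebra of Table~\ref{tabalg} with $\adim\le5$, namely $\threealg{\xi}{\eta}{\alpha}$, $\fourone{\xi}{\eta}$, $\fourtwo{\xi}{\eta}{\mu}$, $\fourthree$, $\fiveone{\xi}$, $\fivetwo{\xi}$ and their quotients, is well defined over a field of characteristic~$5$: the integer denominators in their structure constants ($4$, $8$, $16$, $32$, $128$, $256$, $2048,\dots$) are powers of $2$ and hence units mod~$5$; the polynomial denominators such as $2\xi-1$, $3\xi-1$, $9\xi-4$ are nonzero precisely under the exclusions already imposed; and the integer coefficients that do vanish mod~$5$ --- the coefficient of $\hat p_1$ in $\hat p_1\hat p_1$ for $\fiveone{\xi}$, or the $5$ and $10$ in the recursion for $\fivetwo{\xi}$, so that $\hat a_{i+5}=\hat a_i$ when $\ch\mathbb{F}=5$ --- merely simplify the algebra without destroying it. I would then go through the case analysis of Section~5 for $\adim=1,2,3,4,5$. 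The cases $\adim\le2$ are immediate or reduce to Lemma~\ref{lemhall} and involve no division by~$5$. For $\adim=3,4,5$ the decisive computations are the determinations of $a_0p_{2,1}$, $p_{2,0}$, $p_1p_1$ and their higher analogues --- essentially the content of the final lemma of Section~2 and its extensions --- together with the solution of the resulting linear systems for $\lambda_1,\lambda_2$ and the remaining structure constants; here I would check that the denominators occurring ($\xi-2\eta$, $\xi-\eta$, $\eta(\xi-\eta)$, and the discriminant-type polynomials cutting out cases (i)--(iii) of each algebra) never acquire a factor of~$5$, so that exactly the same normal forms are reached and $M$ is pinned down to a listed algebra as before.

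I expect the main obstacle to be this audit in the case $\adim=5$, where $\fiveone{\xi}$, $\fivetwo{\xi}$ and $\fivetwo{\xi}^{\times}$ are isolated: there the closing step eliminates $a_0a_5$ together with the would-be relation among six consecutive axes through $5\times5$ (or larger) linear systems, and the recursion expressing $a_{i+5}$ through lower axes carries the binomial coefficients $\binom51=5$ and $\binom52=10$, so one must make sure that the minors actually used in the elimination are units mod~$5$ for the parameters $(\xi,\eta)$ that remain --- this being exactly where a careless reading of the characteristic-$\neq2,5$ argument could hide a division by~$5$. Finally I would isolate the genuine uses of $\ch\mathbb{F}\neq5$ in the complementary range $\adim\ge6$: in the identification of $\sixone{\xi}$, $\sixtwo{\xi}$ and their quotients, whose structure constants involve denominators such as $5\xi-2$ and $9\xi-4$ and whose parameter lists contain values like $\tfrac25$, meaningful only when $5\neq0$; and in the step showing that an axial algebra of Majorana type admitting a flip with $\adim=\infty$ must be a quotient of $\infalg$. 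Since Theorem~2 makes no claim for $\adim\ge6$, these obstructions are harmless, and combining the verified classification for $\adim\le5$ with Theorem~1 yields the statement.
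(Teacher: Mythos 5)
Your proposal is correct and is essentially the paper's own argument: the proof of Theorem 1 is deliberately organized so that Steps 2--5 (which cover $\adim\le5$) never invoke $\ch\mathbb{F}\neq5$, and the only place outside Step 6 where characteristic $5$ matters is the identification of quotients of $\infalg$ (Step 1), whose statement explicitly allows $\ch\mathbb{F}=5$ when $\adim\le5$ by replacing a division by $10$ with an argument from the linear relation $a_3=\sum_{j=-2}^{2}\alpha_ja_j$. The one refinement to your audit is that this genuine characteristic-$5$ obstruction below $\adim=6$ sits in that $\infalg$ step rather than in the $\fivetwo{\xi}$ recursion you flag, but your stated fallback (a surviving branch must still land on the list) is exactly how the paper disposes of it.
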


\begin{threeparttable}
\begin{center}
{\small\def\arraystretch{1.5}
\tabcolsep0.5em
\begin{tabular}{|c|c|p{15em}|p{9em}|c|c|}
\hline
$\adim$&$d$&\multicolumn{1}{|c|}{Universal types $M$} & \multicolumn{1}{|c|}{Quotients $\bar{M}$ of $M$}&$\bar{D}$&$\bar{d}$  \\ 
\hline
3&4&$\threealg{\xi}{\eta}{0}$\par
\hfill with $\xi\notin\{0,1,\frac{1}{2}\}$ and $\eta\notin\{0,1,\xi\}$
& $\threealg{\xi}{\frac{1-3\xi^2}{3\xi-1}}{0}^{\times}$\par
\hfill with $\xi\notin\{\pm\frac{1}{3},\frac{1}{\sqrt{3}}\}$ 
&3&3\\ 
\cline{3-4}
&&$\threealg{\xi}{\frac{1}{2}}{\alpha}$ with $\xi\notin\{0,1,\frac{1}{2}\}$
&$\threealg{-1}{\frac{1}{2}}{\alpha}^{\times}$\par
\hfill when $\chf{F}\neq3$ \par
$\threealg{\xi}{\frac{1}{2}}{-3}^{\times}$
&&
\\ \hline
4&5&$\fourone{\frac{1}{4}}{\eta}$ \par
\hfill with $\eta\notin\{0,1,\frac{1}{4}\}$ 
when $\chf{F}\neq3$
& $\fourone{\frac{1}{4}}{\frac{1}{2}}^{\times}$
&4&4\\ 
\cline{3-4}
&&$\fourone{\xi}{\frac{\xi}{2}}$ with $\xi\notin\{0,1,2\}$
& $\fourone{-\frac{1}{2}}{-\frac{1}{4}}^{\times}$\par
\hfill when $\chf{F}\neq3,5$ 
&&
\\ 
\cline{3-6}
&&$\fourtwo{\frac{1}{2}}{\eta}{\frac{1-4\eta}{2\eta}}$ with $\eta\notin\{0,1,\frac{1}{2}\}$
&&& \\ 
\cline{3-6}
&&$\fourtwo{\xi}{\frac{\xi^2}{2}}{\frac{1}{\xi}}$ \par
\hfill with $\xi\notin\{0,1,2,\pm\sqrt{2}\}$
& $\fourtwo{-1}{\frac{1}{2}}{-1}^{\times}$\par
\hfill when $\chf{F}\neq3$
&4&4 \\ 
\cline{3-6}
&&$\fourtwo{\xi}{\frac{1-\xi^2}{2}}{\frac{-1}{\xi+1}}$\par
\hfill with $\xi\notin\{0,\pm1,\pm\sqrt{-1},-1\pm\sqrt{2}\}$
&&& \\ 
\cline{3-6}
&&$\fourthree$ when $\chf{F}\neq3$
& $\fourthree^{\times}$&4&4 \\ 
\hline
5&6&$\fiveone{\xi}$\par
\hfill with $\xi\notin\{0,1,\frac{-1}{3},\frac{1}{5},\frac{9}{5}\}$
&&& \\ 
\cline{3-6}
&&$\fivetwo{\xi}$ with $\xi\notin\{0,1,\frac{1}{2}\}$
& $\fivetwo{\xi}^{\times}$&4&5 \\ 
\hline
6&8&$\sixone{\xi}$ with $\xi\notin\{0,1,2\}$
& $\rmsix_1(\frac{-2}{7},\frac{-1}{7})^{\times}$\par
\hfill when $\chf{F}\neq7$&6&7 \\ 
\cline{3-4}
&&$\sixtwo{\xi}$\par
\hfill with $\xi\notin\{0,1,\frac{4}{9},\frac{2}{5},-4\pm2\sqrt{5}\}$
&
$\rmsix_2(\frac{2}{3},\frac{-1}{3})^{\times}$\par
\hfill when $\chf{F}\neq3$ \par
$\rmsix_2(\frac{1\pm\sqrt{97}}{24},\frac{53\pm5\sqrt{97}}{192})^{\times}$ \par
\hfill when $\chf{F}\neq3,11$\par
\hfill and $\mathbb{F}\ni\sqrt{97}$ \par
$\rmsix_2(2,-4)^{\times}$\par
\hfill when $\chf{F}=11$
&& \\
\hline
\end{tabular}
}
\end{center}
\caption{Axial algebras of Majorana type $(\xi,\eta)$ admitting a flip which are not of Jordan 
type, where $\bar{D}$ is the axial dimension of $\bar{M}$ and $\bar{d}$ is the dimension of $\bar{M}$.}
\label{tabalg}
\end{threeparttable}

Figure \ref{graphxieta} and \ref{graphxieta2} describe a graph of $(\xi,\eta)$ of $\infalg$ and 11 universal types of the table \ref{tabalg} except $\threealg{\xi}{\eta}{0}$ over $\mathbb{R}$.
The graphs of $\fourone{\frac{1}{4}}{\eta}$, $\fourtwo{\xi}{\frac{\xi^2}{2}}{\frac{1}{\xi}}$, $\fiveone{\xi}$ and $\sixtwo{\xi}$ intersect at the point $(\frac{1}{4},\frac{1}{32})$, where the  fusion rule equals $\ifusion$.

\begin{figure}
\includegraphics[width=100mm]{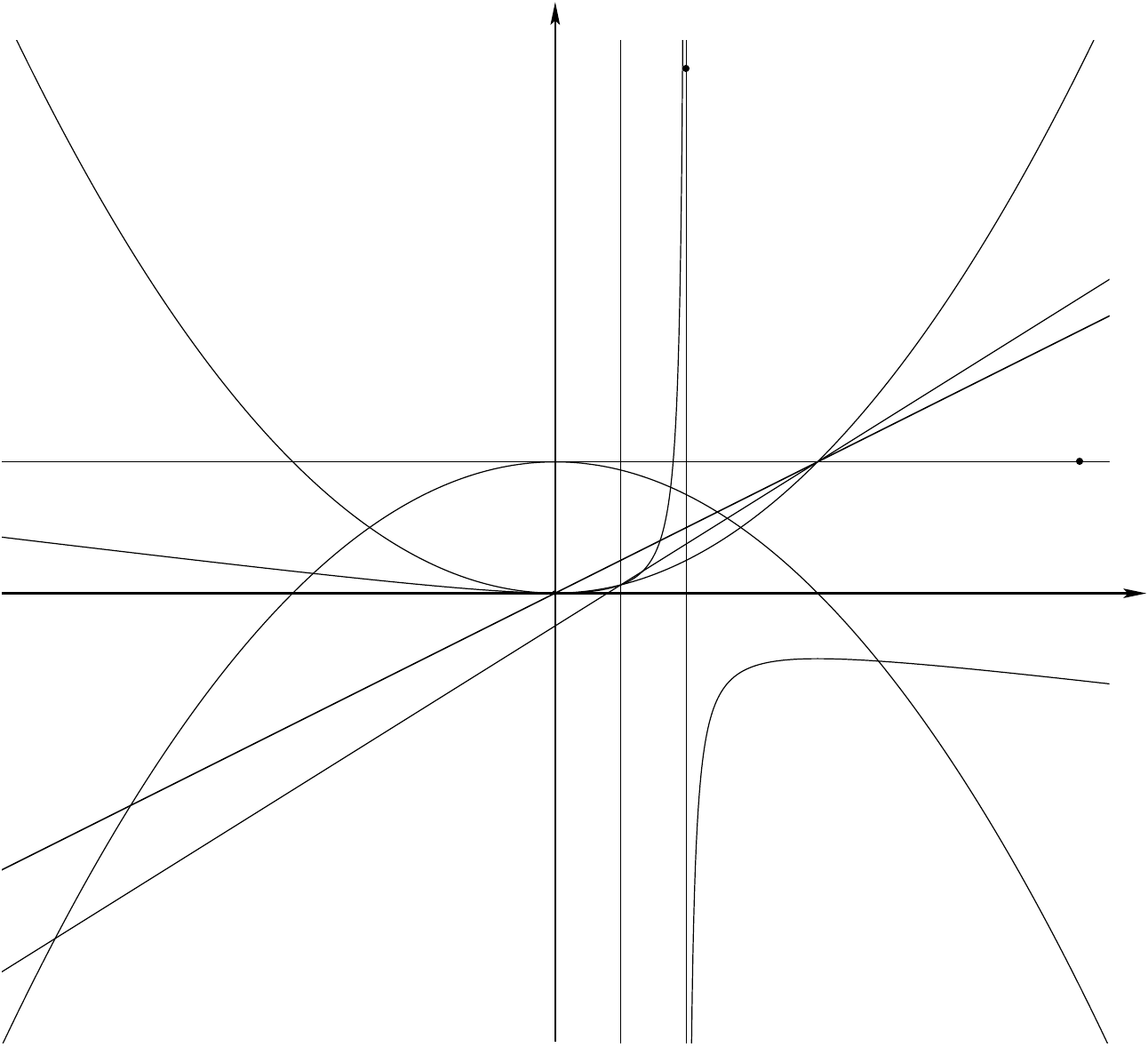}
\begin{picture}(0,0)
\put(40,143){\makebox(0,0){$\threealg{\xi}{\frac{1}{2}}{\alpha}, \fivetwo{\xi}$}}
\put(0,110){\makebox(0,0){$\xi$}}
\put(-150,265){\makebox(0,0){$\eta$}}
\put(-25,135){\makebox(0,0){$\infalg$}}
\put(40,195){\makebox(0,0){$\fourone{\xi}{\frac{\xi}{2}}, \sixone{\xi}$}}
\put(20,175){\makebox(0,0){$\fiveone{\xi}$}}
\put(15,250){\makebox(0,0){$\fourtwo{\xi}{\frac{\xi^2}{2}}{\frac{1}{\xi}}$}}
\put(-90,260){\makebox(0,0){$\fourtwo{\frac{1}{2}}{\eta}{\frac{1-4\eta}{2\eta}}$}}
\put(-95,240){\makebox(0,0){$\fourthree$}}
\put(-140,-10){\makebox(0,0){$\fourone{\frac{1}{4}}{\eta}$}}
\put(30,0){\makebox(0,0){$\fourtwo{\xi}{\frac{1-\xi^2}{2}}{\frac{-1}{\xi+1}}$}}
\put(20,90){\makebox(0,0){$\sixtwo{\xi}$}}
\put(-250,130){\makebox(0,0){$\sixtwo{\xi}$}}
\end{picture}
\caption{The graphs of $(\xi,\eta)$'s which give axial algebra of Majorana type admitting a flip.}
\label{graphxieta}
\end{figure}

\begin{figure}
\includegraphics[width=100mm]{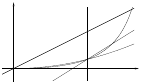}
\begin{picture}(0,0)
\put(0,30){\makebox(0,0){$\xi$}}
\put(-260,170){\makebox(0,0){$\eta$}}
\put(30,145){\makebox(0,0){$\fourone{\xi}{\frac{\xi}{2}}, \sixone{\xi}$}}
\put(12,105){\makebox(0,0){$\fiveone{\xi}$}}
\put(15,78){\makebox(0,0){$\fourtwo{\xi}{\frac{\xi^2}{2}}{\frac{1}{\xi}}$}}
\put(-110,165){\makebox(0,0){$\fourone{\frac{1}{4}}{\eta}$}}
\put(-30,165){\makebox(0,0){$\sixtwo{\xi}$}}
\end{picture}
\caption{An enlargement of Figure \ref{graphxieta} around (0,0).}
\label{graphxieta2}
\end{figure}

\begin{note}
The algebras $\threealg{\frac{1}{4}}{\frac{1}{32}}{0}$, $\fourone{\frac{1}{4}}{\frac{1}{32}}$, $\fourtwo{\frac{1}{4}}{\frac{1}{32}}{4}$, $\rmfive_1(\frac{1}{4},\frac{1}{32})$ and $\rmsix_2(\frac{1}{4},\frac{1}{32})$ are isomorphic to the Norton-Sakuma algebras $\mathrm{3A}$, $\mathrm{4A}$, $\mathrm{4B}$, $\mathrm{5A}$ and $\mathrm{6A}$, respectively (\cite{r14}).
Other Norton-Sakuma algebras are of Jordan type.
\end{note}

\begin{note}
In \cite{r14}, F.\ Rehren found the algebras $\threealg{\xi}{\eta}{0}$,  $\fourone{\frac{1}{4}}{\eta}$, $\fourtwo{\xi}{\xi^2}{\frac{1}{\xi}}$, $\fiveone{\xi}$ and $\sixtwo{\xi}$ and they call these algebras $(3A^{\prime}_{\alpha,\beta})$, $(4A_{\beta})$, $(4B_{\alpha})$, $(5A_{\alpha})$ and $(6A_{\alpha})$, respectively.
\end{note}

\begin{note}
When $M$ is isomorphic to $\threealg{\xi}{\eta}{0}$ or its quotients of axial dimension 3, $G$ is isomorphic to the dihedral group $D_6$.
When $M$ is isomorphic to $\fourone{\xi}{\eta}$, $\fourtwo{\xi}{\eta}{\mu}$ or their quotients of axial dimension 4, $G$ is isomorphic to the dihedral group $D_8$.
When $M$ is isomorphic to $\fiveone{\xi}$, $G$ is isomorphic to the dihedral group $D_{10}$.
When $M$ is isomorphic to $\fourthree$, $\fourthree^{\times}$, $\sixone{\xi}$, $\sixtwo{\xi}$, $\rmsix_1(\frac{-2}{7},\frac{-1}{7})^{\times}$, $\rmsix_2(\frac{2}{3},\frac{-1}{3})^{\times}$ or $\rmsix_2(\frac{1\pm\sqrt{97}}{24},\frac{53\pm5\sqrt{97}}{192})^{\times}$, $G$ is isomorphic to the dihedral group $D_{12}$. 
\end{note}

Let us now describe an outline of the proof.
The details will be given in the next section.

Assume that $\xi,\eta\in\mathbb{F}\setminus\{0,1\}$ are distinct, $M$ is an axial algebra of Majorana type $(\xi,\eta)$ generated by $\{a_0,a_1\}$, and $M$ admits a flip.
We prove the theorem by the following steps.
\begin{step}\label{propminf}
\sl
Suppose that $(\xi,\eta)=(2,\frac{1}{2})$ and $\lambda_1=1$.
If $\mathrm{ch}\mathbb{F}\neq5$ or $\adim\leq5$, then $M$ is a quotient of $\infalg$.
\end{step}

Assume that $M$ is not isomorphic to quotients of $\infalg$.

\begin{step}\label{prop3}
\sl
If $\adim\leq3$ and $M$ is not of Jordan type, then one of the following statements holds:
\begin{itemize}
\item[\rm{(1)}]$\xi\notin\{0,1,\frac{1}{2}\}$, $\eta\notin\{0,1,\xi\}$ and $M$ is isomorphic to $\threealg{\xi}{\eta}{0}$.
\item[\rm{(2)}]$\xi\notin\{0,1,\frac{1}{2},\pm\frac{1}{3},\frac{1}{\sqrt{3}}\}$ and $M$ is isomorphic to $\threealg{\xi}{\frac{1-3\xi^2}{3\xi-1}}{0}^{\times}$.
\item[\rm{(3)}]$\xi\notin\{0,1,\frac{1}{2}\}$ and $M$ is isomprphic to $\threealg{\xi}{\frac{1}{2}}{\alpha}$.
\item[\rm{(4)}]$\xi\notin\{0,1,\frac{1}{2}\}$ and $M$ is isomorphic to $\threealg{\xi}{\frac{1}{2}}{-3}^{\times}$.
\item[\rm{(5)}]$\chf{F}\neq3$ and $M$ is isomorphic to $\threealg{-1}{\frac{1}{2}}{\alpha}^{\times}$.
\end{itemize}
\end{step}

\begin{step}
\sl
\label{prop4e}
If $\adim=4$ and $M$ satisfies an even relation, then $\xi\notin\{0,1,\frac{1}{2}\}$ and $M$ is isomorphic to $\fivetwo{\xi}^{\times}$.
\end{step}

\begin{step}
\label{prop4o}
\sl
If $M$ is of axial dimension 4 and satisfies an odd relation, then one of the following holds:
\begin{itemize}
\item[\rm{(1)}]\label{a411}
$\chf{F}\neq3$ and $\eta\notin\{0,1,\frac{1}{4}\}$ and $M$ is isomorphic to $\fourone{\frac{1}{4}}{\eta}$.
\item[\rm{(2)}]\label{a411q}
$\chf{F}\neq3$ and $M$ is isomorphic to $\fourone{\frac{1}{4}}{\frac{1}{2}}^{\times}$.
\item[\rm{(3)}]\label{a412}
$\xi\notin\{0,1,2\}$ and $M$ is isomorphic to $\fourone{\xi}{\frac{\xi}{2}}$.
\item[\rm{(4)}]\label{a412q}
$\chf{F}\neq3,5$ and $M$ is isomorphic to $\fourone{-\frac{1}{2}}{-\frac{1}{4}}^{\times}$. 
\item[\rm{(5)}]\label{a421}
$\eta\notin\{0,1,\frac{1}{2}\}$ and $M$ is isomorphic to $\fourtwo{\frac{1}{2}}{\eta}{\frac{1-4\eta}{2\eta}}$.
\item[\rm{(6)}]\label{a422}
$\xi\notin\{0,1,2,\pm\sqrt{2}\}$ and $M$ is isomorphic to $\fourtwo{\xi}{\frac{\xi^2}{2}}{\frac{1}{\xi}}$.
\item[\rm{(7)}]\label{a423}
$\xi\notin\{0,\pm1,\pm\sqrt{-1},-1\pm\sqrt{2}\}$ and $M$ is isomorphic to $\fourtwo{\xi}{\frac{1-\xi^2}{2}}{\frac{-1}{\xi+1}}$.
\item[\rm{(8)}]\label{a422q}
$\chf{F}\neq3$ and $M$ is isomorphic to $\fourtwo{-1}{\frac{1}{2}}{-1}^{\times}$.
\item[\rm{(9)}]\label{a43}
$\chf{F}\neq3$ and $M$ is isomorphic to $\fourthree$ or its quotient $\fourthree^{\times}$.
\end{itemize}
\end{step}

\begin{step}
\label{prop5}
\sl
If $\adim=5$, then one of the following holds:.
\begin{itemize}
\item[\rm{(1)}]$\xi\notin\{0,1,\frac{-1}{3},\frac{1}{5},\frac{9}{5}\}$ and $M$ is isomorphic to $\fiveone{\xi}$.
\item[\rm{(2)}]$\xi\notin\{0,1,\frac{1}{2}\}$ and $M$ is isomorphic to $\fivetwo{\xi}$.
\end{itemize}
\end{step}

\begin{step}
\label{prop6}
\sl
If $\adim\geq6$ and $\chf{F}\neq5$, then one of the following holds:
\begin{itemize}
\item[\rm{(1)}] $\xi\notin\{0,1,2\}$ and $M$ is isomorphic to $\sixone{\xi}$.
\item[\rm{(2)}]$\chf{F}\neq7$ and $M$ is isomorphic to $\rmsix_1(\frac{-2}{7},\frac{-1}{7})^{\times}$.
\item[\rm{(3)}]$\xi\notin\{0,1,\frac{4}{9},\frac{2}{5},-4\pm2\sqrt{5}\}$ and $M$ is isomorphic to $\sixtwo{\xi}$.
\item[\rm{(4)}]$\chf{F}\neq3$ and $M$ is isomorphic to $\rmsix_2(\frac{2}{3},\frac{-1}{3})^{\times}$.
\item[\rm{(5)}]$\mathbb{F}\ni\sqrt{97}$, $\chf{F}\neq3,11$ and $M$ is isomorphic to $\rmsix_2(\frac{1\pm\sqrt{97}}{24},\frac{53\pm5\sqrt{97}}{192})^{\times}$.
\item[\rm{(6)}]$\chf{F}=11$ and $M$ is isomorphic to $\rmsix_2(2,-4)^{\times}$.
\end{itemize}
\end{step}

\section{Proof of the theorem}
Let $M$ be an axial algebra of Majorana type $(\xi,\eta)$ generated by $\{a_0,a_1\}$ over a field $\mathbb{F}$ of characteristic not 2 admitting a flip.
Recall the notations in Section 2 and recall that $f_i\in G$ denotes the automorphism $f_i=(\theta\circ\tau_0)^i$.

\subsection{Proof of Step \ref{propminf}}
Assume that $(\xi,\eta,\lambda_1)=(2,\frac{1}{2},1)$ and either $\chf{F}\neq5$ or $\adim\le5$ holds.

\begin{claim}
\sl
$p_{i,j}=p_{i,0}$ and $\lambda_i=1$ for all $i\in\mathbb{Z}_{>0}$ and $j\in\mathbb{Z}$.
\end{claim}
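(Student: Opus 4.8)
The plan is to prove both assertions simultaneously by induction on $i\in\mathbb{Z}_{>0}$, exploiting the special relations that $\xi=2\eta$ and $\eta=\tfrac12$ force on the structure constants. First I would record the consequence of the hypothesis $\lambda_1=1$ together with $(\xi,\eta)=(2,\tfrac12)$: from (\ref{multiaipij}) the multiplication $a_jp_{1,j}$ collapses, and from Lemma \ref{lemhall}(3)-style computations (or rather from the explicit formulas for $x_i,y_i,z_i$ in Section 2.3, which give $z_ia_0=0$, $x_ia_0=\xi x_i$, $y_ia_0=\eta y_i$) one sees that $p_1=p_{1,0}$ acts as a scalar. Indeed with $\xi=2\eta$ the hypothesis $\lambda_1=\tfrac{\eta}{2}\cdot\text{(something)}$ is replaced by $\lambda_1=1$, so the plan is to first verify $p_1w=cw$ for an explicit constant $c$ (here $c = (1-\eta)\lambda_1-\eta = \tfrac12\cdot 1-\tfrac12 = 0$, so in fact $p_1$ is an \emph{absorbing} element: $p_1w=0$ for all $w$), using the fusion rule exactly as in the proof of Lemma \ref{lem2}/Lemma \ref{lemhall}(3).

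Next I would set up the induction. The base case $i=1$ is the hypothesis $\lambda_1=1$ together with the trivial $p_{1,j}=p_{1,j}$; for $i=2$ one uses the displayed relation (\ref{multia0p21`}) (valid since $\xi=2\eta$) to pin down $p_{2,0}$, and the flip-invariance relations for $p_{2,0},p_{2,1}$ to get $p_{2,1}=p_{2,0}$, hence via $a_0$-eigendata that $\lambda_2=1$. For the inductive step, assuming $p_{k,j}=p_{k,0}$ and $\lambda_k=1$ for all $k<i$, I would express $p_{i,j}$ in terms of lower $p$'s and the $a$'s: the key tool is that $p_{i,j}-p_{i,0}$ is $\tau_0$-related to $p_{i,-j}-p_{i,0}$ and, by the last sentence of Section 2.3 ("we can decide $a_0(p_{i,j}+p_{i,-j})$"), multiplication by $a_0$ kills the relevant differences. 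Combining the action of $a_0$ (using $\xi=2\eta$, so the $p_{i,1}$-coefficient vanishes as in the Lemma preceding Section 3) with the action of $p_1=0$ and the fusion rule forces $p_{i,j}=p_{i,0}$; then applying $\varphi_0$ and using $\lambda_{i-1}=\lambda_{i-2}=\cdots=1$ together with (\ref{multiaipij}) at $\xi=2\eta,\eta=\tfrac12$ yields $\lambda_i=1$.

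The role of the hypothesis "$\chf{F}\neq5$ or $\adim\le5$" is to guarantee the recursion does not degenerate: the coefficients appearing when one inverts the system (denominators like $\xi-2\eta$ do vanish here, which is precisely why (\ref{multia0p21`}) rather than (\ref{multia0p21}) is used, but other denominators such as $2\xi-1=3$, $\xi-\eta=\tfrac32$, and factors coming from $\fiveone{\xi}$-type relations specialize to things divisible by $5$). So the plan is: whenever a coefficient in the linear combination expressing $p_{i,j}$ would have a factor of $5$ in the denominator, invoke either $\chf{F}\neq5$ directly, or — when $\chf{F}=5$ — the bound $\adim\le5$ to truncate the induction before that coefficient is needed (for $\adim\le5$ only finitely many $p_{i,j}$, $i\le 3$ say by Lemma \ref{lem1}, are independent, and those are handled by the $i=1,2,3$ cases directly). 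The main obstacle I anticipate is exactly this bookkeeping: tracking which structure-constant denominators vanish at $(\xi,\eta)=(2,\tfrac12)$ versus which vanish only in characteristic $5$, and checking that in the latter case the axial-dimension bound really does cut off the induction in time. The algebraic manipulations themselves (applying $a_0$, $p_1$, $\varphi_0$, and the flip to the elements $x_i,y_i,z_i$) are routine given the formulas already assembled in Section 2.3.
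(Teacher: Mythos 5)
Your proposal has a genuine gap at its core: the assertion that $p_1$ acts as the scalar $(1-\eta)\lambda_1-\eta=0$, i.e.\ that $p_1w=0$ for all $w$. Lemma \ref{lemhall}(3) is a statement about \emph{primitive axial algebras of Jordan type}, where one of the eigenspaces $\eisp{a_0}{\xi}$, $\eisp{a_0}{\eta}$ vanishes; it does not apply to $M$ here. Indeed, in the target algebra $\infalg$ itself one has $\hat{p}_1\hat{a}_0=\frac{3}{2}\hat{p}_1-\frac{3}{4}\hat{a}_0+\frac{3}{8}(\hat{a}_{-1}+\hat{a}_1)\neq0$, so $p_1$ is not absorbing and does not act semisimply by a scalar. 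Since your inductive step hinges on "the action of $p_1=0$" to force $p_{i,j}=p_{i,0}$, the argument collapses. The paper's actual mechanism is quite different: from $a_0(x_kx_{i-k}-z_kz_{i-k}-\varphi_0(x_kx_{i-k})a_0)=0$ one gets $a_0(p_{i,k}+p_{i,-k})=3p_{i,0}+2\rho_ka_0+\frac{3}{4}(a_i+a_{-i})$, then the explicit computation of $p_1p_{i-1,0}$ together with its $f_k$-invariance yields the three-term recurrence $p_{i,k+1}-4p_{i,k}+p_{i,k-1}=p_{i,1}-4p_{i,0}+p_{i,-1}$, whose solution involves $(2\pm\sqrt{3})^k$; constancy follows unless $(2+\sqrt{3})^i=1$, and that degenerate case is killed by a second computation ($p_{2,0}p_{i-2,0}$) producing $10(p_{i,0}-p_{i,1})=0$.

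This also shows that your diagnosis of the hypothesis "$\chf{F}\neq5$ or $\adim\le5$" is off target: the $5$ does not come from denominators of structure constants but from the coefficient $10$ in the identity $10(p_{i,0}-p_{i,1})=0$ above, which is vacuous in characteristic $5$. In that characteristic the paper replaces the recurrence argument entirely: $\adim\le5$ gives a linear relation $a_3=\sum_{j=-2}^{2}\alpha_ja_{i}$ which propagates to $a_i=\sum_j\alpha_ja_{i+j-3}$ for all $i$, and the induction then runs on $p_{i,0}=\sum_j\alpha_jp_{|i+j-3|,0}+(\sum_j\alpha_j-1)\eta a_0$ together with $f_i$-invariance. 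In particular your suggestion that $\adim\le5$ lets you "truncate the induction" at $i\le3$ is not viable — the claim concerns all $i\in\mathbb{Z}_{>0}$, and the induction must still be carried out for every $i$, just with a different engine. You would need to replace the $p_1$-acts-as-zero step with an explicit computation of $p_1p_{i-1,0}$ (or an equivalent source of relations among the $p_{i,k}$) for the proof to go through.
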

\begin{proof}
First, we consider the case when  $\mathrm{ch}\mathbb{F}\neq5$.
We prove the claim by induction on $i$.
If $i=1$, then $p_{1,0}=p_{i,j}$ for all $j$.
Assume that $p_{j,l}=p_{j,0}$ for all $l$ and $\lambda_j=1$ if $j<i$.
Since $a_0(x_kx_{i-k}-z_kz_{i-k}-\varphi_0(x_kx_{i-k})a_0)=0$, for $0<k<i$, there exist $\rho_k\in\mathbb{F}$ such that  $a_0(p_{i,k}+p_{i,-k})=3p_{i,0}+2\rho_ka_0+\frac{3}{4}(a_i+a_{-i})$. 
This suffices if $k=0$.
\begin{eqnarray*}
p_1p_{i-1,0}&&=\frac{1}{2}a_0(z_1x_{i-1}-z_1z_{i-1})-(z_1x_{i-1}-p_1p_{i-1,0})\\
&&=(-\frac{3}{4}\rho_1+\frac{9}{16})a_0+\frac{3}{4}(p_1+p_{i-1,0})-\frac{3}{8}p_{i-2,0}+\frac{3}{16}(p_{i,1}+p_{i,-1}-4p_{i,0}).
\end{eqnarray*}
Since $f_i(p_{i-1,0})=p_{i-1,0}$ and $f_i(p_{i-2,0})=p_{i-2,0}$ by the inductive hypothesis, $\rho_1=\frac{3}{4}$ or $a_0=a_i$.
If $a_0=a_i$, $p_{i,j}=0$ for all $j$.
So we may assume $a_0\neq a_i$ and $\rho_1=\frac{3}{4}$.
Then, since $f_k(p_1p_{i-1,0})=p_1p_{i-1,0}$, $p_{i,1}+p_{i.-1}-4p_{i,0}=p_{i,k+1}+p_{i,k-1}-4p_{i,k}$.
If $\mathbb{F}[\sqrt{3}]\ni(2+\sqrt{3})^i\neq1$, then $p_{i,k}=p_{i,0}$ for all $k$.

If $(2+\sqrt{3})^i=1$, then
\begin{eqnarray*}
p_{i,k}=&&(2-\frac{(2+\sqrt{3})^k+(2-\sqrt{3})^k}{2})p_{i,0}\\
&&+\frac{(1+\sqrt{3})(2+\sqrt{3})^k-(1-\sqrt{3})(2-\sqrt{3})^k-2\sqrt{3}}{4\sqrt{3}}p_{i,1}\\
&&+\frac{(1+\sqrt{3})(2-\sqrt{3})^k-(1-\sqrt{3})(2+\sqrt{3})^k-2\sqrt{3}}{4\sqrt{3}}p_{i,-1}\\
\end{eqnarray*}
and $i\geq5$.
Since 
\begin{eqnarray*}
p_{2,0}p_{i-2,0}&&=\frac{1}{2}a_0(z_2x_{i-2}-z_2z_{i-2})-(z_2x_{i-2}-p_2p_{i-2,0})\\
&&=-(\frac{3}{4}\rho_2+\frac{9}{16})a_0+\frac{3}{4}(p_2+p_{i-2,0})-\frac{3}{8}p_{i-4,0}+\frac{3}{16}(p_{i,2}+p_{i,-2}-4p_{i,0}),
\end{eqnarray*}
$0=p_{i,2}+p_{i.-2}-4p_{i,0}-(p_{i,3}+p_{i,-1}-4p_{i,1})=10(p_{i,0}-p_{i,1})$ and then $p_{i,k}=p_{i,0}$ for all $k$.

If $p_{i,1}=p_{i,0}$, then $\rho_1=-\lambda_i+\frac{1}{4}$.
Since $\rho_1=\frac{3}{4}$,  $\lambda_i=1$.

In the case when $\adim\leq 5$ and $\ch\mathbb{F}=5$, we prove it by induction about $i$, too.
$p_{2,1}=p_{2,0}$ and $\lambda_2=1$ by the same calculation as the case when $\ch\mathbb{F}\neq5$.
Since there exist $\alpha_{-2}\ldots\alpha_{2}\in\mathbb{F}$ such that $a_3=\sum_{j=-2}^2\alpha_j a_j$ in this case,  $a_i=f_{i-3}(a_3)=\sum_{j=-2}^2\alpha_j a_{i+j-3}$.
Set $i\geq3$ and assume that $p_{k,l}=p_{k,0}$ for all $l$ if $k<i$.
Then $p_{i,0}=\sum_{j=-2}^{2}\alpha_j p_{|i+j-3|,0}+(\sum_{j=-2}^2\alpha_j-1)\eta a_0$.
Since $f_i(p_{i,0})=p_{i,0}$, $\sum_{i=-2}^2\alpha_i=1$ or $a_0=a_i$ by the inductive hypothesis.
So $p_{i,k}=p_{i,0}$. 
By the same calculation as the case when $\ch\mathbb{F}\neq5$, $\lambda_i=1$ holds.
\end{proof}

By this claim, $p_{i,0}p_{j,0}=\frac{1}{2}a_0(z_ix_j-z_iz_j)-(z_ix_j-p_ip_j)=\frac{3}{4}(p_{i,0}+p_{j,0})-\frac{3}{8}(p_{i+j,0}+p_{|i-j|,0})$ for all $i,j\in\mathbb{Z}$.
Thus there exists a surjective homomorphism $\infalg$ to $M$ such that $\hat{a}_i\mapsto a_i$ and $\hat{p}_i\mapsto p_{i,0}$.
Thus $M$ is a quotient of $\infalg$.

\subsection{Proof of Step \ref{prop3}}
From this subsection, we assume that $M$ is not isomorphic to quotients of $\infalg$.
By Step \ref{propminf}, $(\xi,\eta,\lambda_1)\neq(2,\frac{1}{2},1)$ if $\chf{F}\neq5$ or $\adim\le5$.

Assume $\adim\le3$.
\begin{claim}
\sl
\label{lem32}
$\adim=3$ and $\dim M\geq3$.
\end{claim}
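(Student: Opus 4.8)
The claim asserts two things under the standing assumption $\adim \le 3$: first $\adim = 3$, and second $\dim M \ge 3$. The plan is to rule out $\adim \in \{1,2\}$ by hand and then deduce $\dim M \ge 3$ using Lemma~\ref{lem3}. I expect the $\dim M \ge 3$ half to be nearly immediate from the structural lemmas already proved, while the bulk of the work is showing $\adim$ cannot be $1$ or $2$; within that, the case $\adim = 2$ is the main obstacle.

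\textbf{Ruling out $\adim \le 1$.} If $\adim = 1$ then $\saxes$ is spanned by $a_0$, so $a_1 \in \mathbb{F}a_0$; since both are idempotents (primitive in their $1$-eigenspaces) and $M$ is generated by $a_0, a_1$, one gets $a_1 = a_0$ and $M = \mathbb{F}a_0$, which is the associative algebra $\mathbb{F}$, hence of Jordan type — contradicting the hypothesis that $M$ is not of Jordan type. (The case $\adim = 0$ is vacuous since $a_0 \ne 0$.)

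\textbf{Ruling out $\adim = 2$.} Here $\{a_0, a_1\}$ is a basis of $\saxes$ by Lemma~\ref{lem1}, and applying the previous lemma (the four-case linear relation lemma) in the $\adim = 2$, i.e. $k = 1$ case forces either property~(3), giving $\alpha_1 a_0 = 0$, impossible, or property~(4) with $k=1$, giving $\alpha_1(a_{i_3} - a_{i_2}) = \alpha_1(a_{-1}-a_1) = 0$, so $a_{-1} = a_1$, i.e. $\tau_0(a_1) = a_1$. This means $a_1 \in \eisp{a_0}{1}\oplus\eisp{a_0}{0}\oplus\eisp{a_0}{\xi}$, so the $\eta$-component of $a_1$ with respect to $a_0$ vanishes. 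Writing $a_1 = \lambda_1 a_0 + u_0 + u_\xi$ with $u_0 \in \eisp{a_0}{0}$, $u_\xi \in \eisp{a_0}{\xi}$, one now has to show $M$ is necessarily of Jordan type — the idea is that $a_1$ lives in the sum of only three eigenspaces of $a_0$, and by the flip symmetry ($\theta$ swaps $a_0 \leftrightarrow a_1$, so symmetrically $a_0$ has trivial $\eta$-component with respect to $a_1$) the whole algebra $M = \langle a_0, a_1\rangle$ is spanned by $a_0, a_1, p_1$ and products that stay inside $\eisp{a_0}{1}\oplus\eisp{a_0}{0}\oplus\eisp{a_0}{\xi}$; one checks via the fusion rule $\mathcal{F}(\xi,\eta)$ that this subspace is a subalgebra, so $M$ coincides with it and every axis has zero $\eta$-eigenspace, making $M$ a primitive axial algebra of Jordan type $\xi$. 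This contradicts the hypothesis. The delicate point is controlling $p_1$ and verifying the three-eigenspace span is closed under multiplication; this is where I would spend the most care, possibly invoking Lemma~\ref{lemhall} once Jordan type is established to finish, or more directly noting $\eisp{a_0}{\eta} = 0$ is exactly the defining condition placing $M$ among Jordan-type algebras as recalled at the start of Section~3.

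\textbf{Conclusion $\dim M \ge 3$.} With $\adim = 3$ established, $M$ contains the linearly independent axes $a_0, a_1$ and also $p_1 = a_0 a_1 - \eta(a_0 + a_1)$. Since $\adim = 3 > 2$, in particular $a_2 \notin \mathbb{F}a_0 + \mathbb{F}a_1$, so $a_0 a_1 \notin \mathbb{F}a_0 + \mathbb{F}a_1$ would follow — but more cleanly, Lemma~\ref{lem3} states precisely that if $\dim M \ge 3$ then $p_1 \ne 0$; here we need the converse direction, so instead argue directly: if $\dim M \le 2$ then $M = \mathbb{F}a_0 + \mathbb{F}a_1$ forces $\saxes$ to have dimension at most $2$, contradicting $\adim = 3$. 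Hence $\dim M \ge 3$, completing the proof of the claim.
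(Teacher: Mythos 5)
Your overall architecture (rule out $\adim\in\{1,2\}$, then get $\dim M\ge 3$) is reasonable, and the $\adim\le 1$ step and the final step are fine; indeed $\dim M\ge\mathrm{Adim}\,M$ is immediate once $\adim=3$ is known, and the paper instead gets $\dim M\ge3$ directly from the non-Jordan hypothesis via $\eisp{a_0}{\xi},\eisp{a_0}{\eta}\neq0$ — either route works. The odd-relation half of your $\adim=2$ analysis is also recoverable: from $a_{-1}=a_1$ one gets that $\tau_0$ fixes both generators, hence all of $M=\langle a_0,a_1\rangle_{alg}$, hence $\eisp{a_0}{\eta}=0$ (characteristic not $2$), so $M$ is of Jordan type; equivalently, your observation that $\eisp{a_0}{1}\oplus\eisp{a_0}{0}\oplus\eisp{a_0}{\xi}$ is multiplicatively closed follows at once from the fusion table, since $\{0,1,\xi\}$ is closed under $\star$. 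So the "delicate point" you flag there is not actually delicate.

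The genuine gap is your dismissal of the even-relation case when $\adim=2$. You read property (3) of the linear-relation lemma literally at $k=1$ as "$\alpha_1 a_0=0$, impossible," but this is an artifact of the index misprint in that lemma's statement; its proof actually produces, for $\adim=2k$ with $k=1$, the relation $(\beta_2+\beta_3)(a_1+a_{-1})+2\beta_1 a_0=0$, and when $\beta_2+\beta_3\neq0$ this is a nontrivial \emph{even} relation $a_1+a_{-1}+\alpha a_0=0$ with $a_1\neq a_{-1}$. This case genuinely occurs (e.g.\ in a $2$-dimensional Jordan-type algebra $M=\mathbb{F}a_0\oplus\eisp{a_0}{\eta}$ one has $a_1+a_{-1}=2\lambda_1 a_0$ with $a_1\neq a_{-1}$), so it cannot be waved away; one must prove that it forces $M$ to be of Jordan type. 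The paper's argument here is: multiplying the relation by $a_0$ gives $0=2p_1+(\alpha(1-\eta)+2\eta)a_0$, the $G$-invariance of $p_1$ (and the fact that $a_0$ is not $G$-invariant when $\adim=2$) forces $\alpha(1-\eta)+2\eta=0$ and hence $p_1=0$, whence $\dim M\le2$ by Lemma~\ref{lem3} and $M$ is of Jordan type, contradicting the hypothesis. Without some version of this step your exclusion of $\adim=2$ is incomplete.
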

\begin{proof}
Since $M$ is not of Jordan type, $\eisp{a_0}{\eta},\eisp{a_0}{\xi}\neq0$.
Thus $\dim M\geq3$.

If $\adim=1$, then $M\cong\mathbb{F}$ and it is of Jordan type.
Hence it suffices to verify that when $\adim=2$, $M$ is of Jordan type. 

When $M$ satisfies an odd relation, $\tau_0(w)=w$ for all $w\in M=\langle a_0,a_1\rangle_{alg}$ and $\eisp{a_0}{\eta}=0$ since $a_1=a_{-1}=\tau_0(a_1)$.
Hence $M$ is of Jordan type in this case.
Therefore it suffices to consider the case when $M$ satisfies an even relation.
Then, $a_1+a_{-1}+\alpha a_0=0$ for some $\alpha\in\mathbb{F}$ and 
$$0=a_0(a_1+a_{-1}+\alpha a_0)=2p_1+(\alpha(1-\eta)+2\eta)a_0.$$
By the $G$-invariance of $p_1$, $\alpha(1-\eta)+2\eta=0$ and then $p_1=0$.
By Lemma \ref{lem3}, $\dim M<3$ and then $M$ is of Jordan type. 
\end{proof}

Hence it suffices to consider the case with $\adim=3$.
\begin{claim}
\label{lemo5}
\sl
$M$ satisfies an odd relation.
\end{claim}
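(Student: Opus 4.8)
Suppose, toward a contradiction, that $M$ satisfies an even relation. By Claim~\ref{lem32} we have $\adim=3$ and $\dim M\ge 3$, so the linear-relation lemma (the lemma following Lemma~\ref{lem1}), in its case (1) with $k=2$, gives after scaling a scalar $\alpha\in\mathbb{F}$ with $a_{-1}+a_2=\alpha(a_0+a_1)$. Applying $\tau_0$, $\theta$, and iterating $f_1=\theta\circ\tau_0$ expresses every $a_i$ as an explicit combination of $a_{-1},a_0,a_1$; in particular $\mathrm{Span}_{\mathbb{F}}\{a_{-1},a_0,a_1\}=\saxes$ has dimension $3$, and a short argument using the $G$-invariance of $p_1$ together with Lemma~\ref{lem3} shows $p_1\notin\saxes$ (otherwise $\tau_0$- and $\theta$-invariance force $p_1=0$). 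Hence $\{p_1,a_{-1},a_0,a_1\}$ is linearly independent, and the plan is to compare coefficients in this tuple throughout.

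First I would multiply the even relation by $a_0$ and simplify using $a_ia_{i\pm1}=p_1+\eta(a_i+a_{i\pm1})$ (and $p_{1,j}=p_1$); this yields $p_{2,0}=-(1+\alpha)p_1-(2\eta+\alpha)a_0$, and applying the flip $\theta$ (which fixes $p_1$ and interchanges $p_{2,0}$ and $p_{2,1}$) gives $p_{2,1}=-(1+\alpha)p_1-(2\eta+\alpha)a_1$. Then I would compute $a_0p_{2,1}$ in two ways: directly from this last formula using \eqref{multiaipij} with $i=1$ together with $a_0a_1=p_1+\eta(a_0+a_1)$; and from the lemma preceding Section~3 describing $a_0p_{2,1}$ (equation \eqref{multia0p21} when $\xi\ne2\eta$, and the $\xi=2\eta$ formula otherwise), after substituting the above expression for $p_{2,0}$ and the expression for $a_2+a_{-2}$ coming from the even relation. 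In the first expression the coefficients of $a_1$ and $a_{-1}$ differ by $-(2\eta+\alpha)\eta$, while in the second they are equal; since $\eta\ne0$ this forces $\alpha=-2\eta$, whence $p_{2,0}=p_{2,1}=(2\eta-1)p_1$. Matching the remaining coefficients then determines $\lambda_1$ as an explicit rational function of $\xi$ and $\eta$.

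To close the argument I would feed $\alpha=-2\eta$, the identity $p_{2,1}-p_{2,0}=0$, and the value of $\lambda_1$ into the precise form of $p_{2,1}-p_{2,0}$ given in that same lemma (the refinement of \eqref{p2linrel}): comparing the $a_{-1}$ coefficient there produces a second relation expressing $\lambda_1$ through $\xi$ and $\eta$. Eliminating $\lambda_1$ between the two expressions yields a polynomial identity $F(\xi,\eta)=0$, and one checks that under the standing hypotheses ($\xi,\eta\notin\{0,1\}$, $\xi\ne\eta$, $\eta\notin\{0,1,\xi\}$) this forces either $\eta\in\{0,1,\xi\}$ — excluded — or $(\xi,\eta)=(2,\tfrac12)$, in which case the forced value of $\lambda_1$ is $1$ and Step~\ref{propminf} makes $M$ a quotient of $\infalg$, contrary to our standing assumption. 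In every case we obtain a contradiction, so $M$ must satisfy an odd relation.

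I expect the main obstacle to be bookkeeping rather than anything conceptual: one must run the branch $\xi=2\eta$ separately (where the $p_{2,1}$-coefficient degenerates and the $\xi=2\eta$ form of the lemma replaces \eqref{multia0p21}), likewise the branch $2\xi=1$ where the formula for $\lambda_1$ must be read off differently, keep the $G$-action rewriting of $a_2,a_{-2},a_3,a_{-2}$ in terms of $a_{-1},a_0,a_1$ consistent, and finally verify that $F(\xi,\eta)$ has no admissible root other than those giving Jordan type or $\infalg$. Since $\chf{F}=5$ is already covered by the hypothesis of Step~\ref{propminf} (because $\adim=3\le5$), no separate characteristic analysis is needed beyond $\chf{F}\ne2$.
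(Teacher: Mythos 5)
Your opening moves are sound and essentially parallel to the paper's: the even relation $a_2+a_{-1}+\alpha(a_1+a_0)=0$, the consequence $p_{2,0}\in\mathbb{F}p_1+\mathbb{F}a_0$ from multiplying by $a_0$, the fact that $p_1\notin\saxes$, and the value $\alpha=-2\eta$ (the paper reads this off from the $\tau_1$-invariance of $p_{2,0}$, but your comparison of the antisymmetric part of the $a_{\pm1}$-coefficients of $a_0p_{2,1}$ also works). The gap is in the endgame. After $\alpha=-2\eta$ and $p_{2,0}=p_{2,1}=(2\eta-1)p_1$, each of your two remaining sources of information yields only \emph{one} scalar equation, linear in $\lambda_1$. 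In \eqref{multia0p21} both sides are, modulo $\mathbb{F}a_0$, scalar multiples of $2p_1+\eta(a_1+a_{-1})$ (the direct side because $a_0p_1$ has this form, the formula side because $2p_{2,0}+\eta(a_2+a_{-2})$ reduces to $(2\eta-1)(2p_1+\eta(a_1+a_{-1}))$ plus a multiple of $a_0$), so matching the $p_1$- and $(a_1+a_{-1})$-coefficients gives the same equation; and in the refinement of \eqref{p2linrel} the three coefficient equations in the basis $a_{-1},a_0,a_1$ reduce, after eliminating the free $\mathbb{F}(a_1-a_0)$ parameter, to a single relation. Eliminating $\lambda_1$ between two equations leaves one polynomial $F(\xi,\eta)=0$, whose zero locus is a curve, not a finite set, and it is not contained in the excluded locus: for example along $\eta=\tfrac14$ the two relations are simultaneously solvable for $\lambda_1$ whenever $\xi$ is a root of a cubic whose roots do not all lie in $\{0,1,\tfrac14,\tfrac12,2\}$. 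So your system is underdetermined and cannot force $(\xi,\eta)=(2,\tfrac12)$.

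What is missing is, first of all, the constraint obtained by multiplying the linear relation itself by $p_1$: by \eqref{multiaipij}, $0=p_1(a_2+a_{-1}+\alpha(a_1+a_0))\in\saxes+2(\alpha+1)(\xi-\eta)p_1$, and since $p_1\notin\saxes$ and $\xi\neq\eta$ this forces $\alpha=-1$, hence $\eta=\tfrac12$ and $\dim M=4$. Even then one is not finished: the paper must still set $p_1^2=\mu p_1$ (by $G$-invariance), compute the square of the $\xi$-eigenvector $p_1-(\lambda_1-\tfrac12)a_0+\tfrac14(a_1+a_{-1})$ and its product with the $0$-eigenvector, and invoke the fusion rules $\xi\star\xi\subset\{0,1\}$ and $\xi\star 0=\xi$ to pin down $\mu$, then $\lambda_1=1$ and $\xi=2$, at which point Step \ref{propminf} supplies the contradiction. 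These eigenvector computations extract strictly more from the fusion rule than the two generic formulas of the lemma you cite, and your argument cannot close without them.
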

\begin{proof}
Assume that $M$ satisfies an even relation.
Then there exists $\alpha\in\mathbb{F}$ such that $a_2+a_{-1}+\alpha(a_1+a_0)=0$.
Since
$$0=a_0(a_2+a_{-1}+\alpha(a_1+a_0))=p_{2,0}+(\alpha+1)p_1+(\alpha+2\eta)a_0,$$
$\alpha=-2\eta$ and $p_{2,0}=(2\eta-1)p_1$.
Then $p_1\notin\saxes$ because $p_1=0$ if $p_1\in\saxes$.
Thus
$$0=p_1(a_2+a_{-1}+\alpha(a_1+a_0))\in\saxes+2(\alpha+1)(\xi-\eta)p_1,$$
$\alpha=-1$, $\eta=\frac{1}{2}$ and $\dim M=4$.
By the $G$-invariance of ${p_1}^2$, there exists $\mu\in\mathbb{F}$ such that ${p_1}^2=\mu p_1$.
Since $\dim M=4$,
$$\mathbb{F}(p_1-(\lambda_1-\frac{1}{2})a_0+\frac{1}{4}(a_1+a_{-1}))=\eisp{a_0}{\xi},$$
$$\mathbb{F}(p_1-((1-\xi)\lambda_1-\frac{1}{2})a_0-\frac{2\xi-1}{4}(a_1+a_{-1}))=\eisp{a_0}{0}$$
and
\begin{eqnarray*}
&(&p_1-(\lambda_1-\frac{1}{2})a_0+\frac{1}{4}(a_1+a_{-1}))^2\\
&&\in\mathbb{F}a_0+ (\mu-2\xi\lambda_1+2\xi-\frac{1}{2})p_1-\frac{2\xi-1}{4}(2\lambda_1-1)(a_1+a_{-1}),
\end{eqnarray*}
$\mu=2(\xi+1)\lambda_1-\frac{4\xi+1}{2}$.
Since
\begin{eqnarray*}
&(&p_1-(\lambda_1-\frac{1}{2})a_0+\frac{1}{4}(a_1+a_{-1}))(p_1-((1-\xi)\lambda_1-\frac{1}{2})a_0-\frac{2\xi-1}{4}(a_1+a_{-1}))\\
&&=((\xi^2+\xi+2)\lambda_1-\xi^2-1)p_1\\
&&+(1-\xi)(((\xi-1)\lambda_1^2-\frac{\xi-2}{2}\lambda_1+\frac{\xi-1}{2})a_0+\frac{3-4\xi}{4}\lambda_1+\frac{\xi-1}{4})(a_1+a_{-1}))\\
&&\in \eisp{a_0}{\xi},
\end{eqnarray*}
$\lambda_1=1$ and $\xi=2$.
Thus $(\xi,\eta,\lambda_1)=(2,\frac{1}{2},1)$ in the case.
Hence $M$ satisfies an odd relation.
\end{proof}

Hence it suffices to verify the following claim.
\begin{claim}
\sl
Let $M$ satisfy an odd relation. Then $M$ is isomorphic to  $\threealg{\xi}{\eta}{0}$,  $\threealg{\xi}{\frac{1}{2}}{\alpha}$ or their quotients.
\end{claim}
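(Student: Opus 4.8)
The plan is to work inside the $3$-dimensional space $\saxes$ with ordered basis $a_{-1},a_0,a_1$ (Lemma \ref{lem1} with $\adim=3$), and to use the odd relation to pin down $a_2$ as an explicit linear combination of these three axes. Since $M$ satisfies an odd relation with $k=2$, we are in case (2) or (4) of Lemma 2.2; either way, after applying $\tau_0$ and $\theta$ and using the $G$-action described in Section 2, we get a relation of the form $a_2=-a_{-1}+\beta(a_1-a_0)+\text{(something symmetric)}$, which I would normalize so that $a_2\in\mathbb{F}a_{-1}+\mathbb{F}a_0+\mathbb{F}a_1$ with coefficients depending on one free parameter; call the coefficient of $a_0$ essentially $\alpha$ (matching the parameter in $\threealg{\xi}{\eta}{\alpha}$). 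Applying $\varphi_0$ to this relation, and recalling $\lambda_i=\varphi_0(a_i)$, gives a linear equation relating $\lambda_1$, $\lambda_2$ and $\alpha$; since $\theta(a_2)=a_{-1}$ forces $\lambda_2=\lambda_{-1}=\lambda_1$ (as $\tau_0$ fixes $\varphi_0$ and swaps $a_1\leftrightarrow a_{-1}$), this already constrains the parameters.

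Next I would compute $a_0a_1$, $a_0a_2$ and $a_{-1}a_1$ in this basis. The product $a_0a_1=p_1+\eta(a_0+a_1)$ by definition of $p_1$, so I need to express $p_1$ in the basis $\{a_{-1},a_0,a_1\}$; because $\dim(\saxes)=3$ and $M$ is not of Jordan type, one checks whether $p_1$ lies in $\saxes$ or not — Lemma \ref{lem3} says $p_1\neq0$, and if $p_1\in\saxes$ then $M=\saxes$ has dimension $3$, otherwise $\dim M=4$ and $p_1$ (equivalently $\hat q$) is a genuinely new basis vector. In the first case I'd use the eigenvector elements $x_1,y_1,z_1$ from Section 2.3: since $x_1a_0=\xi x_1$, $y_1a_0=\eta y_1$, $z_1a_0=0$, and these three plus $a_0$ would span a $4$-dimensional space unless there is a dependence, I can read off that one of $\eisp{a_0}{\xi}$, $\eisp{a_0}{\eta}$ is forced — but $M$ is not of Jordan type, so both are nonzero, forcing $\dim M=4$ and $p_1\notin\saxes$. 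So I'd set $\hat q=p_1-(\text{linear combination of }a_i)$ appropriately (following the pattern in the definition of $\threealg{\xi}{\eta}{\alpha}$, where $\hat a_i\hat a_{i+1}=\hat q+\frac{\xi-\eta}{2}(\cdots)+\eta(\hat a_i+\hat a_{i+1})$), and verify the four defining relations of $\threealg{\xi}{\eta}{\alpha}$ hold.

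The crux is the last, scalar relation $\hat q w=c\, w$ for all $w$, i.e.\ that $p_1$ (suitably shifted) acts as a scalar. This is where I expect the main obstacle: one must compute $p_1p_1$, $p_1a_i$, and $p_1\hat q$ and check consistency. By $G$-invariance of $p_1$, $p_1a_i$ is determined by $p_1a_0$ via \eqref{multiaipij}, and $\hat q$ is $G$-invariant too, so $\hat q a_i$ is constant in $i$; the Seress condition and the fusion rule (via Lemma \ref{lem2} applied to the eigenvector $\hat q$, once we know $a_i\in\eisp{\hat q}{c}$) then propagate scalar action from $a_0$ to all of $M$. Checking $a_0\in\eisp{\hat q}{c}$ is a direct eigenspace computation: decompose $\hat q$ along $\eisp{a_0}{0}\oplus\eisp{a_0}{\xi}\oplus\eisp{a_0}{\eta}\oplus\mathbb{F}a_0$, square and multiply using the fusion rule $\mathcal{F}(\xi,\eta)$, and compare. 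Along the way the equations will force either $\alpha=0$ (giving $\threealg{\xi}{\eta}{0}$, and then the closure/consistency conditions force $0,1,\xi,\eta$ distinct, or else a quotient — case (i) of \S3.2) or $\eta=\frac12$ (giving $\threealg{\xi}{\frac12}{\alpha}$, with the special quotients $\threealg{\xi}{\frac12}{-3}^\times$ and $\threealg{-1}{\frac12}{\alpha}^\times$ arising when extra linear dependences appear, e.g.\ $\chf{F}=3$ or the discriminant-type conditions), since $(\xi,\eta,\lambda_1)\neq(2,\frac12,1)$ has been excluded by Step \ref{propminf}. Finally, the excluded values $\xi\in\{0,1,\frac12\}$ come from requiring $2\xi\neq1$ (needed for $\hat q$ to be well-defined) and the eigenvalues to be genuinely distinct, and $\xi\notin\{\pm\frac13,\frac1{\sqrt3}\}$ in the quotient case comes from the denominator $3\xi-1$ and the condition $3\xi^2\neq1$ in the definition of $\threealg{\xi}{\frac{1-3\xi^2}{3\xi-1}}{0}^\times$.
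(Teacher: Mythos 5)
Your overall route is the paper's: use the odd relation to write $a_2=a_{-1}-\alpha(a_1-a_0)$, form the $G$-invariant element $q=p_1-\frac{\xi-\eta}{2}((\alpha+1)a_0+a_1+a_{-1})$, show it acts as a scalar on every $a_i$ by invariance and hence on all of $M$ by Lemma \ref{lem2}, and read off a surjection from $\threealg{\xi}{\eta}{\alpha}$. But there is a genuine gap at the point where you claim that $M$ not being of Jordan type forces $p_1\notin\saxes$ and $\dim M=4$. If $p_1\in\saxes$, then $M=\saxes$ is $3$-dimensional, and among the four eigenvectors $a_0\in\eisp{a_0}{1}$, $x_1\in\eisp{a_0}{\xi}$, $y_1\in\eisp{a_0}{\eta}$, $z_1\in\eisp{a_0}{0}$ one must vanish; you only consider the possibilities $x_1=0$ or $y_1=0$, but it can equally well be $z_1=0$, in which case $\eisp{a_0}{0}=0$ while $\eisp{a_0}{\xi}$ and $\eisp{a_0}{\eta}$ are both nonzero and $M$ is \emph{not} of Jordan type. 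This is not a vacuous case: it is precisely the $3$-dimensional quotients $\threealg{\xi}{\frac{1-3\xi^2}{3\xi-1}}{0}^{\times}$, $\threealg{-1}{\frac{1}{2}}{\alpha}^{\times}$ and $\threealg{\xi}{\frac{1}{2}}{-3}^{\times}$ that the statement explicitly includes, so your argument ``proves'' these do not occur. The paper avoids the case split entirely: it establishes $qa_i=\pi a_i$ and $q^2=\pi q$ without knowing whether $q=0$, concludes $M=\mathbb{F}q+\sum_{i=-1}^{1}\mathbb{F}a_i$, and the surjection from $\threealg{\xi}{\eta}{\alpha}$ then exists in both the $4$- and $3$-dimensional cases (with $q\mapsto 0$ in the latter).

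Two smaller points. First, the dichotomy that lands $M$ in one of the two named families rather than a general $\threealg{\xi}{\eta}{\alpha}$ is the identity $\alpha(2\eta-1)=0$; you defer it to the final consistency check, whereas it already falls out of the first computation $0=a_0(a_2-a_{-1}+\alpha(a_1-a_0))=p_{2,0}+(\alpha-1)p_1+\alpha(2\eta-1)a_0$, which also yields $p_{2,0}=(1-\alpha)p_1$. Without deriving it somewhere, your argument only reaches ``$M$ is a quotient of some $\threealg{\xi}{\eta}{\alpha}$.'' Second, $\theta(a_2)=a_{-1}$ does not give $\lambda_2=\lambda_{-1}$: since $\theta$ moves $a_0$, $\varphi_0\circ\theta\neq\varphi_0$. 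In fact the relation gives $\lambda_2=\lambda_1+\alpha(1-\lambda_1)$, so $\lambda_2=\lambda_1$ fails in general exactly in the $\alpha\neq0$ (i.e.\ $\eta=\frac{1}{2}$) family.
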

\begin{proof}
There exists $\alpha\in\mathbb{F}$ such that $a_2-a_{-1}+\alpha(a_1-a_0)=0$.
Since
$$0=a_0(a_2-a_{-1}+\alpha(a_1-a_0))=p_{2,0}+(\alpha-1)p_1+\alpha(2\eta-1)a_0,$$
$p_{2,0}=(1-\alpha)p_1$ and either $\alpha$ or $2\eta-1$ is $0$ .

Let $q=p_1-\frac{\xi-\eta}{2}((\alpha+1)a_0+a_1+a_{-1})$.
Then $qa_0=((1-\xi)\lambda_1-\frac{\xi-\eta}{2}\alpha-\frac{\eta+\xi}{2})a_0$.
Since $f(q)=q$ and $\tau_0(q)=q$, $qa_i=((1-\xi)\lambda_1-\frac{\xi-\eta}{2}\alpha-\frac{\eta+\xi}{2})a_i$ for all $i\in\mathbb{Z}$.
By Lemma \ref{lem2}, $q^2=((1-\xi)\lambda_1-\frac{\xi-\eta}{2}\alpha-\frac{\eta+\xi}{2})q$ and then $M=\mathbb{F}q+\sum_{i=-1}^1\mathbb{F}a_i$. 
Since 
$$q-(\lambda_1-\frac{\xi-\eta}{2}\alpha-\frac{\xi+\eta}{2})a_0+\frac{\xi}{2}(a_1+a_{-1})\in \eisp{a_0}{\xi},$$
\begin{eqnarray*}
\eisp{a_0}{0}&\oplus& \eisp{a_0}{1}=\mathbb{F}q\oplus\mathbb{F}a_0\\
&\ni&(q-(\lambda_1-\frac{\xi-\eta}{2}\alpha-\frac{\xi+\eta}{2})a_0+\frac{\xi}{2}(a_1+a_{-1}))^2\\
&\in&\mathbb{F}q\oplus\mathbb{F}a_0\\
&&+\xi((1-2\xi)\lambda_1+\frac{(\xi-2)(\xi-\eta)}{4}\alpha+\frac{3\xi^2+3\xi\eta-\xi-2\eta}{4})(a_1+a_{-1}).
\end{eqnarray*}
Thus $(1-2\xi)\lambda_1+\frac{(\xi-2)(\xi-\eta)}{4}\alpha+\frac{3\xi^2+3\xi\eta-\xi-2\eta}{4}=0$ or $a_1+a_{-1}-2\lambda_1a_0\in\eisp{a_0}{0}$.

If  $a_1+a_{-1}-2\lambda_1a_0\in\eisp{a_0}{0}$, then $\eisp{a_0}{\xi}=0$ and hence $M$ is of Jordan type.

So we may assume $(1-2\xi)\lambda_1+\frac{(\xi-2)(\xi-\eta)}{4}\alpha+\frac{3\xi^2+3\xi\eta-\xi-2\eta}{4}=0$.
Then there exists a surjective homomorphism from $\threealg{\xi}{\eta}{\alpha}$ to $M$ such that $\hat{a}_i\mapsto a_i$ and $\hat{q}\mapsto q$.
Since either $\alpha$ or $2\eta-1$ is $0$, $M$ is isomorphic to $\threealg{\xi}{\eta}{0}$, $\threealg{\xi}{\frac{1}{2}}{\alpha}$ or their quotients.
\end{proof}

In the case when $M$ is isomorphic to a quotient of $\threealg{\xi}{\eta}{0}$ or  $\threealg{\xi}{\frac{1}{2}}{\alpha}$, $q=0$ since $M$ is not of Jordan type.
Therefore $M$ is isomorphic to $\threealg{\xi}{\frac{1-3\xi^2}{3\xi-1}}{0}^{\times}$. $\threealg{\xi}{\frac{1}{2}}{-3}^{\times}$ or $\threealg{-1}{\frac{1}{2}}{\alpha}^{\times}$.
Since $\eta\notin\{0,1,\xi\}$, $\xi\notin\{\pm\frac{1}{3},\pm\frac{1}{\sqrt{3}}\}$ if $M$ is isomorphic to $\threealg{\xi}{\frac{1-3\xi^2}{3\xi-1}}{0}^{\times}$ and $\chf{F}\neq3$ if $M$ is isomorphic to $\threealg{-1}{\frac{1}{2}}{\alpha}^{\times}$.
Hence the proof of Step \ref{prop3} is completed.

\subsection{Proof of Step \ref{prop4e}}
Assume that $\adim=4$ and $M$ satisfies an even relation.
In this case, there exist $\alpha,\beta\in\mathbb{F}$ such that $a_2+a_{-2}+\alpha(a_1+a_{-1})+\beta a_0=0$.
Since 
\begin{eqnarray*}
0&=&a_0(a_2+a_{-2}+\alpha(a_1+a_{-1})+\beta a_0)\\
&=&2p_{2,0}+2\alpha p_1+(\eta(2\alpha-\beta+2)+\beta)a_0,
\end{eqnarray*}
$\eta(2\alpha-\beta+2)+\beta=0$ and $p_{2,0}=-\alpha p_1$.
If $p_1\in\saxes$, then, by the $G$-invariance of $p_1$, $p_1=0$ and it contradicts Lemma \ref{lem3}.
Thus we may assume that $p_1\notin\saxes$ and $M=\mathbb{F}p_i\oplus\saxes$.
Since
\begin{eqnarray*}
0&=&p_1(a_2+a_{-2}+\alpha(a_1+a_{-1})+\beta a_0)\\
&\in&\saxes+(\xi-\eta)(2\alpha+\beta+2)p_1,
\end{eqnarray*}
$\beta=-2(\alpha+1)$ and then $0=\eta(2\alpha-\beta+2)+\beta=2(\alpha+1)(2\eta-1)$.
Since $0=a_{-1}(a_2+a_{-2}+\alpha(a_1+a_{-1})+\beta a_0)$,
$a_{-1}a_2=(\alpha+1)^2p_1+\eta a_2+((2\alpha+1)\eta-\alpha)a_{-1}$.
Then, since $a_{-1}a_2$ is invariant under the flip, $\alpha(2\eta-1)=0$.
Hence $\eta=\frac{1}{2}$.
By the $G$-invariance of $p_1$, there exists $\mu\in\mathbb{F}$ such that ${p_1}^2=\mu p_1$.
Then,
\begin{eqnarray*}
&(&p_1-(\lambda_1-\frac{1}{2})a_0+\frac{1}{4}(a_1+a_{-1}))^2\\
&&\in \eisp{a_0}{0}\oplus \eisp{a_0}{1}=\mathbb{F}a_0\oplus\mathbb{F}(p_1-\frac{2\xi-1}{4}(a_1+a_{-1})
\end{eqnarray*}
and
\begin{eqnarray*}
&(&p_1-(\lambda_1-\frac{1}{2})a_0+\frac{1}{4}(a_1+a_{-1}))^2\\
&&\in\mathbb{F}a_0+(\mu-2\xi\lambda_1+2\xi-\frac{\alpha+4}{8})p_1-\frac{2\xi-1}{4}(2\lambda_1-1+\frac{\alpha}{4})(a_1+a_{-1}).
\end{eqnarray*}
Hence $\mu=\frac{3\alpha}{8}+2(\xi+1)\lambda_1-2\xi-\frac{1}{2}$.
Then,
\begin{eqnarray*}
(&p_1&-(\lambda_1-\frac{1}{2})a_0+\frac{1}{4}(a_1+a_{-1}))(p_1-((1-\xi)\lambda_1-\frac{1}{2})a_0+\frac{2\xi-1}{4}(a_1+a_{-1}))\\
&=&(\frac{\xi+1}{4}\alpha+(\xi^2+\xi+2)\lambda_1-\xi^2-1)p_1\\
&&+(1-\xi)((\xi-1)\lambda_1^2-\frac{\xi-2}{2}\lambda_1+\frac{\xi-1}{2}+\frac{(2\xi-1)\alpha}{8})a_0\\
&&+(1-\xi)(\frac{3-4\xi}{4}\lambda_1-\frac{2\xi-1}{16}\alpha+\frac{\xi-1}{4})(a_1+a_{-1})\\
&\in&\eisp{a_0}{\xi}=\mathbb{F}(p_1-((1-\xi)\lambda_1-\frac{1}{2})a_0+\frac{2\xi-1}{4}(a_1+a_{-1})).
\end{eqnarray*}
Therefore $(\lambda_1,\alpha)=(1,-4)$ or $(\lambda_1,\xi)=(1,2)$.
Since $(\xi,\eta,\lambda_1)\neq(2,\frac{1}{2},1)$, $(\lambda_1,\alpha)=(1,-4)$.
Thus there exists a surjective homomorphism from $\fivetwo{\xi}$ to $M$ such that $\hat{a}_i\mapsto a_i$ and $\hat{p}_1\mapsto p_1$ and its kernel is $\mathbb{F}(\hat{a}_2+\hat{a}_{-2}-4(\hat{a}_1+\hat{a}_{-1})+6\hat{a}_0)$.
Hence $M$ is isomorphic to $\fivetwo{\xi}^{\times}$ and $\xi\neq\frac{1}{2}$.

\subsection{Proof of Step \ref{prop4o}}
Assume that $\adim=4$ and $M$ satisfies an odd relation.
Let $\alpha$ be an element of $\mathbb{F}$ such that $a_2-a_{-2}+\alpha(a_1-a_{-1})=0$.
\begin{claim}
\sl
\label{lem41}
One of the following holds:
\begin{itemize}
\item[\rm{(i)}]$\alpha=0$.
\item[\rm{(ii)}]$\alpha=1$ and $p_{2,0}-p_{2,1}=(2\eta-1)(a_{-1}-a_2)$.
\item[\rm{(iii)}]$2\eta=1$ and $p_{2,0}=p_{2,1}$.
\end{itemize}
\end{claim}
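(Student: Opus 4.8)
The plan is to compute the two distance‑$4$ products $a_{-2}a_2$ and $a_{-1}a_3$ in terms of $p_1$, $p_{2,0}$, $p_{2,1}$ and the axes, and then to read the trichotomy off from the symmetries that these products are forced to possess.

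First I would rewrite the given odd relation as $a_2-a_{-2}=-\alpha y_1$, where $y_i:=a_i-a_{-i}$, and apply the flip to it to obtain $a_3=a_{-1}+\alpha(a_0-a_2)$. One may assume $y_1\neq0$: otherwise, applying the $f_k$ to the relation gives the recursion $y_{k+2}=y_{k-2}-\alpha(y_{k+1}-y_{k-1})$, so $y_1=0$ forces $y_k=0$ for all $k$; then $\tau_0$ fixes every $a_k$ and hence all of $M$ (which is generated by the $a_i$), whence $\eisp{a_0}{\eta}=0$ and $M$ is a primitive axial algebra of Jordan type, a case dealt with separately.

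Next I would carry out the expansions, using only the product rules $a_ia_{i+1}=p_1+\eta(a_i+a_{i+1})$, $a_{-1}a_1=p_{2,1}+\eta(a_{-1}+a_1)$ and $a_0a_2=p_{2,0}+\eta(a_0+a_2)$ recorded in Section 2. Expanding $a_{-1}a_2$ through $a_2=a_{-2}-\alpha(a_1-a_{-1})$ gives the distance‑$3$ element
\[
a_{-1}a_2-\eta(a_{-1}+a_2)=p_1-\alpha p_{2,1}+\alpha(1-2\eta)a_{-1}.
\]
Feeding this into the identity obtained by multiplying the relation by $a_2$ (and using $a_2^2=a_2$) yields
\[
a_{-2}a_2-\eta(a_{-2}+a_2)=(1-2\eta)(a_2-\alpha^2a_{-1})+\alpha^2p_{2,1},
\]
and multiplying $a_3=a_{-1}+\alpha(a_0-a_2)$ by $a_{-1}$ yields, in exactly the same way,
\[
a_{-1}a_3-\eta(a_{-1}+a_3)=(1-2\eta)(1-\alpha^2)a_{-1}+\alpha^2p_{2,1}.
\]

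Finally I would invoke two symmetries. The element $a_{-2}a_2-\eta(a_{-2}+a_2)$ is manifestly $\tau_0$‑invariant, and $\tau_0$ fixes $p_{2,1}$ and interchanges $a_2\leftrightarrow a_{-2}$ and $a_1\leftrightarrow a_{-1}$; comparing the second display with its $\tau_0$‑image and using $a_2-a_{-2}=-\alpha y_1$ gives $(1-2\eta)\alpha(\alpha-1)y_1=0$, hence (since $y_1\neq0$) $\alpha=0$, $\alpha=1$, or $2\eta=1$. The flip sends $a_{-2}a_2-\eta(a_{-2}+a_2)$ to $a_{-1}a_3-\eta(a_{-1}+a_3)$ and $p_{2,1}$ to $p_{2,0}$; comparing the $\theta$‑image of the second display with the third gives $\alpha^2\bigl((p_{2,0}-p_{2,1})-(1-2\eta)(a_2-a_{-1})\bigr)=0$, so that whenever $\alpha\neq0$ we obtain $p_{2,0}-p_{2,1}=(2\eta-1)(a_{-1}-a_2)$. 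Combining: if $\alpha=0$ we are in case (i); if $\alpha\neq0$ then this identity holds and moreover $\alpha=1$ or $2\eta=1$, giving case (ii) or case (iii) respectively (in case (iii) the identity reads $p_{2,0}=p_{2,1}$). The main effort is the bookkeeping in the middle step: one must consistently rewrite every axis $a_k$ with $k\notin\{-1,0,1,2\}$ through the relation and its translates before collecting terms, and verify that each product rule and each invariance property of $p_1$, $p_{2,0}$, $p_{2,1}$ used is among those established in Section 2.
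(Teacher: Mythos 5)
Your proof is correct: I checked the three displayed identities for $a_{-1}a_2$, $a_2a_{-2}$ and $a_{-1}a_3$ against the product rules of Section~2 and they are exact, and the two symmetry comparisons do yield $(1-2\eta)\alpha(\alpha-1)y_1=0$ and $\alpha^2\bigl(p_{2,0}-p_{2,1}-(1-2\eta)(a_2-a_{-1})\bigr)=0$, which together give the stated trichotomy. Your opening move coincides with the paper's (expanding $a_{-1}a_2$ through the odd relation), but the two arguments then diverge: the paper obtains $p_{2,0}-p_{2,1}=(2\eta-1)(a_{-1}-a_2)$ (for $\alpha\neq0$) directly from the flip-invariance of $a_{-1}a_2$, and then pins down $\alpha=1$ or $2\eta=1$ by invoking the fact that $p_{2,1}-p_{2,0}$ lies in $\saxes$ --- i.e.\ relation (\ref{p2linrel}), which rests on the fusion-rule computation with $x_1x_1-z_1z_1$ --- together with its $\tau_0$-invariance and $\theta$-anti-invariance, which force it to be a multiple of $a_2-a_{-1}+(\alpha-1)(a_1-a_0)$. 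You instead push the expansion one step further to the distance-$4$ products $a_2a_{-2}$ and $a_{-1}a_3$ and read both constraints off their $\tau_0$- and $\theta$-symmetries, so you never need the lemma behind (\ref{p2linrel}); this makes your version slightly longer in bookkeeping but more self-contained, using only the adjacency products, the definitions of $p_{2,0},p_{2,1}$, and the action of $G$ on axes. (Two cosmetic remarks: $y_1\neq0$ is immediate from Lemma \ref{lem1} with $\adim=4$, so your detour through the Jordan-type case is unnecessary; and your case split is a disjunction, so the non-exclusivity of (ii) and (iii) when $2\eta=1$ and $\alpha=1$ is harmless.)
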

\begin{proof}
$a_{-1}a_2=p_1-\alpha p_{2,1}+\eta a_2+((1-2\eta)\alpha+\eta)$ in this case.
Since $a_{-1}a_2$ is invariant under the flip, 
$\alpha(p_{2,0}-p_{2,1}+(2\eta-1)(a_2-a_{-1}))=0$.
So $a_2=a_{-2}$ or $p_{2,0}-p_{2,1}=(2\eta-1)(a_{-1}-a_2)$.

Since $p_{2,1}-p_{2,0}\in\saxes$ and $p_{2,1}-p_{2,0}=\tau_0(p_{2,1}-p_{2,0})=-\theta(p_{2,1}-p_{2,0})$,
$p_{2,1}-p_{2,0}=\delta(a_2-a_{-1}+(\alpha-1)(a_1-a_0))$ for some $\delta\in\mathbb{F}$.
Hence if $p_{2,0}-p_{2,1}=(2\eta-1)(a_{-1}-a_2)$, then $\alpha=1$ or $2\eta=1$.
\end{proof}

First, we assume $\alpha=0$.
\begin{claim}
\label{lemo4}
\sl 
$\dim M\leq5$.
\end{claim}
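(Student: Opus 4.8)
The plan is to show $M=\saxes+\mathbb{F}p_1$, so that $\dim M\le\adim+1=5$ by Lemma~\ref{lem1}. First, since $\alpha=0$ the defining relation reads $a_2=a_{-2}$; applying $\theta$ and then $\tau_1$ gives $a_3=a_{-1}$ and $a_4=a_0$, and applying powers of $\tau_1\circ\tau_0$ yields $a_{i+4}=a_i$ for all $i\in\mathbb{Z}$. Hence $\saxes=\mathrm{Span}_{\mathbb{F}}\{a_{-1},a_0,a_1,a_2\}$, and since $M$ (being generated by $a_0,a_1$) is spanned by $\saxes$ together with the elements $p_{i,j}$, periodicity and $p_{i,j}=p_{-i,i+j}$ leave only $p_{1,0},p_{1,1},p_{1,2},p_{1,3},p_{2,0},p_{2,1}$ to control; moreover $p_{2,1}-p_{2,0}\in\saxes$ by \eqref{p2linrel} together with $a_3-a_{-2}=a_{-1}-a_2$. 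The key preliminary step is to collapse the $p_{1,j}$'s: the computation in the proof of Claim~\ref{lem41} specialized to $\alpha=0$ gives $p_{1,2}=p_{1,0}$, and I would then use the elements $x_1,x_3\in\eisp{a_0}{\xi}$: using $a_{-3}=a_1$, $a_3=a_{-1}$, $\lambda_3=\lambda_1$ and $p_{3,0}=p_{1,-1}$ one gets $x_3-x_1=p_{1,-1}-p_{1,0}=\tau_0(p_{1,0})-p_{1,0}\in\eisp{a_0}{\eta}$, so $x_3-x_1\in\eisp{a_0}{\xi}\cap\eisp{a_0}{\eta}=0$, whence $p_{1,-1}=p_{1,0}$ and, with periodicity, $p_{1,j}=p_1$ for all $j$. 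Then $p_1$ is fixed by $\tau_0$ and $\theta$, hence is $G$-invariant; by \eqref{multiaipij} and $f_i(p_1)=p_1$ this gives $a_ip_1\in\mathbb{F}p_1+\saxes$ for every $i$, and the same reasoning shows $\eisp{a_0}{\eta}=\mathbb{F}(a_1-a_{-1})$ and $M=\saxes+\mathbb{F}p_1+\mathbb{F}p_{2,0}$, so $\dim M\le6$.

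It then remains to show $p_{2,0}\in\saxes+\mathbb{F}p_1$, which gives $\dim M\le5$. If $p_1\in\saxes$ then $a_0a_1=p_1+\eta(a_0+a_1)\in\saxes$, so $M=\saxes$ and $\dim M=4$ (cf.\ Lemma~\ref{lem3}); so assume $p_1\notin\saxes$. If $\xi=2\eta$, the inclusion displayed just before \eqref{multia0p21} already gives $p_{2,0}\in\saxes+\mathbb{F}p_1$, and we are done.

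For $\xi\neq2\eta$ (and $p_1\notin\saxes$) I would argue by contradiction: suppose $\dim M=6$, i.e.\ $p_1$ and $p_{2,0}$ are linearly independent modulo $\saxes$. Comparing the two evaluations of $a_0(p_{2,1}-p_{2,0})$ — via \eqref{multia0p21} and \eqref{multiaipij} on one side, and via the explicit formula for $p_{2,1}-p_{2,0}$ together with $a_0a_i=p_{|i|,0}+\eta(a_0+a_i)$ and $p_{1,-1}=p_1$ on the other — and matching the coefficient of $p_{2,0}$ yields a polynomial identity in $\xi,\eta,\lambda_1$. Further identities come from the fusion relations $x_1^2,\,x_1x_2,\,x_2^2\in\eisp{a_0}{0}\oplus\eisp{a_0}{1}$, equivalently from the $G$-invariance of $p_1^2$, $p_1p_{2,0}$, $p_{2,0}^2$ (using also $a_0p_{2,1}$ from \eqref{multia0p21} and the value of $a_0(p_{2,0}+p_{2,-2})$). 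The expectation is that this system forces $(\xi,\eta,\lambda_1)=(2,\tfrac{1}{2},1)$, or is inconsistent outright, and since that triple is excluded by the standing hypotheses we conclude $\dim M\le5$.

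I expect the last paragraph to be the main obstacle: one must extract enough independent relations from the fusion rules for a putative $6$-dimensional $M$ and verify that their only common solution is the forbidden triple — essentially a longer version of the computation performed in the proof of Claim~\ref{lemo5} for the $3$-dimensional even-relation case. Everything else (the periodicity of the axes, the eigenspace-intersection argument collapsing the $p_{1,j}$'s to a single $G$-invariant element $p_1$, and the reduction to the three-element spanning set $\{a_{-1},a_0,a_1,a_2\}\cup\{p_1\}\cup\{p_{2,0}\}$) is routine.
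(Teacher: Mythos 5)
Your outline starts along the same lines as the paper (the case $\xi=2\eta$ via the lemma preceding \eqref{multia0p21}, and the aim of excluding $\dim M\geq6$ for $\xi\neq2\eta$ by forcing the forbidden triple $(\xi,\eta,\lambda_1)=(2,\tfrac12,1)$), but the decisive step is missing: in the $\xi\neq2\eta$ case you only list which identities you would compare and then state the ``expectation'' that the resulting system forces $(2,\tfrac12,1)$ or is inconsistent. That is precisely the nontrivial content of the claim, and it is not something that can be waved through: the paper settles it by first extracting $\mu$ and $\lambda_1$ from the coefficients of $p_1$ and $p_{2,0}$ in $a_0p_{2,1}$, then introducing the specific element
$q=p_1+\frac{\xi-2\eta}{4\eta}p_{2,0}-\frac{\xi-\eta}{2}(a_1+a_{-1})-\frac{(\xi-2\eta)(\xi-\eta)}{4\eta}(a_2+a_0)$,
showing via Lemma \ref{lem2} that $q$ acts as a scalar $\pi$, and finally using one fusion membership (a square lying in $\eisp{a_0}{0}\oplus\eisp{a_0}{1}$) to pin down $\eta=\tfrac12$, $\xi=2$, $\lambda_1=1$. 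Until you actually carry out such a computation (or an equivalent one), the claim is not proved.

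There is a second gap in the reduction itself: your bound $\dim M\leq6$ rests on the assertion that $M$, being generated by $a_0,a_1$, is spanned by $\saxes$ together with the $p_{i,j}$, and likewise your statements $\eisp{a_0}{\eta}=\mathbb{F}(a_1-a_{-1})$ and $M=\saxes+\mathbb{F}p_1+\mathbb{F}p_{2,0}$ (and, in the side case $p_1\in\saxes$, the conclusion $M=\saxes$). Closure of $\saxes+\mathbb{F}p_1+\mathbb{F}p_{2,0}$ under multiplication is exactly what is \emph{not} known at this stage: \eqref{multiaipij} and \eqref{multia0p21} control $a_ip_1$ and $a_ip_{2,j}$, but nothing yet places $p_1^2$, $p_1p_{2,0}$ or $p_{2,0}^2$ in that subspace, and such ``2-closedness'' is not a general property of these algebras. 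The paper never needs this spanning statement; it argues inside the $a_0$-eigenspace decomposition and through the scalar-acting element $q$. (Your eigenspace-intersection argument collapsing the $p_{1,j}$ is correct but redundant: Section 2.3 already records that $p_{1,j}$ is the single $G$-invariant element $p_1$ and that $p_{2,j}$ depends only on the parity of $j$.)
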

\begin{proof}
If $\xi=2\eta$, then $\dim M\leq5$. 

Thus it suffices to verify that $(\xi,\eta,\lambda_1)=(2,\frac{1}{2},1)$ if $\xi\neq2\eta$ and $\dim M\geq6$.
By the invariance, there exist $\mu\in\mathbb{F}$ such that $p_{2,1}-p_{2,0}=\mu(a_2-a_1-a_{-1}-a_0)$.
Then, by the coefficient of $p_1$ and $p_{2,0}$ in $a_0p_{2,1}$, $\mu=\frac{-(\xi-\eta)(\xi-4\eta)}{\xi-2\eta}$ and $\lambda_1=\frac{2\xi^2+8\xi\eta-4\eta^2-\xi-2\eta}{4(2\xi-1)}$.
Thus $\xi\neq\frac{1}{2}$ and then $\lambda_2=\frac{\xi}{2}$ by the structure of $\langle a_0,a_2\rangle_{alg}$.
Set 
$$q=p_1+\frac{\xi-2\eta}{4\eta}p_{2,0}-\frac{\xi-\eta}{2}(a_1+a_{-1})-\frac{(\xi-2\eta)(\xi-\eta)}{4\eta}(a_2+a_0).$$
Then $qa_i=\pi a_i$ and $qq=\pi q$ where $\pi=(1-\xi)\lambda_1+\frac{-\xi^3-8\eta^2-\xi^2+2\xi\eta+2\xi^2\eta}{8\eta}$.
Since 
$$(q-(\lambda_1+\frac{-\xi^4+2\xi\eta-8\eta^2}{8\eta})a_0+\frac{\xi}{2}(a_1+a_{-1})+\frac{\xi(\xi-2\eta)}{4\eta}a_2)^2\in \eisp{a_0}{0}\oplus \eisp{a_0}{1},$$
$\eta=\frac{1}{2}$, $\xi=2$ and $\lambda_1=1$.
Hence $\dim M\leq5$.
\end{proof}

\begin{claim}
\sl
$\eisp{a_0}{\xi}$ is of dimension $1$.
\end{claim}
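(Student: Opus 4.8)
The idea is to locate $\eisp{a_0}{\xi}$ explicitly, using the $\xi$-eigenvectors $x_i$ of Section 2 and the fact that $M$ is small in this case. First I would exploit the relation $\alpha=0$, i.e.\ $a_2=a_{-2}$: applying the elements of $G$ to it one finds that the sequence $(a_i)_{i\in\mathbb{Z}}$ is $4$-periodic, so that $G\cong D_8$, $\saxes=\mathrm{Span}_{\mathbb{F}}\{a_0,a_1,a_2,a_3\}$ is $4$-dimensional, $a_{-1}=a_3$, $a_{-2}=a_2$, $\lambda_3=\lambda_1$, and $p_{-1,0}=p_{3,0}=p_{1,0}$. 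From the defining formulas for the $x_i$ this gives $x_0=0$ and $x_3=x_1$, so the only $x_i$ that can be non-zero are $x_1$ and $x_2$. Since $x_ia_0=\xi x_i$, each $x_i$ is $\xi$ times the $\eisp{a_0}{\xi}$-component of $a_i$; hence the $\eisp{a_0}{\xi}$-component of every $a_i$ lies in $\mathbb{F}x_1+\mathbb{F}x_2$, and the same holds for $p_{1,0}$ and $p_{2,0}$ because $x_1-p_{1,0}$ and $x_2-p_{2,0}$ lie in $\saxes$ (using $a_{-1}=a_3$, $a_{-2}=a_2$).

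Next I would show $\eisp{a_0}{\xi}=\mathbb{F}x_1+\mathbb{F}x_2$, hence $\dim\eisp{a_0}{\xi}\le 2$. By the computations of Section 2, $M=\langle a_0,a_1\rangle_{alg}$ is spanned by the $a_i$ together with $p_{1,0}$, $p_{2,0}$ and $p_{2,1}$; by \eqref{p2linrel}, after the collapses $a_{-1}=a_3$, $a_{-2}=a_2$ the right-hand side lies in $\saxes$, so $p_{2,1}-p_{2,0}\in\saxes$ and $M=\saxes+\mathbb{F}p_{1,0}+\mathbb{F}p_{2,0}\subseteq\saxes+\mathbb{F}x_1+\mathbb{F}x_2$. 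Therefore the image of $M$ under the projection onto $\eisp{a_0}{\xi}$ in the decomposition of condition (1), which is all of $\eisp{a_0}{\xi}$, is contained in $\mathbb{F}x_1+\mathbb{F}x_2$; since $x_1,x_2\in\eisp{a_0}{\xi}$, equality holds. Moreover $\dim M\le5$ by Claim \ref{lemo4}, so there is a linear relation between $p_{1,0}$ and $p_{2,0}$ modulo $\saxes$; if $p_{1,0}\notin\saxes$ this forces $p_{2,0}\in\mathbb{F}p_{1,0}+\saxes$, and if $p_{1,0}\in\saxes$ then $\dim M=4$ and $p_{2,0}\in\saxes$ already (and $p_{1,0}\in\saxes$ does occur, e.g.\ for the quotients of $\rmfour_1$, so this case cannot simply be discarded).

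It remains to prove $x_2\in\mathbb{F}x_1$. The plan is to use the multiplication of the $p_{i,j}$: the identity $a_0(x_1x_1-z_1z_1-\varphi_0(x_1x_1)a_0)=0$ — the source of the \eqref{multiaipij}-type relations and, combined with \eqref{p2linrel}, of the reduction of $p_{2,0}$ — shows that $p_{2,0}$, written over $p_{1,0}$ and the $a_i$, has coefficient $-\eta$ on $a_2$ and no further $a_2$-contribution (this is exactly what the corresponding inclusion says when $\xi=2\eta$). Substituting this into $x_2=p_{2,0}-(\lambda_2-\eta)a_0+\eta a_2$, the $a_2$-terms cancel, so $x_2$ is a combination of $p_{1,0}$, $a_0$, $a_1$ and $a_3$; projecting onto $\eisp{a_0}{\xi}$ and using that the $\eisp{a_0}{\xi}$-components of $p_{1,0},a_1,a_3$ are scalar multiples of $x_1$ while that of $a_0$ is $0$, one obtains $x_2\in\mathbb{F}x_1$. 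Then $\eisp{a_0}{\xi}=\mathbb{F}x_1$, which is non-zero since $M$ is not of Jordan type; hence $\dim\eisp{a_0}{\xi}=1$.

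The main obstacle is the last step: one must carry out the fusion-rule computation of $a_0(x_1x_1-z_1z_1-\varphi_0(x_1x_1)a_0)$ (equivalently of $a_0p_{2,1}$ and of the elements $a_0(p_{2,j}+p_{2,-j})$) accurately enough to read off the $a_2$-coefficient of the reduction of $p_{2,0}$, and to treat separately the cases $\xi=2\eta$ and $\xi\neq2\eta$ (in the first, $p_{2,1}$ does not occur in $x_1x_1-z_1z_1-\varphi_0(x_1x_1)a_0$ and one uses the explicit inclusion giving $p_{2,0}$ directly; in the second, one uses the formula for $a_0p_{2,1}$), as well as the cases $\dim M=5$ and $\dim M=4$. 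The remaining parts of the argument are routine linear algebra in the $4$-periodic basis $\{a_0,a_1,a_2,a_3\}$.
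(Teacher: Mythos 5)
Your reduction of the claim to ``$x_2\in\mathbb{F}x_1$'' is sound: with $a_2=a_{-2}$ the axes are $4$-periodic, $x_0=0$, $x_3=x_1$, every $a_i$ and every $p_{i,j}$ has $\eisp{a_0}{\xi}$-component in $\mathbb{F}x_1+\mathbb{F}x_2$, and Claim \ref{lemo4} gives $M=\saxes$ or $M=\saxes+\mathbb{F}v$ with $v\in\{p_1,p_{2,0},p_{2,1}\}$, so indeed $\eisp{a_0}{\xi}=\mathbb{F}x_1+\mathbb{F}x_2$. Your subcase $\xi=2\eta$ is also fine, since there the lemma in Section 2.3 really does give $p_{2,0}+\frac{\eta}{2}(a_2+a_{-2})\in\mathbb{F}p_1+\sum_{i=-1}^{1}\mathbb{F}a_i$, so the $a_2$-terms in $x_2=p_{2,0}-(\lambda_2-\eta)a_0+\eta a_2$ cancel. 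The gap is the generic subcase $\xi\neq2\eta$. There you assert that $p_{2,0}$, written over $p_1$ and the axes, has $a_2$-coefficient $-\eta$, but the formula you invoke, \eqref{multia0p21}, only expresses the \emph{product} $a_0p_{2,1}$ through $a_0$, $p_1$, $p_{2,0}$ and axes; it does not place $p_{2,0}$ itself in $\mathbb{F}p_1+\saxes$, let alone fix its $a_2$-coefficient. Since, after projecting onto $\eisp{a_0}{\xi}$, the statement ``$p_{2,0}\in\mathbb{F}p_1+\saxes$ with $a_2$-coefficient $-\eta$'' is equivalent to $x_2\in\mathbb{F}x_1$, i.e.\ to the claim itself, your argument assumes at its decisive step exactly what is to be proved. (That this coefficient equals $-\eta$ in all the surviving algebras, e.g.\ via $r_{2,0}$ in $\fourtwo{\xi}{\eta}{\mu}$, is an output of the classification, not an available input.)

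What is missing is precisely the content of the paper's proof: assuming $\dim\eisp{a_0}{\xi}=2$, one takes a second $\xi$-eigenvector of the form $a_2+\mu(a_1+a_{-1})-(2\mu\lambda_1+\lambda_2)a_0$, notes that $\langle a_0,a_2\rangle_{alg}$ is then a $2$-dimensional primitive axial algebra of Jordan type $\xi$, applies Lemma \ref{lemhall} to force $\xi\in\{-1,\frac{1}{2}\}$ when $\mu=0$ and to pin down $\lambda_1$, $\lambda_2$, $\mu$ and $\eta$ when $\mu\neq0$, and then eliminates each case by explicit products of eigenvectors (using the Seress condition to compute ${p_1}^2$ and the fusion rule $\eisp{a_0}{\xi}\eisp{a_0}{\xi}\subset\eisp{a_0}{0}\oplus\eisp{a_0}{1}$) until the parameter constraints become contradictory. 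Unless you carry out an equivalent amount of computation to produce the $a_2$-coefficient of $p_{2,0}$ when $\xi\neq2\eta$, the proof is incomplete at the step you yourself flag as ``the main obstacle.''
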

\begin{proof}
Assume $\dim \eisp{a_0}{\xi}=2$.
If $a_1+a_{-1}-2\lambda_1a_0\in\eisp{a_0}{\xi}$, then $p_1=0$ by the invariance.
Hence there exists $\mu\in\mathbb{F}$ such that $a_2+\mu(a_1+a_{-1})-(2\mu\lambda_1+\lambda_2)a_0\in \eisp{a_0}{\xi}$ and $p_1\notin\saxes$.
Then $a_0a_2=-2\mu p_1+((1-\xi)(2\mu\lambda_1+\lambda_2)-2\mu\eta)a_0+\xi a_2+\mu(\xi-\eta)(a_1+a_{-1})$
and $(1-\xi)(2\mu\lambda_1+\lambda_2)=2\mu\eta+\xi$.

Let us consider the case with $\mu=0$.
Then $\langle a_0,a_2\rangle_{alg}$ is a $2$-dimensional primitive axial algebra of Jordan type $\xi$.
So, by Lemma \ref{lemhall}, $\xi=-1$ or $\frac{1}{2}$.
In the case with $\xi=-1$, 
\begin{eqnarray*}
&&(a_2+\frac{1}{2}a_0)(p_1-(\lambda_1-\eta)a_0+\frac{\eta}{2}(a_1+a_{-1}))\\
&&=-\frac{3}{4}(2p_1-2(\lambda_1-\eta)a_0+\eta(a_1+a_{-1})+4(\lambda_1-\eta)a_2)\\
&&\in \eisp{a_0}{0}\oplus \eisp{a_0}{1}=\mathbb{F}(p_1+\frac{\eta+1}{2}(a_1+a_{-1}))\oplus\mathbb{F}a_0.
\end{eqnarray*}
Therefore $\mathbb{F}$ is of characteristic $3$ and then $-1=\frac{1}{2}$.

So we may assume  $\xi=\frac{1}{2}$.
Since 
\begin{eqnarray*}
&(&a_0a_1)(p_1-(\frac{\lambda_1}{2}-\eta)a_0+\frac{2\eta-1}{4}(a_1+a_{-1}))\\
&&=(a_0(a_1(p_1-(\frac{\lambda_1}{2}-\eta)a_0+\frac{2\eta-1}{4}(a_1+a_{-1})))
\end{eqnarray*}
by the Seress condition,
\begin{eqnarray*}
{p_1}^2=(\frac{\lambda_1}{2}+2\eta^2-2\eta)p_1&&+(\frac{1-2\eta}{4}\lambda_1+\eta^3-\frac{\eta}{4})a_0\\
&&+(\frac{1-2\eta}{8}\lambda_1+\eta^3-\frac{\eta^2}{2})(a_1+a_{-1})+(\eta^3-\eta^2+\frac{\eta}{4})a_2.
\end{eqnarray*}
By the $G$-invariance of $p_1$, $\lambda_1=2\eta$.
Therefore
\begin{eqnarray*}
(p_1-\eta a_0+\frac{\eta}{2}(a_1+a_{-1}))^2&=&-\eta p_1-\frac{2\eta^2-\eta}{4}(a_0+a_1+a_2+a_{-1})\\
&&\in \eisp{a_0}{0}\oplus \eisp{a_0}{1}.
\end{eqnarray*}
But it cannot be true because $\eta\neq0,\frac{1}{2}$.

So, we may assume $\mu\neq0$.
Then, $\langle a_0,a_2\rangle_{alg}$ is a $2$-dimensional primitive axial algebra of Jordan type $\xi$.
Set
$$q=-\frac{1}{2\mu}(a_0a_2-\xi(a_2+a_0))=p_1-\frac{\xi-\eta}{2}(a_1+a_{-1}).$$
By the structure of $\langle a_0,a_2\rangle_{alg}$, 
$$qw=-\frac{1}{2\mu}((1-\xi)\lambda_2-\xi)w=((1-\xi)\lambda_1-\eta)w$$
for all $w\in\langle a_0,a_2\rangle_{alg}$.
Thus $q^2=((1-\xi)\lambda_1-\eta)q$ and then
\begin{eqnarray*}
{p_1}^2&=&((1-\xi)\lambda_1-\eta+(\mu+2)(\xi-\eta))p_1-\frac{(\xi-\eta)^2}{2}((\xi-\eta)\mu-2\eta)(a_0+a_2)\\
&&+(\xi-\eta)(\frac{1-\xi}{2}\lambda_1+\frac{\eta}{2}-\frac{(2\xi+1)(\xi-\eta)}{4}+\eta(\xi-\eta-1))(a_1+a_{-1}).
\end{eqnarray*}
By the $G$-invariance of $p_1$, 
$$(1-\xi)\lambda_1=-(\xi-\eta)^2\mu+\xi^2-\xi\eta+\frac{\xi+\eta}{2}.$$
Since
$$(q-((1-\xi)\lambda_1-\eta)a_0)(a_2+\mu(a_1+a_{-1})-(2\mu\lambda_1+\lambda_2)a_0)\in \eisp{a_0}{\xi},$$
\begin{eqnarray*}
&q&(a_1+a_{-1})-((1-\xi)\lambda_1-\eta)(a_1+a_{-1})\\
&&=(\xi-\eta)(2(\mu+1)p_1+\frac{2\eta-2\xi-1}{2}(a_1+a_{-1})+(-(\xi-\eta)\mu+\eta)(a_0+a_2))\\
&&\in \eisp{a_0}{\xi}.
\end{eqnarray*}
So $(\eta,\mu)$ must agree with $(\frac{1}{2},-1)$.
Since 
\begin{eqnarray*}
&&(q-(\lambda_1-\frac{1}{2})a_0+\frac{\xi}{2}(a_1+a_{-1}))(a_0+a_2-a_1-a_{-1})\\
&&=(-\xi\lambda_1+2\xi^2)(a_0+a_2)+(-\xi\lambda_1+\xi^2-2\xi+\frac{1}{4})(a_1+a_{-1})\\
&&\in \eisp{a_0}{0}\oplus \eisp{a_0}{1}=\mathbb{F}a_0\oplus\mathbb{F}q,
\end{eqnarray*}
$\lambda_1=2\xi$ and $\xi^2+2\xi-\frac{1}{4}=0$.
Furthermore, since $\lambda_2=\frac{\xi}{2}$ by Lemma \ref{lemhall}, 
$\lambda_1=\frac{\xi}{4}+\frac{1}{2}$ and by the argument above, $(1-\xi)\lambda_1=2\xi^2-\xi+\frac{1}{2}$.
There exists no $\xi\in\mathbb{F}$ satisfying all of these conditions. 
Hence $M$ cannot be an axial algebra of Majorana type in this case and then the claim is proved.
\end{proof}

Thus we may assume $\dim \eisp{a_0}{\xi}=1$.
If $a_1+a_{-1}-2\lambda_1a_0\in\eisp{a_0}{\xi}$, then $p_1=0$ by the invariance. 
Hence there exists $\mu\in\mathbb{F}$ such that
$a_2+\mu(a_1+a_{-1})-(2\mu\lambda_1+\lambda_2)a_0\in \eisp{a_0}{0}$.
Then, 
$a_0a_2=-2\mu p_1+(2\mu\lambda_1+\lambda_2-2\mu\eta)a_0-\mu\eta(a_1+a_{-1}).$
By the $\tau_1$-invariance of $a_0a_2$, $2\mu\lambda_1+\lambda_2=2\mu\eta$.

\begin{claim}
\label{lemo1}
\sl
Let $\mu=0$. 
Then $M$ is isomorphic to $\fourone{\frac{1}{4}}{\eta}$, $\fourone{\xi}{\frac{\xi}{2}}$, $\fourone{\frac{1}{4}}{\frac{1}{2}}^{\times}$ or $\fourone{-\frac{1}{2}}{-\frac{1}{4}}^{\times}$.
\end{claim}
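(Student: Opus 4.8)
\emph{Plan of proof.} The goal is to recover the defining relations of $\fourone{\xi}{\eta}$ inside $M$. First I would use the hypothesis $\mu=0$: combined with the identity $2\mu\lambda_1+\lambda_2=2\mu\eta$ recorded just before the claim it forces $\lambda_2=0$, hence $a_0a_2=0$, i.e.\ $a_2\in\eisp{a_0}{0}$. Since we are in case (i) of Claim~\ref{lem41} we have $a_2=a_{-2}$, so $G$ permutes the four axes $a_{-1},a_0,a_1,a_2$ as the dihedral group of order $8$; transporting $a_0a_2=0$ and $a_0a_1=p_1+\eta(a_0+a_1)$ by $G$ and using that $p_1$ is $G$-invariant, I obtain the whole multiplication of the axes: $a_ia_i=a_i$, $a_ia_{i+1}=p_1+\eta(a_i+a_{i+1})$, and $a_ia_{i+2}=0$, indices read in the $D_8$-orbit. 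I also record that $\dim M\le5$ by Claim~\ref{lemo4} while $\dim M\ge\adim=4$, and that $\eisp{a_0}{\xi}=\mathbb{F}x_1$ with $x_1\neq0$ because $M$ is not of Jordan type.

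Next I would pin down the remaining data from the fusion rule and the $a_0$-eigenspace decomposition. The relation $0\star\xi=\xi$ gives $a_2x_1\in\eisp{a_0}{\xi}=\mathbb{F}x_1$; expanding $a_2x_1$ with \eqref{multiaipij} (applied with all $p_{1,j}$ equal to $p_1$) and the multiplication table above, and comparing coefficients, the vanishing of the $a_0$-part forces $\lambda_1=\eta$. Then, using $\lambda_1=\eta$, the relation $\xi\star\xi=\{0,1\}$ says $x_1^2\in\eisp{a_0}{0}\oplus\mathbb{F}a_0$; its $\eta$-component vanishes automatically since $x_1^2$ is $\tau_0$-invariant whereas $\eisp{a_0}{\eta}=\mathbb{F}(a_1-a_{-1})$ is $\tau_0$-anti-invariant, so the content is that the $\xi$-component of $x_1^2$ vanishes. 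Computing $x_1^2$ — for which one needs $p_1^2$, and $p_1^2$ is $G$-invariant and determined by \eqref{multiaipij} — and imposing this should collapse to the single equation $\xi(4\xi-1)(\xi-2\eta)=0$; since $\xi\neq0$ this yields $\xi=\tfrac14$ or $\eta=\tfrac{\xi}{2}$.

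Finally I would produce the isomorphism. Put $q=p_1-\tfrac{\xi-\eta}{2}(a_0+a_1+a_{-1}+a_2)$, which is $G$-invariant. Using \eqref{multiaipij} with $\lambda_1=\eta$ and the multiplication table, a direct computation gives $qa_i=\tfrac{-2\xi\eta-\xi+\eta}{2}a_i$ for all $i$; Lemma~\ref{lem2} then gives $qp_1=\tfrac{-2\xi\eta-\xi+\eta}{2}p_1$, so $q$ acts on all of $M$ as the scalar $\tfrac{-2\xi\eta-\xi+\eta}{2}$. Hence $\hat a_i\mapsto a_i$, $\hat q\mapsto q$ extends to a surjective algebra homomorphism $\fourone{\xi}{\eta}\to M$, so $M$ is a quotient of $\fourone{\xi}{\eta}$ with $\xi=\tfrac14$ or $\eta=\tfrac{\xi}{2}$. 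If $\dim M=5$ the map is an isomorphism, so $M\cong\fourone{\frac{1}{4}}{\eta}$ or $\fourone{\xi}{\frac{\xi}{2}}$. If $\dim M=4$ then $p_1\in\saxes$, so $G$-invariance forces $p_1\in\mathbb{F}(a_{-1}+a_0+a_1+a_2)$; a short linear-algebra argument (the $a_2$-component of $qa_0$ must vanish, and $p_1\neq0$ by Lemma~\ref{lem3}) then forces $q=0$, whence $\tfrac{-2\xi\eta-\xi+\eta}{2}=0$, which together with $\xi=\tfrac14$ or $\eta=\tfrac{\xi}{2}$ leaves only $(\xi,\eta)=(\tfrac14,\tfrac12)$ or $(-\tfrac12,-\tfrac14)$, so $M\cong\fourone{\frac{1}{4}}{\frac{1}{2}}^{\times}$ or $\fourone{-\frac{1}{2}}{-\frac{1}{4}}^{\times}$.

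The technical heart — and the step I expect to be the main obstacle — is the middle paragraph: carrying out the $a_0$-eigenspace bookkeeping and, above all, the explicit evaluation of $x_1^2$ and the verification that the obstruction to $x_1^2\in\eisp{a_0}{0}\oplus\mathbb{F}a_0$ factors cleanly as $\xi(4\xi-1)(\xi-2\eta)$. The rest is either formal ($G$-equivariance, Lemma~\ref{lem2}) or elementary linear algebra (the $\dim M=4$ versus $\dim M=5$ dichotomy and the coefficient comparisons).
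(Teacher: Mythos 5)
Your proposal is correct and follows essentially the same route as the paper: the paper likewise derives $\lambda_1=\eta$ from $a_2x_1\in\eisp{a_0}{\xi}=\mathbb{F}x_1$ (its spanning vector $q-(\lambda_1-\frac{\xi+\eta}{2})a_0+\frac{\xi}{2}(a_1+a_{-1})+\frac{\xi-\eta}{2}a_2$ is exactly $x_1$), obtains the obstruction $\frac{\xi(4\xi-1)(\xi-2\eta)}{4}(a_1+a_{-1})$ from $x_1^2\in\eisp{a_0}{0}\oplus\eisp{a_0}{1}$, and uses the same $G$-invariant element $q$ acting by the scalar $(1-\xi)\lambda_1-\frac{\xi+\eta}{2}$ to build the surjection from $\fourone{\xi}{\eta}$, with the quotient case handled by forcing $q=0$.
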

\begin{proof}
In this case, $\lambda_2=0$ and $a_0a_2=a_1a_{-1}=0$.
Set $q=p_1-\frac{\xi-\eta}{2}(a_0+a_1+a_{-1}+a_2)$.
Then, $a_iq=((1-\xi)\lambda_1-\frac{\xi+\eta}{2})a_i$
for $i=-1,0,1,2$.
So, by Lemma \ref{lem2}, 
$q^2=((1-\xi)\lambda_1-\frac{\xi+\eta}{2})q.$
Set $\pi=(1-\xi)\lambda_1-\frac{\xi+\eta}{2}$.
Since $a_2\in \eisp{a_0}{0}$ and
$$\eisp{a_0}{\xi}=\mathbb{F}(q-(\lambda_1-\frac{\xi+\eta}{2})a_0+\frac{\xi}{2}(a_1+a_{-1})+\frac{\xi-\eta}{2}a_2)$$
in this case,
\begin{eqnarray*}
&&a_2(q-(\lambda_1-\frac{\xi+\eta}{2})a_0+\frac{\xi}{2}(a_1+a_{-1})+\frac{\xi-\eta}{2}a_2)\\
&&=\xi q+\frac{\xi(\xi-\eta)}{2}a_0+\frac{\xi^2}{2}(a_1+a_{-1})+(\pi+\frac{\xi(\xi+\eta)+\xi-\eta}{2})a_2\in \eisp{a_0}{\xi}.
\end{eqnarray*}
Hence $\lambda_1=\eta$.
Thus there exists a surjective homomorphism from $\fourone{\xi}{\eta}$ to $M$ such that $\hat{a}_i\mapsto a_i$ and $\hat{q}\mapsto q$.
Furthermore, 
$$(q-(\lambda_1-\frac{\xi+\eta}{2})a_0+\frac{\xi}{2}(a_1+a_{-1})+\frac{\xi-\eta}{2}a_2)^2\in \eisp{a_0}{0}\oplus \eisp{a_0}{1}$$
and
\begin{eqnarray*}
&(&q-(\lambda_1-\frac{\xi+\eta}{2})a_0+\frac{\xi}{2}(a_1+a_{-1})+\frac{\xi-\eta}{2}a_2)^2\\
&&\in \eisp{a_0}{0}\oplus \eisp{a_0}{1}+\frac{\xi(4\xi-1)(\xi-2\eta)}{4}(a_1+a_{-1}).
\end{eqnarray*}
Since $a_1+a_{-1}-2\lambda_1a_0\notin\eisp{a_0}{0}$, $\xi=\frac{1}{4}$ or $\xi=2\eta$.

In the case when $M$ is isomorphic to $\fourone{\xi}{\eta}/I$ for some ideal $I$ of $\fourone{\xi}{\eta}$, $I\subset\eisp{a_0}{0}$ since $M$ is not of Jordan type.
Thus $I=\mathbb{F}q$ and $q\in\eisp{a_0}{0}$ since $M$ is of axial dimension 4.
Hence $M$ is isomorphic to  $\fourone{\frac{1}{4}}{\frac{1}{2}}^{\times}$ or $\fourone{-\frac{1}{2}}{-\frac{1}{4}}^{\times}$.
\end{proof}
If $M$ is isomorphic to $\fourone{\frac{1}{4}}{\eta}$ or its quotients, then $\chf{F}\neq3$ since $\frac{1}{4}\neq1$.
If $M$ is isomorphic to $\fourone{\xi}{\frac{\xi}{2}}$, then $\xi\neq2$ since $\eta\neq1$.
If $M$ is isomorphic to $\fourone{-\frac{1}{2}}{-\frac{1}{4}}$, then $\chf{F}\neq3,5$ since $\xi\neq1\neq\eta$.
Thus Step \ref{prop4o} (1), (2), (3) or (4) hold in the case when $\mu=0$. 

\begin{claim}
\label{lemo2}
\sl
Let $\mu\neq0$.
Then $M$ is isomorphic to $\fourtwo{\frac{1}{2}}{\eta}{\frac{1-4\eta}{2\eta}}$, $\fourtwo{\xi}{\frac{\xi^2}{2}}{\frac{1}{\xi}}$, $\fourtwo{\xi}{\frac{1-\xi^2}{2}}{\frac{-1}{\xi+1}}$ or  $\fourtwo{-1}{\frac{1}{2}}{-1}^{\times}$.
\end{claim}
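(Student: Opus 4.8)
The plan is to follow the scheme of the proof of Claim~\ref{lemo1}, the only new feature being that $a_0a_2=-2\mu p_1-\mu\eta(a_1+a_{-1})$ is now nonzero. Recall from the paragraph preceding the claim that $2\mu\lambda_1+\lambda_2=2\mu\eta$, hence $\lambda_2=2\mu(\eta-\lambda_1)$. Since we are in the case $\alpha=0$ we have $a_2=a_{-2}$, which forces the shift $f_1=\theta\circ\tau_0$ to satisfy $f_1^4=\mathrm{id}$ on $\saxes$; as $\adim=4$ this means $G\cong D_8$ and $\tau_1$ interchanges $a_0$ and $a_2$. In particular $a_2$ is $\tau_0$-fixed, so $\langle a_0,a_2\rangle_{alg}\subset\eisp{a_0}{1}\oplus\eisp{a_0}{\xi}\oplus\eisp{a_0}{0}$, and hence $\langle a_0,a_2\rangle_{alg}$ is a $2$-generated primitive axial algebra of Jordan type $\xi$ (with flip $\tau_1$). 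Lemma~\ref{lemhall} then already restricts the parameters: either $\xi=\tfrac12$, or $\lambda_2=\tfrac\xi2$ by Lemma~\ref{lemhall}(1), or $\langle a_0,a_2\rangle_{alg}$ has dimension at most $2$, which by Lemma~\ref{lemhall}(2) puts us at $\xi\in\{-1,\tfrac12\}$ with $a_0a_2=\xi(a_0+a_2)$; this last possibility can be carried along as a degenerate sub-case.

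Next I would introduce the central element. A short linear computation using $a_0a_2=-2\mu p_1-\mu\eta(a_1+a_{-1})$ and $\eta\neq0$ produces a nonzero $G$-invariant $q\in\mathbb{F}p_1+\mathbb{F}(a_0+a_1+a_{-1}+a_2)$ with $qa_0=\pi a_0$ for some $\pi\in\mathbb{F}$; by $G$-invariance $qa_i=\pi a_i$ for all $i$, so by Lemma~\ref{lem2} $qp_{i,j}=\pi p_{i,j}$ for all $i,j$, and since the $a_i$ and the $p_{i,j}$ span $M=\mathbb{F}p_1+\saxes$ we get $qw=\pi w$ for all $w\in M$; in particular $q^2=\pi q$. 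This is the analogue of the element $q$ used for $\threealg{\xi}{\eta}{\alpha}$ and $\fourone{\xi}{\eta}$. Writing $p_1=\tfrac{\xi-\eta}{\xi}x_1+\tfrac{\eta}{\xi}z_1+((1-\eta)\lambda_1-\eta)a_0$ in the $a_0$-eigenbasis, one computes ${p_1}^2$ from the known products $a_ia_j$ and the multiplication rule for $a_jp_1$ derived earlier; $G$-invariance of ${p_1}^2$ forces ${p_1}^2=\sigma p_1+\rho(a_0+a_1+a_{-1}+a_2)$ with $\sigma,\rho$ rational in $\lambda_1,\mu,\xi,\eta$. Imposing the fusion-rule containments that are not automatic, namely $x_1x_1,\,z_1z_1\in\eisp{a_0}{0}\oplus\eisp{a_0}{1}$, $x_1z_1\in\eisp{a_0}{\xi}$, and $x_1y_1,\,z_1y_1\in\eisp{a_0}{\eta}=\mathbb{F}y_1$, determines $\sigma,\rho$ and yields a system of polynomial equations in $\lambda_1,\mu,\xi,\eta$. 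Eliminating $\lambda_1$, the solution locus should be exactly the union of the three curves $(\xi,\mu)=(\tfrac12,\tfrac{1-4\eta}{2\eta})$, $(\eta,\mu)=(\tfrac{\xi^2}{2},\tfrac1\xi)$ and $(\eta,\mu)=(\tfrac{1-\xi^2}{2},\tfrac{-1}{\xi+1})$, with $\lambda_1$ determined on each.

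On each of these three curves every defining relation of $\fourtwo{\xi}{\eta}{\mu}$ is now satisfied in $M$ under $\hat a_i\mapsto a_i$, $\hat p_1\mapsto p_1$, so there is a surjective homomorphism $\fourtwo{\xi}{\eta}{\mu}\to M$; as $\dim\fourtwo{\xi}{\eta}{\mu}=5$, it is an isomorphism whenever $\dim M=5$. When $\dim M=4$ the kernel is a $1$-dimensional ideal, which (since $M$ is not of Jordan type) must be $\mathbb{F}q$, so $\pi=0$; the only point of the three curves at which $\mathbb{F}q$ is an ideal of axial dimension $4$ is $(\xi,\eta,\mu)=(-1,\tfrac12,-1)$, and there $M\cong\fourtwo{-1}{\frac{1}{2}}{-1}^{\times}$ with $\chf{F}\neq3$ (otherwise $-1=\tfrac12$). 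The excluded values of $\xi$ recorded for each family are precisely those making two of $0,1,\xi,\eta$ coincide or $\mu$ undefined or $0$. The main obstacle is the elimination in the second paragraph: one must check that the polynomial system arising from the fusion containments has no solution branch beyond the three stated curves; the low-dimensional degeneracies flagged in the first paragraph ($\langle a_0,a_2\rangle_{alg}$ of dimension $\le 2$, and $\dim M=4$) are the only other point requiring attention.
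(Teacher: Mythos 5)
Your overall architecture matches the paper's (identify $\langle a_0,a_2\rangle_{alg}$ as a Jordan-type subalgebra, invoke Lemma \ref{lemhall}, pin down $p_1^2$, obtain the three parameter curves, build the surjection from $\fourtwo{\xi}{\eta}{\mu}$, then treat the quotient), but there is a genuine gap at the step that is supposed to produce $p_1^2$. A $G$-invariant $q=sp_1+t(a_0+a_1+a_{-1}+a_2)$ satisfies $qa_i\in\mathbb{F}a_i$ only if the coefficient of $p_1$ in $qa_i$, namely $s(\xi-\eta)+2t(1-\mu)$, vanishes; since $\xi\neq\eta$, for $\mu=1$ this forces $s=0$, so the only such $q$ is a multiple of $a_0+a_1+a_{-1}+a_2$ and the identity $q^2=\pi q$ carries no information about $p_1^2$. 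The case $\mu=1$ is not vacuous: it occurs on each of the three target curves (e.g.\ $\fourtwo{\frac{1}{2}}{\frac{1}{6}}{1}$). So your central element does not exist exactly where you need it, and your fallback --- that $p_1^2$ is ``computed from the known products $a_ia_j$ and $a_jp_1$'' --- is not true as stated: those products do not determine $p_1^2$, and the fusion containments only yield linear conditions on the unknowns $\sigma,\rho$ whose sufficiency you acknowledge you have not checked. The elimination producing exactly the three curves is therefore asserted, not proved, and it is the substance of the claim.

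The paper closes both holes with Lemma \ref{lemhall}(3), which you do not use. Because $\mu\neq0$, the element $p_1+\frac{\eta}{2}(a_1+a_{-1})=-\frac{1}{2\mu}\,a_0a_2$ lies in the Jordan-type subalgebra $\langle a_0,a_2\rangle_{alg}$, so Lemma \ref{lemhall}(3) gives
$(p_1+\frac{\eta}{2}(a_1+a_{-1}))(a_0a_2-\xi(a_0+a_2))=((1-\xi)\lambda_2-\xi)(p_1+\frac{\eta}{2}(a_1+a_{-1}))$,
from which $p_1^2$ falls out uniformly in $\mu$, with no need for a central $q$. The parameter constraints then come not from a blind elimination but from two targeted fusion computations --- squaring $a_2+\mu(a_1+a_{-1})-2\mu\eta a_0\in\eisp{a_0}{0}$ and multiplying it by $x_1$ --- which give $2(\mu+1)\xi-2\mu(\mu+2)\eta-1=0$ and $\lambda_1=\frac{\eta(1-2\mu\eta)}{2(1-\xi)}$ (with the sub-cases $\mu=\pm1$ handled separately); combining these with $\lambda_2=2\mu(\eta-\lambda_1)$ and the dichotomy $\lambda_2=\frac{\xi}{2}$ or $\xi=\frac{1}{2}$ from Lemma \ref{lemhall}(1) yields precisely the three curves. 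Your quotient discussion is essentially right, but note that there too the relevant ideal is $\mathbb{F}((1-\mu)\hat p_1-\frac{\xi-\eta}{2}(\hat a_0+\hat a_1+\hat a_{-1}+\hat a_2))$, which meets $\saxes$ when $\mu=1$ and so cannot give an axial-dimension-$4$ quotient; that is why $\mu\neq1$ may be assumed there, but not in the main argument.
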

\begin{proof}
In this case,
\begin{eqnarray*}
\eisp{a_0}{0}&\ni&(a_2+\mu(a_1+a_{-1})-2\mu\eta a_0)^2\\
&&\in \eisp{a_0}{0}+\mu(\mu-1)(-2(\mu+1)\xi+2\mu(\mu+2)\eta+1)(a_1+a_{-1})\\
&& +2\mu(1-\mu^2)(2(1-\xi)\lambda_1)+\eta(2\mu\eta-1))a_2.
\end{eqnarray*}
Thus $\mu=1$, $(\mu,\eta)=(-1,\frac{1}{2})$ or  $2(\mu+1)\xi-2\mu(\mu+2)\eta-1=2(1-\xi)\lambda_1-\eta(1-2\mu\eta)=0$.
Furthermore,
\begin{eqnarray*}
(&a_2&+\mu(a_1+a_{-1})-2\mu\eta a_0)(p_1-(\lambda_1-\eta)a_0+\frac{\eta}{2}(a_1+a_{-1}))\\
&=&(\xi+2(\xi-\eta-\xi\eta)\mu-2\mu^2\eta)p_1\\
&&+((1-\xi)\lambda_1-\mu^2\eta^2+\mu\eta(\xi-\eta)+\eta(\xi-1))a_2\\
&&+(\mu(1-\xi)\lambda_1+(\xi\eta-\eta^2-\frac{\eta}{2}-\xi\eta^2)\mu+\frac{\xi\eta}{2})(a_1+a_{-1})\\
&&+(-2\mu\eta(1-\xi)\lambda_1-\mu^2\eta^2+(-2\xi\eta^2+\eta^2+\xi\eta)\mu)a_0\\
&\in&\mathbb{F}(p_1-(\lambda_1-\eta)a_0+\frac{\eta}{2}(a_1+a_{-1}))=\eisp{a_0}{\xi}.
\end{eqnarray*}
If $\mu=1$, then $p_1\notin\saxes$.
Thus $4\xi-6\eta-1=0$ and $\lambda_1=\frac{2\eta}{3}$.

If $\mu=-1$, then $((1-\xi)\lambda_1-\frac{1}{2})(a_2-a_1-a_{-1}+a_0)\in\eisp{a_0}{\xi}$.
Hence $\lambda=\frac{1}{2(1-\xi)}$.  
Thus $2(\mu+1)\xi-2\mu(\mu+2)\eta-1=0$
and
$\lambda_1=\frac{\eta(1-2\mu\eta)}{2(1-\xi)}$ for all $\mu\neq0$.

Since $\langle a_0,a_2\rangle_{alg}$ is of Jordan type,
$w(a_0a_2-\xi(a_0+a_2))=((1-\xi)\lambda_2-\xi)w$ for all $w\in\langle a_0,a_2\rangle_{alg}$ by Lemma \ref{lemhall}.
So, 
$$(p_1+\frac{\eta}{2}(a_1+a_{-1}))(a_0a_2-\xi(a_0+a_2))=((1-\xi)\lambda_2-\xi)(p_1+\frac{\eta}{2}(a_1+a_{-1}))$$
and then
\begin{eqnarray*}
{p_1}^2&=&(2\eta^2-\eta\xi-\frac{1}{2}\eta+\frac{\xi-2\xi^2}{2\mu})p_1\\
&&+(\frac{\eta^3\mu}{2}+\eta^3-\frac{\xi\eta^2}{2}+\frac{\xi\eta-2\xi^2\eta}{4\mu})(a_0+a_1+a_{-1}+a_2).
\end{eqnarray*}
Thus there exists a surjective homomorphism from $\fourtwo{\xi}{\eta}{\mu}$ to $M$ such that $\hat{a}_i\mapsto a_i$ and $\hat{p}_1\mapsto p_1$.
Hence $M$ is isomorphic to $\fourtwo{\xi}{\eta}{\mu}$ or its quotient.
Furthermore, by Lemma \ref{lemhall}, $\lambda_2=\frac{\xi}{2}$ or $\xi=\frac{1}{2}$.

In the case with $\xi=\frac{1}{2}$, $\mu=\frac{1-4\eta}{2\eta}$.

In the case with $\lambda_2=\frac{\xi}{2}$, $(\mu,\eta)=(\frac{1}{\xi},\frac{\xi^2}{2})$ or $(\frac{-1}{\xi+1},\frac{1-\xi^2}{2})$.

In the case when $M$ is isomorphic to $\fourtwo{\xi}{\eta}{\mu}/I$ for some ideal $I$, $I\subset\eisp{a_0}{0}$ since $M$ is not of Jordan type. 
Therefore $I=\mathbb{F}((1-\mu)p_1-\frac{\xi-\eta}{2}(a_2+a_1+a_{-1}+a_0)=0)$ where $\mu\neq1$ since $M$ is of axial dimension 4.
Thus $(\xi,\eta,\mu)=(-1,\frac{1}{2},-1)$ and $M$ is isomorphic to  $\fourtwo{-1}{\frac{1}{2}}{-1}^{\times}$.
\end{proof}
If $M$ is isomorphic to $\fourtwo{\xi}{\frac{\xi^2}{2}}{\frac{1}{\xi}}$, then $\xi\notin\{0,1,2,\pm\sqrt{2}\}$.since $\eta\neq1,\xi$.
If $M$ is isomorphic to $\fourtwo{\xi}{\frac{1-\xi^2}{2}}{\frac{-1}{\xi+1}}$, then $\xi\notin\{0,\pm1,\pm\sqrt{-1},-1\pm\sqrt{2}\}$ since $\eta\notin\{0,1,\xi\}$.
If $M$ is isomorphic to $\fourtwo{-1}{\frac{1}{2}}{-1}^{\times}$, then $\chf{F}\neq3$ since $\xi\neq\eta$.
Thus Step \ref{prop4o} (5), (6), (7) or (8) hold in the case with $\mu\neq0$.

Next, we consider the case when $\alpha=1$ and $\eta\neq\frac{1}{2}$ and show that Step \ref{prop4o} (9) holds in this case.
In this case, $a_3=a_{-1}+a_0-a_2$ and then $a_3=a_{-3}$.
So $\langle a_0,a_3\rangle_{alg}$ is of Jordan type $\xi$ and $\langle a_0,a_2\rangle_{alg}$ is of Jordan type $\eta$ or isomorphic to $\threealg{\xi}{\eta}{0}$.

When $\dim M\geq6$, then $\langle a_0,a_2\rangle_{alg}$ is isomorphic to $\threealg{\xi}{\eta}{0}$ and $\xi\neq\frac{1}{2}$.
In this case, $a_0a_3\neq0$ and then $\lambda_3=\frac{\xi}{2}$.
Since $a_0(p_{2,0}-p_{2,1})=a_0(a_{-1}-a_{2})$, $(\xi,\eta)=(2,\frac{1}{2})$.

Thus we may assume $\dim M\leq5$.
Then
$$p_{2,0}=\gamma p_1+\delta a_0+\delta a_1+\frac{\delta+2\eta-1}{2}a_{-1}+\frac{\delta-2\eta+1}{2}a_2.$$
Since $a_0a_3=p_1-p_{2,0}+\eta(a_{-1}-a_2)+a_0$, $a_0a_3\neq0$ and then $\lambda_3=\frac{\xi}{2}$ or $\xi=\frac{1}{2}$.

\begin{claim}
\sl
$\eisp{a_0}{\xi}$ cannot be of dimension 2.
\end{claim}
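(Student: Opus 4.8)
The plan is to suppose $\dim\eisp{a_0}{\xi}=2$ and derive a contradiction by reconstructing the whole multiplication of $M$ from finitely many parameters. Since $M$ is not of Jordan type, $\eisp{a_0}{\eta}\neq0$; together with $\adim=4$, $\dim M\le5$, and $\eisp{a_0}{1}=\mathbb{F}a_0$, this leaves only the eigenspace dimension patterns $(1,1,2,1)$ (for the eigenvalues $1,0,\xi,\eta$, with $\dim M=5$) and $(1,0,2,1)$ (with $\dim M=4$). In the latter case $z_1=0$, so $p_1=((1-\xi)\lambda_1-\eta)a_0+\tfrac{\xi-\eta}{2}(a_1+a_{-1})\in\saxes$, hence $M=\saxes$ and both $\langle a_0,a_2\rangle_{alg}$ and $\langle a_0,a_3\rangle_{alg}$ are at most $3$-dimensional $2$-generated algebras of Jordan type; Lemma~\ref{lemhall} applied to both leaves no parameters consistent with $\eta\neq\tfrac12$ and $(\xi,\eta,\lambda_1)\neq(2,\tfrac12,1)$. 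So I may assume the pattern $(1,1,2,1)$: $\eisp{a_0}{0}=\mathbb{F}z_1$ with $z_1\neq0$, $\eisp{a_0}{\eta}=\mathbb{F}y_1$ with $y_1=a_1-a_{-1}\neq0$, and $\eisp{a_0}{\xi}=\mathbb{F}x_1\oplus\big(\eisp{a_0}{\xi}\cap\saxes\big)$, the second summand spanned by $x_2-\gamma x_1$.

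Next I would extract the relations imposed by the fusion rule $\mathcal{F}(\xi,\eta)$: from $\eisp{a_0}{\xi}\eisp{a_0}{\xi}\subseteq\eisp{a_0}{1}\oplus\eisp{a_0}{0}$ we get $x_1x_1,\,x_1x_2,\,x_2x_2\in\mathbb{F}a_0\oplus\mathbb{F}z_1$; from $\eisp{a_0}{\xi}\eisp{a_0}{\eta}\subseteq\eisp{a_0}{\eta}$ and $\eisp{a_0}{0}\eisp{a_0}{\eta}\subseteq\eisp{a_0}{\eta}$ we get $x_iy_1,\,z_1y_1\in\mathbb{F}y_1$; and $y_1y_1\in\mathbb{F}a_0\oplus\mathbb{F}z_1\oplus\eisp{a_0}{\xi}$. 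I would evaluate these using formula~\eqref{multiaipij}, the definitions of $x_i,y_i,z_i$, the Seress condition, the $G$-invariance of $p_1$ and of $p_1^2$, the relation $a_3=a_{-1}+a_0-a_2$ with $\lambda_3=\tfrac{\xi}{2}$ or $\xi=\tfrac12$, and the already-established expression for $p_{2,0}$ in terms of $\gamma,\delta$. I would also use that $\langle a_0,a_2\rangle_{alg}$ is of Jordan type $\eta$ (it cannot be of Jordan type $\xi$, since $a_2$ has nonzero $a_0$-eigencomponent $-\tfrac12 y_1$ for the eigenvalue $\eta$) or isomorphic to $\threealg{\xi}{\eta}{0}$: in the first case Lemma~\ref{lemhall}(3) forces $p_{2,0}a_0\in\mathbb{F}a_0$, hence the $\xi$-eigencomponent of $p_{2,0}$ vanishes and $\delta=2\eta-1$; in the second a parallel analysis of that subalgebra likewise restricts the parameters. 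The upshot is a heavily over-determined polynomial system in $\xi,\eta,\lambda_1,\gamma$.

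Finally I would carry out the elimination and show this system has no solution compatible with the standing hypotheses: its only solutions force $\eta=\tfrac12$ (excluded in the present subcase), or $(\xi,\eta,\lambda_1)=(2,\tfrac12,1)$ (excluded because $M$ is not a quotient of $\infalg$, by Step~\ref{propminf}), or one of $y_1=0$ and $x_2\in\mathbb{F}x_1$, which would make $\eisp{a_0}{\eta}=0$ (so $M$ of Jordan type), resp.\ reduce $\dim\eisp{a_0}{\xi}$ below $2$. The main obstacle is organizational rather than conceptual: one must pick a small subfamily of the products above whose eigenvalue conditions already pin down $\xi$ and $\eta$, keep all computations inside the five-dimensional $M$, and dispose of the $\threealg{\xi}{\eta}{0}$ possibility and the two degeneracies without a combinatorial blow-up.
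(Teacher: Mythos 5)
Your overall strategy (count eigenspace dimensions, write down explicit eigenvectors, impose the fusion rule on their products, and eliminate parameters to reach a contradiction) is the same as the paper's, but the proposal stops exactly where the proof has to begin: the entire content of this claim is that the resulting polynomial system is inconsistent, and you only promise to "carry out the elimination," asserting without computation that the solutions land in the excluded set $\{\eta=\tfrac12,\ (\xi,\eta,\lambda_1)=(2,\tfrac12,1),\ y_1=0,\ x_2\in\mathbb{F}x_1\}$. That guess is in fact wrong: the actual elimination forces $(\xi,\eta)=(1,-1)$, and the contradiction is with $\xi\neq1$, not with any of the degeneracies you list. A proof by over-determination that does not exhibit the determination proves nothing, and here the predicted endpoint does not even match the true one.

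You also miss the structural observation that makes the computation tractable. When $\dim\eisp{a_0}{\xi}=2$, the space $\eisp{a_0}{\xi}\cap\saxes$ is nonzero, and any nonzero vector in it must involve $a_2$ (otherwise $a_1+a_{-1}-2\lambda_1a_0\in\eisp{a_0}{\xi}$, which forces $p_1=0$ against Lemma \ref{lem3}); hence $a_2\notin\eisp{a_0}{1}\oplus\eisp{a_0}{0}\oplus\eisp{a_0}{\eta}$ and $\langle a_0,a_2\rangle_{alg}$ \emph{cannot} be of Jordan type $\eta$ — it must be $\threealg{\xi}{\eta}{0}$, so $\xi\neq\tfrac12$ and $\lambda_2$ is an explicit rational function of $\xi,\eta$. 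You instead keep both alternatives for $\langle a_0,a_2\rangle_{alg}$ alive and propose "a parallel analysis" for each, which doubles the case work precisely where the hypothesis $\dim\eisp{a_0}{\xi}=2$ is meant to collapse it. (Your side case $\dim M=4$ is also handled more heavily than necessary: there $z_1=0$ gives $p_1\in\mathbb{F}a_0+\mathbb{F}(a_1+a_{-1})$, while $G$-invariance of $p_1$ inside $\saxes$ forces $p_1\in\mathbb{F}(2a_0+2a_1+a_{-1}+a_2)$, so $p_1=0$, contradicting Lemma \ref{lem3} directly — no appeal to Lemma \ref{lemhall} is needed.) To complete the argument you must actually do what the paper does: match coefficients of $p_{2,0}-((1-\xi)\lambda_2-\eta)a_0-\tfrac{\xi-\eta}{2}(2a_2+a_1-a_{-1})$ against $\eisp{a_0}{0}=\mathbb{F}z_1$ to solve for $(\gamma,\delta,\lambda_2)$, extract the explicit vector $z'\in\eisp{a_0}{\xi}\cap\saxes$, and impose $z'z'\in\eisp{a_0}{0}\oplus\eisp{a_0}{1}$ together with $\lambda_1-\lambda_2+1=\tfrac{\xi}{2}$ and the $\threealg{\xi}{\eta}{0}$ value of $\lambda_2$.
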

\begin{proof}
Assume $\dim \eisp{a_0}{\xi}=2$.
Then, $\dim M=5$ and $a_2\notin \eisp{a_0}{1}\oplus \eisp{a_0}{0}\oplus \eisp{a_0}{\eta}$.
Thus $\langle a_0,a_2\rangle_{alg}$ is isomorphic to $\threealg{\xi}{\eta}{0}$.
Hence $\xi\neq\frac{1}{2}$.
Since $\dim \eisp{a_0}{0}=1$,
\begin{eqnarray*}
p_{2,0}&-&((1-\xi)\lambda_2-\eta)a_0-\frac{\xi-\eta}{2}(2a_2+a_1-a_{-1})\\
&=&\gamma p_1+(\delta-(1-\xi)\lambda_2+\eta)a_0\\
&&+(\delta-\frac{\xi-\eta}{2})a_1+\frac{\delta+\xi+\eta-1}{2}a_{-1}+\frac{\delta-2\xi+1}{2}a_2\\
&=&\gamma(p_1-((1-\xi)\lambda_1-\eta)a_0-\frac{\xi-\eta}{2}(a_1+a_{-1})).
\end{eqnarray*}
Therefore $(\gamma,\delta,\lambda_2)=(\frac{3\xi+\eta-1}{\eta-\xi},2\xi-1,\frac{3\xi+\eta-2}{\eta-\xi}\lambda_1+\frac{(\xi+\eta)(2\xi-1)}{(\eta-\xi)(\xi-1)})$.
Thus
$$z^{\prime}=a_2-\frac{2\xi-1}{\eta-\xi}a_1-\frac{\eta+\xi-1}{\eta-\xi}a_{-1}-\frac{(\xi+\eta)(2\xi-1)}{(\eta-\xi)(\xi-1)}a_0\in \eisp{a_0}{\xi}.$$
Since $z^{\prime}z^{\prime}\in \eisp{a_0}{0}\oplus \eisp{a_0}{1}$, $\varphi_0(a_3)=\lambda_1-\lambda_2+1=\frac{\xi}{2}$ and $\lambda_2=\frac{-3\xi^3-4\xi\eta+\xi+2\eta}{4(1-2\xi)}$, $(\xi,\eta)$ must be $(1,-1)$ if $\xi,\eta\neq0,\frac{1}{2}$.
Because $\xi\neq1$, it is a contradiction and then the claim is proved.
\end{proof}

Thus we may assume $\dim \eisp{a_0}{\xi}=1$.
Then
\begin{eqnarray*}
&&p_{2,0}-(\lambda_2-\eta)a_0+\frac{\eta}{2}(2a_2+a_1-a_{-1})\\
&&=\gamma p_1+(\delta-\lambda_2+\eta)a_0+(\delta+\frac{\eta}{2})a_1+\frac{\delta+\eta-1}{2}a_{-1}+\frac{\delta+1}{2}a_2\\
&&=\gamma(p_1-(\lambda_2-\eta)a_0+\frac{\eta}{2}(a_1+a_{-1})).
\end{eqnarray*}
Therefore $(\gamma,\delta,\lambda_2)=(1-\frac{2}{\eta},-1,\frac{\eta-2}{\eta}\lambda_1+1)$.
Then, $\eta a_2+a_1+(1-\eta)a_{-1}-\eta a_0\in \eisp{a_0}{0}$.
Since 
\begin{eqnarray*}
&&(\eta a_2+a_1+(1-\eta)a_{-1}-\eta a_0)^2\\
&&\in \eisp{a_0}{0}-(\xi-1)(2\eta-1)(\eta-2)(a_1+a_{-1}-\frac{2}{\eta}a_0),
\end{eqnarray*}
$\eta=2$ and then $\lambda_2=1$.
Furthermore, since 
\begin{eqnarray*}
(p_1-(\lambda_1-2)a_0+(a_1+a_{-1}))(2a_2+a_1-a_{-1}-2a_0)\\
=(1-\xi)(\lambda_1-1)(2a_2+a_1-a_{-1}-2a_0)\in \eisp{a_0}{\xi},
\end{eqnarray*}
$\lambda_1=1$.
Since $\varphi_0(a_3)=1$ and $\langle a_0,a_3\rangle_{alg}$ is of Jordan type $\xi\neq2$, $\xi=\frac{1}{2}$.
Since $\xi\neq\eta$, $\chf{F}\neq3$.
Set $q=p_1+\frac{1}{2}(2a_0+2a_1+a_{-1}+a_2)$.
Then $qa_i=0$ for $i=-1,0,1,2$. 
So, by Lemma \ref{lem2}, $q^2=0$.
Thus there exists a surjective homomorphism from $\fourthree$ to $M$ such that $\hat{a}_i\mapsto a_i$ and $\hat{q}\mapsto q$.
Hence $M$ is isomorphic to $\fourthree$ or its quotient.
If $M$ is a quotient, then $q=0$ and $M$ is isomorphic to $\fourthree^{\times}$.
Thus Step \ref{prop4o} (9) holds in this case.

Hence it suffices to verify that $\eta\neq\frac{1}{2}$ when $\alpha\neq0$.
Assume $\eta=\frac{1}{2}$ and $\alpha\neq0$.
If $\adim\leq 5$, then it follows that $p_{2,0}=p_{2,1}=\gamma p_1+\epsilon((\alpha+1)(a_0+a_1)+a_{-1}+a_2)$ for some $\gamma,\epsilon\in\mathbb{F}$.
If $\dim M=4$, then $p_{2,0}=\delta p_1$ for some $\delta\in\mathbb{F}$.
Then $0=a_0(\delta p_1-p_{2,0})$. 
But this cannot be true because $\adim=4$.

Let $\dim M\geq 6$.
Then, $\xi=2$ and since the coefficient of $p_1$ in $a_0p_{2,1}$ is $0$, $\lambda_1=1$.
Thus we may assume $\dim M\leq5$.

\begin{claim}
\sl
\label{lemo3}
$\dim \eisp{a_0}{0}\neq2$.
\end{claim}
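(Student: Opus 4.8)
I would argue by contradiction, assuming $\dim\eisp{a_0}{0}=2$. Since $M$ is not of Jordan type, $\eisp{a_0}{\xi}\neq0$ and $\eisp{a_0}{\eta}\neq0$, so $\dim M\ge 1+2+1+1=5$; as the case $\dim M\ge 6$ has already been excluded, this forces $\dim M=5$ and $\dim\eisp{a_0}{\xi}=\dim\eisp{a_0}{\eta}=1$. Because $\dim M=5>\adim=4$ we must have $p_1\notin\saxes$: otherwise $p_{2,0}=\gamma p_1+\epsilon w'$ (where $w'=(\alpha+1)(a_0+a_1)+a_{-1}+a_2$) would also lie in $\saxes$, so every product of the axes $a_{-1},a_0,a_1,a_2$ would stay in $\saxes$, making $\saxes$ a subalgebra equal to $M$ and $\dim M=\adim=4$, a contradiction. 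Also $\eisp{a_0}{\eta}\neq0$ forces $a_1\neq a_{-1}$. Hence $M=\mathbb{F}p_1\oplus\saxes$ has basis $p_1,a_{-1},a_0,a_1,a_2$; in it $a_ia_{i+1}=p_1+\tfrac12(a_i+a_{i+1})$, $a_0a_2=a_{-1}a_1=p_{2,0}+\tfrac12(a_0+a_2)$ with $p_{2,0}=p_{2,1}=\gamma p_1+\epsilon w'$, while $a_3=\alpha a_0+a_{-1}-\alpha a_2$ (the image of the odd relation under the shift $f_1$) together with \eqref{multiaipij} makes $a_0a_3$ and $a_{-1}a_2$ explicit as well; and $p_1^2$ is $G$-invariant, so $p_1^2=c_0p_1+c_1w'$ for scalars $c_0,c_1$ (the $G$-fixed subspace of $\saxes$ being exactly $\mathbb{F}w'$).

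Next I would write out the $a_0$-eigenspaces. Since $\eta=\tfrac12$, the vector $y_1=a_1-a_{-1}$ spans $\eisp{a_0}{\eta}$, $x_1=p_1-(\lambda_1-\tfrac12)a_0+\tfrac14(a_1+a_{-1})$ spans $\eisp{a_0}{\xi}$, and $\eisp{a_0}{0}$ is spanned by $z_1=p_1-((1-\xi)\lambda_1-\tfrac12)a_0-\tfrac{2\xi-1}{4}(a_1+a_{-1})$ together with the $0$-eigencomponent $u_0$ of $a_2$; computing $a_0a_2=p_{2,0}+\tfrac12(a_0+a_2)$ two ways gives $u_0=-2p_{2,0}+(\lambda_2-1)a_0+(2\xi-1)u_\xi$ for a suitable multiple $u_\xi$ of $x_1$, and $u_0$ is independent of $z_1$ precisely when $\epsilon\neq0$. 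The degenerate case $\epsilon=0$, where $p_{2,0}\in\mathbb{F}p_1$ and $\langle a_0,a_2\rangle_{alg}$ has Jordan type $\tfrac12$, I would clear first by a short argument using Lemma~\ref{lemhall}.

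With explicit bases of all four eigenspaces in hand, the plan is to impose the fusion rule $\mathcal{F}(\xi,\tfrac12)$ on the products of basis eigenvectors: $x_1^2\in\eisp{a_0}{1}\oplus\eisp{a_0}{0}$; $x_1z_1,\,x_1u_0\in\eisp{a_0}{\xi}$; and $z_1^2,\,z_1u_0,\,u_0^2\in\eisp{a_0}{0}$ --- each a vanishing-coefficient condition in the basis $p_1,a_{-1},a_0,a_1,a_2$ once $p_1^2=c_0p_1+c_1w'$ is substituted. Alongside I would use Lemma~\ref{lemhall} on the $2$-generated subalgebra $\langle a_0,a_3\rangle_{alg}$, which has Jordan type $\xi$: since $\xi\neq\eta=\tfrac12$, either it is $\mathbb{F}$ or $\mathbb{F}\oplus\mathbb{F}$ (easily excluded here) or $\lambda_3=\tfrac\xi2$, which together with $\lambda_3=\varphi_0(a_3)=\alpha+\lambda_1-\alpha\lambda_2$ yields one equation; similarly $\langle a_0,a_2\rangle_{alg}$ (Jordan type $\tfrac12$, or a larger $2$-generated algebra when $\epsilon\neq0$) yields another. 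Solving this linear system for $\gamma,\epsilon,c_0,c_1,\lambda_1,\lambda_2$ in terms of $\xi$ and $\alpha$ should collapse everything to a single polynomial identity in $\xi$ after putting $\eta=\tfrac12$; exactly as in the companion argument ruling out $\dim\eisp{a_0}{\xi}=2$ in the $\alpha=1$ case --- where the analogous identity forced the excluded value $(\xi,\eta)=(1,-1)$ --- this identity has no root with $\xi\notin\{0,1\}$, and the contradiction proves $\dim\eisp{a_0}{0}\neq2$.

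The obstruction is almost entirely bookkeeping rather than idea: one must carry the parameter $\alpha$ through, handle both possible shapes of $\langle a_0,a_2\rangle_{alg}$, dispatch the degenerate possibilities ($p_1=0$, $a_1=a_{-1}$, $\epsilon=0$, $\langle a_0,a_3\rangle_{alg}\cong\mathbb{F}$ or $\mathbb{F}\oplus\mathbb{F}$) up front so the main computation stays uniform, and then verify that the terminal polynomial in $\xi$ genuinely has no admissible root. The heaviest identities are those involving $u_0$ and $u_0^2$, since $u_0$ mixes the $p_{2,0}$-data with the $x_1$-data; a good normalization of the free multiple $u_\xi$ (for instance forcing the $a_2$-coefficient of $u_0$ to be $1$) should keep these within reach.
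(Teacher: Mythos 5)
Your setup is sound and matches the paper's: from $\dim\eisp{a_0}{0}=2$ one does get $\dim M=5$, $\dim\eisp{a_0}{\xi}=\dim\eisp{a_0}{\eta}=1$, $p_1\notin\saxes$, the basis $p_1,a_{-1},a_0,a_1,a_2$, and $p_1^2=c_0p_1+c_1w'$ with $w'=(\alpha+1)(a_0+a_1)+a_{-1}+a_2$ spanning the $G$-fixed part of $\saxes$. But the decisive part of the argument is missing, and the endpoint you predict is not the one that actually occurs. The paper does not run through $x_1^2$, $x_1z_1$, $u_0^2$, etc.; it uses exactly two constraints. First, $\dim\eisp{a_0}{\xi}=1$ forces $p_{2,0}-(\lambda_2-\frac12)a_0+\frac14(a_2+a_{-2})$ to be proportional to $x_1$, which pins down $(\gamma,\epsilon,\lambda_2)=(-(\alpha+2),-\frac12,-(\alpha+2)\lambda_1+1)$. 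Second --- and this is the tool your plan lacks --- with these values $a_iw'=(\alpha+2)a_i$ for all $i$, so Lemma \ref{lem2} gives $p_1w'=(\alpha+2)p_1$; comparing with the direct expansion of $p_1w'$ yields $2(\alpha+2)(\xi-1)p_1=(\lambda_1-1)(\xi-1)w'$, and since $p_1\notin\saxes$, $w'\neq0$ and $\xi\neq1$, this forces $\alpha=-2$ and $\lambda_1=1$.

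The more serious problem is your terminal claim that everything collapses to a polynomial in $\xi$ with no root outside $\{0,1\}$. That is not what happens: the fusion constraints in this configuration are satisfiable. They force $\alpha=-2$, $\lambda_1=\lambda_2=1$, $p_{2,0}=-\frac12w'$ and $Mw'=0$, so $\mathbb{F}w'$ is an ideal and $M/\mathbb{F}w'$ carries the even relation $a_2+a_{-1}-a_1-a_0=0$ with $\adim=3$; by (the proof of) Claim \ref{lemo5} this forces $(\xi,\eta,\lambda_1)=(2,\frac12,1)$. So $\xi=2$ survives every fusion constraint and is eliminated only by the standing assumption from Step \ref{propminf} that $M$ is not a quotient of $\infalg$ --- not by $\xi\notin\{0,1\}$. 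Your analogy with the companion claim ruling out $\dim\eisp{a_0}{\xi}=2$, where the terminal value $(\xi,\eta)=(1,-1)$ is excluded outright, does not transfer here. Consequently, even if you carried out all the bookkeeping you describe, the computation would not end in the contradiction you expect; you would still have to recognize the residual solution as the excluded $\infalg$ configuration and appeal to that exclusion to finish.
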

\begin{proof}
Assume $\dim\eisp{a_0}{0}=2$.
Since $\dim \eisp{a_0}{\xi}=1$,
\begin{eqnarray*}
&&p_{2,0}-(\lambda_2-\frac{1}{2})a_0+\frac{1}{4}(a_2+a_{-2})\\
&&=\gamma p_1-(\lambda_2-(\alpha+1)\epsilon-\frac{1}{2})a_0+((\alpha+1)\epsilon+\frac{\alpha}{4})a_1+(\epsilon-\frac{\alpha}{4})a_{-1}+(\epsilon+\frac{1}{2})a_2\\
&&=\gamma(p_1-(\lambda_2-\frac{1}{2})a_0+\frac{1}{4}(a_1+a_{-1})).
\end{eqnarray*}
Therefore $(\gamma,\epsilon,\lambda_2)=(-(\alpha+2),-\frac{1}{2},-(\alpha+2)\lambda_1+1)$.
Then, $a_i((\alpha+1)(a_0+a_1)+a_{-1}+a_2)=(\alpha+2)a_i$ for $i=-1,0,1,2$.
So, by Lemma \ref{lem2}, 
\begin{eqnarray*}
0&=&((\alpha+1)(a_0+a_1)+a_{-1}+a_2)p_1-(\alpha+2)p_1\\
&=&2(\alpha+2)(\xi-1)p_1-(\lambda_1-1)(\xi-1)((\alpha+1)(a_0+a_1)+a_{-1}+a_2).
\end{eqnarray*}
Hence $\alpha=-2$ and $\lambda_1=1$ in this case, $\dim\eisp{a_0}{0}\neq2$.
\end{proof}

Thus we may assume $\dim \eisp{a_0}{0}=1$.
Then,
\begin{eqnarray*}
&&p_{2,0}-((1-\xi)\lambda_2-\frac{1}{2})a_0-\frac{2\xi-1}{4}(a_2+a_{-2})\\
&&=\gamma p_1-((1-\xi)\lambda_2-(\alpha+19\epsilon-\frac{1}{2})a_0+((\alpha+1)\epsilon-\frac{2\xi-1}{4}\alpha)a_1\\
&&+(\epsilon+\frac{2\xi-1}{4}\alpha)a_{-1}+(\epsilon-\frac{2\xi-1}{2})a_2\\
&&=\gamma(p_1-((1-\xi)\lambda_1-\frac{1}{2})a_0-\frac{2\xi-1}{4}(a_1+a_{-1}).
\end{eqnarray*}
Therefore $(\gamma,\epsilon,\lambda_2)=(-(\alpha+2),\frac{2\xi-1}{2},-(\alpha+2)\lambda_1+\frac{\xi\alpha+\xi+1}{1-\xi})$.
Then, 
$$(\xi-1)a_0+(\alpha+1)(\xi-1)a_1+(\xi-1)a_{-1}+(\xi\alpha+\xi+1)a_0\in \eisp{a_0}{\xi}.$$

If $\alpha\neq-2$, then $\dim\langle a_0,a_2\rangle_{alg}=4$ and then, by Claim \ref{lemo5}, $M$ is isomorphic to $\threealg{\xi}{\frac{1}{2}}{1-\alpha^2}$. 
Thus $\lambda_2=\frac{(2-\xi)(\alpha^2-1)+3\xi+2}{8}$ and then $\lambda_1=\frac{(\xi-2)\alpha}{8}+\frac{\xi^2+\xi+2}{4(1-\xi)}$.
Set
\begin{eqnarray*}
q&=&\frac{1}{\alpha+2}(p_2-\frac{2\xi-1}{4}((2-\alpha^2)a_0+a_2+a_{-2})\\
&=&p_1-\frac{2\xi-1}{4}(\alpha a_0+a_1+a_{-1}).
\end{eqnarray*}
Then, by the structure of $\threealg{\xi}{\frac{1}{2}}{1-\alpha^2}$, $qw=\frac{-\xi(\xi+1)(\alpha-2)}{8}w$ for all $w\in\langle a_0,a_2\rangle_{alg}$.
Since 
\begin{eqnarray*}
&(&\xi-1)a_0+(\alpha+1)(\xi-1)a_1+(\xi-1)a_{-1}+(\xi\alpha+\xi+1)a_0\\
&&\in\langle a_0,a_2\rangle_{alg}+\frac{(\alpha+2)(\xi-1)}{2}(a_1+a_{-1}),
\end{eqnarray*}
\begin{eqnarray*}
\eisp{a_0}{\xi}&\ni&q(a_1+a_{-1})+\frac{\xi(\xi+1)(\alpha-2)}{8}(a_1+a_{-1})\\
&=&2(2\xi-1)p_1+\frac{2\xi-1}{2}((-\xi\alpha-\xi+1)a_0\\
&&+((1-\xi)\alpha-\xi)a_1-\xi a_{-1}+(1-\xi)a_2).
\end{eqnarray*}
But this cannot be true.

Hence we may assume $\alpha=-2$.
Then, $a_i(a_2+a_{-1}-a_1-a_0)=\xi(a_2+a_{-1}-a_1-a_0)$ for $i=-1,0,1,2$ and $p_1(a_2+a_{-1}-a_1-a_0)=(1-\xi)(\lambda_1-1)(a_2+a_{-1}-a_1-a_0)$.
So $\mathbb{F}(a_2+a_{-1}-a_{1}-a_0)$ is an ideal of $M$.
Then, by Claim \ref{lemo5}, $M/\mathbb{F}(a_2+a_{-1}-a_1-a_0)$ is a quotient of $\infalg$.

Hence the proof of Step \ref{prop4o}  is completed.

\subsection{Proof of Step \ref{prop5}}
Assume $\adim=5$.
\begin{claim}
\sl
\label{lem31}
$\dim M\geq6$.
\end{claim}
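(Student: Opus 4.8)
The plan is to argue by contradiction: since $\dim M\ge\adim=5$ always, it suffices to rule out $\dim M=5$. Assume $\dim M=5$, so that $M=\saxes$. Then $p_1=p_{1,0}$, $p_{2,0}$ and $p_{2,1}$ all lie in $\saxes$; by Lemma \ref{lem3} (applicable since $\dim M\ge3$) we have $p_1\neq0$; and $M$ is not of Jordan type, because a $2$-generated primitive axial algebra of Jordan type has dimension at most $3$ (Lemma \ref{lemhall}, \cite{hrs15}). Hence $\eisp{a_0}{\xi}\neq0$ and $\eisp{a_0}{\eta}\neq0$. By Lemma \ref{lem1}, $\{a_0,a_1,a_{-1},a_2,a_{-2}\}$ is a basis of $M$; splitting it under $\tau_0$ gives $\eisp{a_0}{\eta}=\mathbb{F}(a_1-a_{-1})\oplus\mathbb{F}(a_2-a_{-2})$, which is $2$-dimensional, and $\mathbb{F}a_0\oplus\eisp{a_0}{0}\oplus\eisp{a_0}{\xi}=\mathbb{F}a_0\oplus\mathbb{F}(a_1+a_{-1})\oplus\mathbb{F}(a_2+a_{-2})$. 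I would then observe $\eisp{a_0}{0}\neq0$: if $z_1=0$ then $p_1\in\mathbb{F}a_0+\mathbb{F}(a_1+a_{-1})$, and comparing the coefficients in $\theta(p_1)=p_1$ against the basis $\{a_0,a_1,a_{-1},a_2\}$ forces $\xi=\eta$, a contradiction. Therefore $\dim\eisp{a_0}{0}=\dim\eisp{a_0}{\xi}=1$, with $\eisp{a_0}{0}=\mathbb{F}z_1$, $\eisp{a_0}{\xi}=\mathbb{F}x_1$ and $\eisp{a_0}{\eta}=\mathbb{F}y_1\oplus\mathbb{F}y_2$.

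Next I would feed in the linear relation furnished by Lemma 2.2 with $k=3$ (so $\adim=2k-1=5$), and apply $\tau_0$ to it to obtain, in both the even and the odd case, an expression for $y_3=a_3-a_{-3}$ as an $\mathbb{F}$-combination of $y_1,y_2$. I would then write $p_1=c_0a_0+c_1(a_1+a_{-1})+c_2(a_2+a_{-2})$ and $p_{2,0}=d_0a_0+d_1(a_1+a_{-1})+d_2(a_2+a_{-2})$ (legitimate since both are $\tau_0$-invariant and lie in $M$), use the Lemma 2.2 relation to express $a_3$, and impose: that $\theta$ fixes $p_1$ and carries $p_{2,0}$ to the $\tau_0$-invariant element $p_{2,1}$; the identity \eqref{multiaipij} for $a_0p_1$, together with $z_1a_0=0$ and $x_1a_0=\xi x_1$; and the constraints of Lemma 2.5, in particular \eqref{p2linrel} and the formula there for $a_0p_{2,1}$. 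Each of these becomes a linear or polynomial equation in $\xi,\eta,\lambda_1,\lambda_2$ and the coefficients $\alpha_j$ of the Lemma 2.2 relation. I would also use that $\langle a_0,a_2\rangle_{alg}$, being $2$-generated of axial dimension at most $3$, is of Jordan type or isomorphic to $\threealg{\xi}{\eta}{0}$ by Step \ref{prop3}, which pins down $\lambda_2$ and closes the system.

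Eliminating, I expect the even-relation case to collapse quickly, exactly as in the proof of Claim \ref{lemo5}, to $(\xi,\eta,\lambda_1)=(2,\tfrac12,1)$, while the odd-relation case is the delicate one and needs the $\langle a_0,a_2\rangle_{alg}$ input. In every case the surviving solutions should be either $p_1=0$, contradicting $p_1\neq0$, or $(\xi,\eta,\lambda_1)=(2,\tfrac12,1)$; the latter is excluded, because by Step \ref{propminf} such an $M$ with $\adim=5\le5$ would be a quotient of $\infalg$, contrary to the standing assumption of this section. This yields the required contradiction, hence $\dim M\ge6$. The main obstacle is controlling the elimination in the odd case: there the Lemma 2.2 relation, the possible structures of $\langle a_0,a_2\rangle_{alg}$, and the value of $\dim\eisp{a_0}{\xi}$ all interact, and working throughout with the invariant elements $p_1$ and $p_{2,0}$ instead of raw products of axes is what keeps the number of unknowns small enough to finish.
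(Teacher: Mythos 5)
Your overall strategy coincides with the paper's at the top level (assume $\dim M=5$, so $M=\saxes$, and aim at the contradiction $p_1=0$ against Lemma \ref{lem3}), but the heart of your argument is not actually carried out: you describe an elimination over $\xi,\eta,\lambda_1,\lambda_2$ and the coefficients of the Lemma 2.2 relation and then assert that you ``expect'' the surviving solutions to be $p_1=0$ or $(\xi,\eta,\lambda_1)=(2,\tfrac12,1)$. That expectation is the entire content of the claim, so as written this is a plan rather than a proof. It is also a much heavier plan than necessary, and the extra machinery you invoke (the eigenspace dimensions, Lemma 2.5, the structure of $\langle a_0,a_2\rangle_{alg}$, Step \ref{prop3}) obscures the one observation that makes the claim easy.

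The paper's route is: since $p_1$ is $G$-invariant and lies in $\saxes$, in the even case the $G$-invariant subspace of $\saxes$ is already zero (apply $\theta$ to a general $\tau_0$-invariant combination $c_0a_0+c_1(a_1+a_{-1})+c_2(a_2+a_{-2})$, rewrite $a_3$ via the even relation, and compare coefficients in the basis $\{a_0,a_{\pm1},a_{\pm2}\}$ to get $c_0=c_1=c_2=0$), so $p_1=0$ at once. In the odd case the invariance properties force $p_1=\gamma v$ and $p_{2,0}=p_{2,1}=\delta v$ for the single invariant vector $v=a_2+a_{-2}+(\alpha+1)(a_1+a_{-1})+(\alpha+\beta+1)a_0$; then $\delta p_1-\gamma p_{2,0}=0$, and applying $a_0$ to this via formula \eqref{multiaipij} leaves
$\epsilon a_0+\tfrac{\delta\eta(\xi-\eta)}{2}(a_1+a_{-1})-\tfrac{\gamma\eta(\xi-\eta)}{2}(a_2+a_{-2})=0$,
whence $\gamma=\delta=0$ and again $p_1=0$ — no case analysis on $\dim\eisp{a_0}{\xi}$, no input from $\langle a_0,a_2\rangle_{alg}$, and no appeal to Step \ref{propminf} is needed. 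To repair your write-up you would either have to perform your elimination in full (and your prediction for the even case, namely reduction to $(2,\tfrac12,1)$, does not match what actually happens there: the invariance alone kills $p_1$), or, better, replace the elimination by the two-line invariance argument above.
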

\begin{proof}
Assume $\dim M=5$.
It suffices to verify that $p_1=0$ since it contradicts Lemma \ref{lem3}.
Since $\dim M\leq5$, $M=\saxes$.
By Lemma \ref{lem1} and the $G$-invariance of $0$, $a_3+a_{-2}+\alpha(a_2+a_{-1})+\beta(a_1+a_0)=0$ or $a_3-a_{-2}+\alpha(a_2-a_{-1})+\beta(a_1-a_0)=0$ for some $\alpha,\beta\in\mathbb{F}$.

If $M$ satisfies an even relation, then there exist $\alpha,\beta\in\mathbb{F}$ such that $a_3+a_{-2}+\alpha(a_2+a_{-1})+\beta(a_1+a_0)=0$.
By the $G$-invariance of $p_1$, $p_1=0$.

Otherwise, there exist $\alpha,\beta\in\mathbb{F}$ such that $a_3-a_{-2}+\alpha(a_2-a_{-1})+\beta(a_1-a_0)=0$.
Since $\tau_0(p_1)=p_1$, $f(p_1)=p_1$, $\tau_0(p_{2,0})=p_{2,0}$, $\tau_0(p_{2,1})=p_{2,1}$ and $f(p_{2,0})=p_{2,1}$,
there exist $x,y\in\mathbb{F}$ such that $p_1=\gamma(a_2+a_{-2}+(\alpha+1)(a_1+a_{-1})+(\alpha+\beta+1)a_0)$ and $p_{2,0}=p_{2,1}=\delta(a_2+a_{-2}+(\alpha+1)(a_1+a_{-1})+(\alpha+\beta+1)a_0)$.
Then, 
$$0=a_0(\delta p_1+\gamma p_{2,0})=\epsilon a_0+\frac{\delta\eta(\xi-\eta)}{2}(a_1+a_{-1})+\frac{\gamma\eta(\xi-\eta)}{2}(a_2+a_{-2}),$$
where $\epsilon=(1-\xi)(\delta\lambda_1-\gamma\lambda_2)+\eta(\xi-\eta-1)(\delta-\gamma)\in\mathbb{F}$.
Therefore $\gamma=\delta=0$ and then $p_1=0$.
Hence the lemma is proved.
\end{proof}

Hence it suffices to consider the case with $\dim M\geq6$
\begin{claim}\label{lem5}
\sl
$M$ satisfies an odd relation.
\end{claim}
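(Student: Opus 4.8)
The plan is to argue by contradiction. Assuming $M$ satisfies an even relation, I would show this forces $(\xi,\eta,\lambda_1)=(2,\tfrac12,1)$; but then, since $\adim=5\le 5$, Step~\ref{propminf} would make $M$ a quotient of $\infalg$, contrary to the hypothesis of this subsection that $M$ is not a quotient of $\infalg$. Hence $M$ cannot satisfy an even relation, so it satisfies an odd one. By the lemma classifying the linear relations among the $a_i$, an even relation at $\adim=5$ may, after normalizing the coefficient of $a_3+a_{-2}$ to $1$, be written
$$a_3+a_{-2}+\alpha(a_2+a_{-1})+\beta(a_1+a_0)=0$$
for some $\alpha,\beta\in\mathbb{F}$.

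The first step is to multiply this relation by $p_1$. Since $p_{1,j}=p_1$ is $G$-invariant, formula (\ref{multiaipij}) gives $p_1a_j=(\xi-\eta)p_1+\big((1-\xi)\lambda_1+\eta(\xi-\eta-1)\big)a_j+\tfrac12\eta(\xi-\eta)(a_{j+1}+a_{j-1})$ for every $j\in\mathbb{Z}$. In the product $p_1\cdot(\text{relation})$ the $a_j$-terms vanish by the relation itself, while the $(a_{j+1}+a_{j-1})$-contributions, which involve the new axes $a_4$ and $a_{-3}$, are eliminated using the images of the relation under $\tau_0$, $\tau_1$ and $\theta$; after cancellation only a multiple of $p_1$ survives, giving $2(1+\alpha+\beta)(\xi-\eta)p_1=0$. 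As $\xi\neq\eta$ and $p_1\neq0$ by Lemma~\ref{lem3} (since $\dim M\ge 3$), this yields $\beta=-1-\alpha$. Multiplying the relation instead by $a_0$ and using $a_0a_{\pm1}=p_1+\eta(a_0+a_{\pm1})$, $a_0a_{\pm2}=p_{2,0}+\eta(a_0+a_{\pm2})$ (from the $\tau_0$-invariance of $p_1$ and of $p_{2,0}$) and $a_0a_3=p_{3,0}+\eta(a_0+a_3)$, one then expresses $p_{3,0}$ as an explicit linear combination of $p_1$, $p_{2,0}$ and $a_0$.

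It remains to pin down $\alpha,\xi,\eta,\lambda_1$. Here I would compute $a_0p_{3,0}$ in two ways: directly from the linear expression just found, using the already known products $a_0p_1$, $a_0p_{2,0}$, $a_0^2$; and independently via the Seress condition and the elements $x_i,y_i,z_i$ of Section~2, by the same fusion-rule computation used earlier to determine $a_0p_{2,1}$. I would also record the flip-conjugate identity for $p_{3,1}$. Feeding in the $G$-invariance of $p_1^2$, the flip-invariance of $p_1p_{2,0}$, and the Jordan-type structure of $\langle a_0,a_2\rangle_{alg}$ and $\langle a_0,a_3\rangle_{alg}$ (Lemma~\ref{lemhall}), and using that $\{a_0,a_1,a_{-1},a_2,a_{-2}\}$ is a basis of $\saxes$ by Lemma~\ref{lem1} while $\dim M\ge 6$ forces $p_1\notin\saxes$, each comparison yields polynomial constraints on $(\alpha,\xi,\eta,\lambda_1)$. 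Solving the resulting system should leave only $(\xi,\eta,\lambda_1)=(2,\tfrac12,1)$, the desired contradiction.

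The conceptual ingredients are exactly those already used for $\adim\le 4$; the main obstacle is purely computational — pushing the second-order products $a_0p_{3,0}$, $p_1p_{2,0}$, $p_{2,0}^2$ through the fusion rule in a fixed basis, keeping track of the $p_{3,0},p_{3,1}$ orbit, and verifying that the resulting polynomial system in $(\alpha,\xi,\eta,\lambda_1)$ admits no solution besides $(2,\tfrac12,1)$.
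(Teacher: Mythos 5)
Your overall strategy is exactly the paper's: assume an even relation, show it forces $(\xi,\eta,\lambda_1)=(2,\tfrac12,1)$, and contradict Step~\ref{propminf} together with the standing hypothesis that $M$ is not a quotient of $\infalg$. Your normalization of the even relation is correct, and the two preliminary deductions you actually carry out are sound: multiplying the relation by $p_1$ via (\ref{multiaipij}) (using that the shifted relations $\sum_jc_ja_{j\pm1}=0$ also hold and that $p_1\neq0$ by Lemma~\ref{lem3} and Claim~\ref{lem31}) legitimately yields $\alpha+\beta+1=0$, and multiplying by $a_0$ expresses $p_{3,0}$ through $p_1$, $p_{2,0}$, $a_0$. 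These are even slightly different from the paper's opening move, which instead uses the flip- and $\tau_0$-invariance of $a_{-1}a_2$ to obtain the dichotomy $(\alpha,\beta)=(-1,0)$ or $p_{2,0}=p_{2,1}$.

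The problem is that everything after that is a plan, not a proof. The entire content of this claim lies in verifying that the "resulting polynomial system" admits no solution other than $(2,\tfrac12,1)$, and you only assert that it "should." The paper's argument here is not a single mechanical elimination: it branches on the dichotomy above, and in each branch requires further structural input --- in the branch $(\alpha,\beta)=(-1,0)$ one first gets $\eta=\tfrac12$ from $a_2a_{-2}=(1-\eta)a_2+\eta a_{-2}$, then $\lambda_2=1$ and $\xi=2$ from the subalgebra $\langle a_0,a_2\rangle_{alg}$, and then $\lambda_1=1$ by a subcase analysis on whether $\dim M=6$ or $\dim M\geq7$ (using the possible forms of $p_{2,0}$ in $\eisp{a_0}{0}$ or $\eisp{a_0}{\xi}$); in the branch $p_{2,0}=p_{2,1}$ one must first rule out $\dim M=6$ (else $\adim\leq4$), then extract $\xi=4\eta$ from the coefficients of $a_0p_{2,1}$ before again invoking $\langle a_0,a_2\rangle_{alg}$. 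None of these intermediate facts, nor any substitute for them, appears in your proposal; also note that $\langle a_0,a_3\rangle_{alg}$ need not be of Jordan type here (the even relation does not give $a_3=a_{-3}$), so that ingredient of your plan is not available as stated. Until the elimination is actually performed, the claim remains unproved.
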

\begin{proof}
Assume that $M$ satisfies an even relation.
There exist $\alpha,\beta\in\mathbb{F}$ such that $a_3+a_{-2}+\alpha(a_2+a_{-1})+\beta(a_1+a_0)=0$.
Then $a_{-1}a_2=-(\alpha+\beta)p_1-(\alpha+1)p_{2,1}+\eta a_2-((2\alpha+1)\eta+\beta)a_{-1}.$
By the flip-invariance, $(\alpha+1)(p_{2,0}-p_{2,1})+(2(\alpha+1)\eta+\beta)(a_2-a_{-1})=0$.
By the $\tau_0$-invariance, $(\alpha,\beta)=(-1,0)$ or $p_{2,1}=p_{2,0}$.

Assume $(\alpha,\beta)=(-1,0)$.
Since $a_2a_{-2}=(1-\eta)a_2+\eta a_{-2}$, $\eta=\frac{1}{2}$.
By considering the structure of $\langle a_0,a_2\rangle_{alg}$, $\lambda_2=1$ and $\xi=2$.
If $\dim M=6$, then $\dim \eisp{a_0}{0}$ or $\dim \eisp{a_0}{\xi}$ is 1 and  there exist $\gamma,\delta\in\mathbb{F}$ such that $p_{2,0}=\gamma p_1+\delta(a_2+a_{-2}-2(a_1+a_{-1})+2a_0)$.
Then $\lambda_1=1$ in both cases because 
$$\gamma(p_1-(\lambda_1-\frac{1}{2})a_0+\frac{1}{4}(a_1+a_{-1}))=p_{2,0}-\frac{1}{2}a_0+\frac{1}{4}(a_2+a_-2)$$
or
$$\gamma(p_1+(\lambda_1+\frac{1}{2})a_0-\frac{3}{4}(a_1+a_{-1}))=p_{2,0}+\frac{3}{2}a_0+\frac{3}{4}(a_1+a_{-1}).$$
If $\dim M\geq7$, then there exist $\delta\in\mathbb{F}$ such that $p_{2,1}=p_{2,0}+\delta(a_2+a_{-2}-2(a_1+a_{-1})+2a_0)$.
Then by considering $a_0p_{2,1}$, $\lambda_1=1$.
So this case is not appropriate.

Assume $p_{2,0}=p_{2,1}$.
If $\dim M=6$, then $p_{2,0}\in\mathbb{F}p_1$ by the invariance  and then $\adim\leq4$.
So $\dim M\geq7$.
Then by the coefficients of $a_0p_{2,1}$, $\xi=4\eta$ and $\lambda_1=\frac{30\eta^3-4\eta}{2(8\eta-1)}$.
By considering the structure of $\langle a_0,a_2\rangle_{alg}$, $\lambda_2=1$ and $\xi=2$.
Thus $\lambda_1=1$ and this case is not appropriate.
Hence $M$ satisfies an odd relation.
\end{proof}

Thus we may assume that $M$ satisfies an odd relation.
Then there exist $\alpha,\beta\in\mathbb{F}$ such that $a_3-a_{-2}+\alpha(a_2-a_{-1})+\beta(a_1-a_0)=0$.
Then $a_2a_{-1}=(\alpha-\beta)p_1+(1-\alpha)p_{2,1}+\eta a_2+((1-2\eta)\beta+\eta)a_{-1}$.
By the flip-invariance, $(1-\alpha)(p_{2,0}-p_{2,1})=(2\eta-1)\beta(a_2-a_{-1})$.
By the $\tau_0$-invariance, $(2\eta-1)\beta=0$.
By the $\tau_0$-invariance of 
\begin{eqnarray*}
a_2a_{-2}&=&(\alpha\beta-\alpha^2+\beta+1)p_1+\alpha(\alpha-1)p_{2,1}-\beta p_{2,0}\\
&&+((1-2\eta)\alpha+\eta)a_2+\eta a_{-2}-(1-2\eta)\alpha\beta a_{-1},
\end{eqnarray*}
$\eta=\frac{1}{2}$ or $\alpha=\beta=0$.
By a similar argument as Lemma \ref{lem31}, $p_{2,0}-p_{2,1}=0$ from its invariance.

\begin{claim}
\label{lem51}
\sl
Let $\alpha=\beta=0$.
Then $M$ is isomorphic to $\fiveone{\xi}$.
\end{claim}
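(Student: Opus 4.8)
The plan is to upgrade the relation $\alpha=\beta=0$ to a full pentagonal symmetry and then determine the remaining parameters from the fusion rule. From $a_3=a_{-2}$, repeatedly applying $\tau_0$, $\tau_1$ and the flip yields $a_{i+5}=a_i$ for all $i\in\mathbb{Z}$; hence $\{a_{-2},a_{-1},a_0,a_1,a_2\}$ spans $\saxes$ and, by $\adim=5$ and Lemma \ref{lem1}, is a basis, while $G=\langle\tau_0,\theta\rangle$ acts on it as the dihedral group of order $10$. Since $a_{i\pm3}=a_{i\mp2}$, the only products of axes occurring are $a_ia_{i\pm1}=p_1+\eta(a_i+a_{i\pm1})$ and $a_ia_{i\pm2}=p_2+\eta(a_i+a_{i\pm2})$ with $p_2:=p_{2,0}=p_{2,1}$, and both $p_1$ and $p_2$ are $G$-invariant. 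Therefore $M=\saxes+\mathbb{F}p_1+\mathbb{F}p_2$, so $6\le\dim M\le7$, the lower bound being the standing hypothesis $\dim M\ge6$.

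First I would show $p_1\notin\saxes$: the $G$-fixed line of the $5$-dimensional module $\saxes$ is $\mathbb{F}(a_{-2}+a_{-1}+a_0+a_1+a_2)$, so $p_1\in\saxes$ would make $p_1$ a scalar multiple of that vector, contradicting the shape of $a_0p_1$ given by \ref{multiaipij} unless $p_1=0$, which is impossible by Lemma \ref{lem3}. Next I would rule out $\dim M=7$. Assume $\xi\ne2\eta$ (the branch $\xi=2\eta$ is handled separately: there $p_{2,0}$ already lies in $\mathbb{F}p_1\oplus\saxes$ by the formula available in that case, and $\eta=\frac{5\xi-1}{8}$ together with $\xi=2\eta$ would force $\xi=1$, so it yields no new algebra). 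Compute $a_0p_2$ in two ways — via \ref{multiaipij} applied to $a_0p_{2,0}$ and via \ref{multia0p21} applied to $a_0p_{2,1}$ — and equate the results; comparing the coefficients of $p_2$ shows that either $\xi=4\eta$, a degenerate branch I would dispose of by checking that it collapses to a quotient of $\infalg$, to a primitive axial algebra of Jordan type, or to the excluded triple $(\xi,\eta,\lambda_1)=(2,\frac{1}{2},1)$, or else $\{p_1,p_2,a_{-2},\dots,a_2\}$ is linearly dependent, i.e.\ $\dim M=6$ and $p_2$ is an explicit combination of $p_1$ and the axes.

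With $\dim M=6$ and $\{p_1,a_{-2},\dots,a_2\}$ fixed as a basis, I would then impose the fusion rule for $a_0$. Using the eigenvectors $x_1\in\eisp{a_0}{\xi}$, $z_1\in\eisp{a_0}{0}$, $y_1=a_1-a_{-1}\in\eisp{a_0}{\eta}$ introduced in Section 2 together with the Seress condition and Lemma \ref{lem2}, the conditions that $z_1z_1$ and $x_1x_1-z_1z_1-\varphi_0(x_1x_1)a_0$ lie in $\eisp{a_0}{0}$, and the analogous closure conditions for the other eigenspaces, become polynomial identities in $\xi,\eta,\lambda_1$; solving them forces $\eta=\frac{5\xi-1}{8}$ and an explicit value $\lambda_1=\lambda_1(\xi)$. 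Computing $p_1p_1$ from Lemma \ref{lem2} then makes the entire multiplication table coincide with the one defining $\fiveone{\xi}$, so the assignment $\hat a_i\mapsto a_i$, $\hat p_1\mapsto p_1$ extends to a surjective homomorphism $\fiveone{\xi}\to M$; since both algebras have dimension $6$, it is an isomorphism. The restriction $\xi\notin\{0,1,\frac{-1}{3},\frac{1}{5},\frac{9}{5}\}$ is exactly $\xi\notin\{0,1\}$ together with $\eta\notin\{0,1,\xi\}$ rewritten via $\eta=\frac{5\xi-1}{8}$. The main obstacle is the computation in these last two steps: extracting the precise relation $\eta=\frac{5\xi-1}{8}$ and the value of $\lambda_1$ from the eigenspace-closure identities, and verifying that the degenerate branches $\xi=2\eta$ and $\xi=4\eta$ genuinely reduce to algebras classified earlier rather than producing a new one.
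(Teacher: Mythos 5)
Your outline follows the paper's strategy in its broad strokes --- kill $\dim M\ge 7$ by comparing the two expressions for $a_0p_{2,0}=a_0p_{2,1}$ to force $\xi=4\eta$ and a value of $\lambda_1$, then in dimension $6$ extract $\eta=\frac{5\xi-1}{8}$ and $\lambda_1$ and map $\fiveone{\xi}$ onto $M$ --- but the two steps you defer as ``the main obstacle'' are where the proof actually lives, and in both places the resolution you propose is not the one that works. The $\xi=4\eta$ branch of the $\dim M\ge7$ case does not collapse onto a quotient of $\infalg$, a Jordan-type algebra, or the excluded triple $(\xi,\eta,\lambda_1)=(2,\frac{1}{2},1)$: the value of $\lambda_1$ you have just computed there, $\lambda_1=\frac{30\eta^3-4\eta}{2(8\eta-1)}$, is not $1$ at $\eta=\frac{1}{2}$, so that escape route is closed. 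The paper instead kills the branch head-on: it introduces $q=p_1+p_{2,0}-\frac{3\eta}{2}(a_2+a_{-2}+a_1+a_{-1}+a_0)$, which by Lemma \ref{lem2} acts as a scalar on the span of the axes and the $p$'s, and evaluates two further fusion-rule constraints (a square and a product required to lie in $\eisp{a_0}{0}\oplus\eisp{a_0}{1}$) to obtain $\lambda_2=\frac{14\eta^2-\eta}{8\eta-1}$ together with $\eta=\frac{1}{2}$, a contradiction. Without some such computation the case $\dim M\ge7$ is simply not eliminated.

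In the $\dim M=6$ case your plan ``impose the closure identities and solve'' hides a case split that changes the answer. Since $\eisp{a_0}{1}$ is a line and $\eisp{a_0}{\eta}$ contains $a_1-a_{-1}$ and $a_2-a_{-2}$, exactly one of $\eisp{a_0}{0}$, $\eisp{a_0}{\xi}$ is one-dimensional, and the relation $p_{2,0}=\gamma p_1+\delta(a_2+a_{-2}+a_1+a_{-1}+a_0)$ is pinned down differently in the two cases. Only $\dim\eisp{a_0}{\xi}=1$ gives $(\gamma,\delta)=(-1,-\frac{\eta}{2})$ and $a_2+a_{-2}+a_1+a_{-1}-3\eta a_0\in\eisp{a_0}{0}$, hence $p_1(a_2+a_{-2}+a_1+a_{-1}+a_0)=(3\eta+1)p_1$, and from there $\eta=\frac{5\xi-1}{8}$, $\lambda_1=\lambda_2=\frac{3\eta}{4}$ and the isomorphism with $\fiveone{\xi}$. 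The branch $\dim\eisp{a_0}{0}=1$ leads somewhere else entirely ($\chf{F}=5$ and $\eta=\frac{1}{2}$) and must be recognized and discarded separately; a uniform ``solve the polynomial system'' cannot return the single conclusion $\eta=\frac{5\xi-1}{8}$. (A smaller point: your a priori bound $\dim M\le 7$ needs the observation that $a_3=a_{-2}$ forces $p_{3,0}=p_{2,0}$ and $p_{4,0}=p_{1,0}$, so that the fusion-rule expressions for the products $p_{i,0}p_{j,0}$ stay inside $\mathbb{F}p_1+\mathbb{F}p_{2,0}+\saxes$; this is true but not free.)
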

\begin{proof}
If $\dim M\geq 7$, then $p_{2,0}\notin\mathbb{F}p_1+\sum_{i\in\mathbb{Z}}\mathbb{F}a_i$.
Since $a_0(p_{2,1}-p_{2,0})=0$, $\xi=4\eta$ and $\lambda_1=\frac{30\eta^3-4\eta}{2(8\eta-1)}$.
Set $q=p_1+p_{2,0}-\frac{3\eta}{2}(a_2+a_{-2}+a_1+a_{-1}+a_0)$.
Then for all $w\in\mathbb{F}p_{2,0}+\mathbb{F}p_1+\sum_{i\in\mathbb{Z}}\mathbb{F}a_i$, $qw=((1-4\eta)(\lambda_1+\lambda_2)-\frac{7\eta}{2})w$.
Since 
$$(q-(\lambda_1+\lambda_2-\frac{7\eta}{2})a_0+2\eta(a_2+a_{-2}+a_1+a_{-1}))^2\in \eisp{a_0}{0}\oplus \eisp{a_0}{1}$$
and
\begin{eqnarray*}
&(&q-(\lambda_1+\lambda_2-\frac{7\eta}{2})a_0+2\eta(a_2+a_{-2}+a_1+a_{-1}))\\
&&\cdot(p_1-(\lambda_1-\eta)a_0+\frac{\eta}{2}(a_1+a_{-1}))\\
&&\in \eisp{a_0}{0}\oplus \eisp{a_0}{1},
\end{eqnarray*}
$\lambda_2=\frac{14\eta^2-\eta}{8\eta-1}$ and $\eta=\frac{1}{2}$.
Thus this case does not hold.

So we may assume $\dim M=6$.
Then there exist $\gamma,\delta\in\mathbb{F}$ such that $p_{2,0}=\gamma p_1+\delta(a_2+a_{-2}+a_1+a_{-1}+a_0)$ and either $\dim \eisp{a_0}{0}$ or $\dim \eisp{a_0}{\xi}$ is 1.
If $\dim \eisp{a_0}{0}=1$, then
$$\gamma(p_1-((1-\xi)\lambda_1-\eta)a_0-\frac{\xi-\eta}{2}(a_1+a_{-1}))=p_{2,0}-((1-\xi)\lambda_2-\eta)a_0-\frac{\xi-\eta}{2}(a_2+a_{-2}).$$
Therefore $\lambda_1+\lambda_2=\frac{\xi+3\eta}{2(1-\xi)}$, $(\gamma,\delta)=(-1,\frac{\xi-\eta}{2})$ and $(1-\xi)(a_2+a_{-2}+a_1+a_{-1})-(\xi+3\eta)a_0\in \eisp{a_0}{\xi}$.
Since  
\begin{eqnarray*}
&(&(1-\xi)(a_2+a_{-2}+a_1+a_{-1})-(\xi+3\eta)a_0)\\
&&\cdot(p_1-((1-\xi)\lambda_1-\eta)a_0-\frac{\xi-\eta}{2}(a_1+a_{-1}))\\
&&\in \eisp{a_0}{\xi}
\end{eqnarray*}
and 
$$((1-\xi)(a_2+a_{-2}+a_1+a_{-1})-(\xi+3\eta)a_0)^2\in M_{0}(a_0)\oplus \eisp{a_0}{1},$$
$\mathrm{ch}\mathbb{F}=5$ and $\eta=\frac{1}{2}$.

Thus we may assume $\dim \eisp{a_0}{\xi}=1$.
Then 
$$\gamma(p_1-(\lambda_1-\eta)a_0+\frac{\eta}{2}(a_1+a_{-1}))=p_{2,0}-(\lambda_2-\eta)a_0+\frac{\eta}{2}(a_2+a_{-2}).$$
Therefore $\lambda_1+\lambda_2=\frac{3\eta}{2}$, $(\gamma,\delta)=(-1,\frac{-\eta}{2})$ and $(a_2+a_{-2}+a_1+a_{-1})-3\eta a_0\in M_{0}(a_0)$.
Thus $a_i(a_2+a_{-2}+a_1+a_{-1}+a_0)=(3\eta+1)a_i$ for all $i$ and then $p_1(a_2+a_{-2}+a_1+a_{-1}+a_0)=(3\eta+1)p_1$.
Therefore $\eta=\frac{5\xi-1}{8}$ and $\lambda_1=\lambda_2=\frac{3\eta}{4}$.
Then 
$$p_1^2=\frac{-5(3\xi+1)(5\xi-1)}{128}p_1-\frac{(7\xi-3)(3\xi+1)(5\xi-1)}{2048}(a_2+a_{-2}+a_1+a_{-1}+a_0)$$
and $M$ is isomorphic to $\fiveone{\xi}$.
\end{proof}
Since $\eta\notin\{0,1,\xi\}$, $\xi\notin\{\frac{1}{5},\frac{9}{5},\frac{-1}{3}\}$ in this case.

\begin{claim}
\label{lem52}
\sl
Let $\eta=\frac{1}{2}$.
Then $M$ is isomorphic to $\fivetwo{\xi}$.
\end{claim}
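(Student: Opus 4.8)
The plan is as follows. By hypothesis $\adim=5$, $\dim M\ge 6$, the algebra $M$ satisfies the odd relation $a_3-a_{-2}+\alpha(a_2-a_{-1})+\beta(a_1-a_0)=0$ for some $\alpha,\beta\in\mathbb F$, $\eta=\tfrac12$, $p_{2,0}=p_{2,1}$, and (standing assumption) $M$ is not a quotient of $\infalg$. I would first pin down $\dim M$, then locate the $G$‑invariant axis‑combination hidden inside $p_{2,0}$, then extract all structure constants from the fusion rule, and finally recognise $M$ as $\fivetwo{\xi}$.

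\emph{Reducing to $\dim M=6$.} Suppose $\dim M\ge 7$; then $p_{2,0}\notin\mathbb F p_1\oplus\saxes$, so $p_1,p_{2,0}$ and the $a_i$ are independent. Expanding $a_0p_{2,0}=a_0p_{2,1}$ in two ways — by \eqref{multiaipij}, where the coefficient of $p_{2,0}$ is $\xi-\tfrac12$, and by \eqref{multia0p21}, valid since $\xi\ne 1=2\eta$, where the coefficient of $p_{2,0}$ is $\tfrac{2\xi-1}{2(\xi-1)}$ — and using $\xi\ne\tfrac12$ gives $\xi=2$; comparing then the coefficients of $p_1$ gives $\lambda_1=1$. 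Thus $(\xi,\eta,\lambda_1)=(2,\tfrac12,1)$, so by Step~\ref{propminf} (which applies since $\adim=5$) $M$ would be a quotient of $\infalg$, a contradiction. Hence $\dim M=6$; by Lemma~\ref{lem3} $p_1\ne 0$, so $p_1\notin\saxes$, $M=\mathbb F p_1\oplus\saxes$, and $\{a_1,\dots,a_5\}$ is a basis of $\saxes$ (Lemma~\ref{lem1}).

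\emph{The invariant combination and the structure constants.} Since $p_{2,0}$ is fixed by $\tau_0$ and, because $\theta(p_{2,0})=p_{2,1}=p_{2,0}$, also by $\theta$, it is $G$‑invariant; writing $p_{2,0}=\gamma p_1+v$ with $v\in\saxes$, the element $v$ is $G$‑invariant, and a direct computation using the relation shows that the $G$‑invariant elements of $\saxes$ are exactly the scalar multiples of $w=a_2+a_{-2}+(\alpha+1)(a_1+a_{-1})+(\alpha+\beta+1)a_0$. If $v=0$ then, using that $\eisp{a_0}{\xi}$ contains $x_1,x_2$ (resp.\ $\eisp{a_0}{0}$ contains $z_1,z_2$), the relation $x_2\in\mathbb F x_1$ (resp.\ $z_2\in\mathbb F z_1$) would force $a_2+a_{-2}\in\mathbb F a_0+\mathbb F(a_1+a_{-1})$, hence $\adim\le 4$, a contradiction; so $v\ne 0$ and $p_{2,0}=\gamma p_1+\delta w$ with $\delta\ne 0$. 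As in Claim~\ref{lem51}, one of $\dim\eisp{a_0}{0}$, $\dim\eisp{a_0}{\xi}$ equals $1$. When $\dim\eisp{a_0}{\xi}=1$ we have $\eisp{a_0}{\xi}=\mathbb F x_1$ and $x_2\in\mathbb F x_1$, which expresses $\gamma,\delta,\lambda_2$ in terms of $\lambda_1,\alpha,\beta$; comparing the two expansions of $a_0p_{2,0}$ — one via \eqref{multiaipij}, the other via $p_{2,0}=\gamma p_1+\delta w$ together with $a_0a_j=p_{j,0}+\eta(a_0+a_j)$, $p_{\pm1,0}=p_1$, $p_{\pm2,0}=p_{2,0}$, everything re‑expanded in the basis $\{p_1,a_1,\dots,a_5\}$ — yields further linear constraints. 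Imposing the fusion condition $x_1^2\in\eisp{a_0}{0}\oplus\eisp{a_0}{1}$ and the $G$‑invariance of $p_1^2$ (computed through the Seress condition, as in the earlier claims) then forces $\lambda_1=\lambda_2=1$ and $(\alpha,\beta)=(-5,10)$, i.e.\ the recursion $a_{i+5}=a_i+5(a_{i+4}-a_{i+1})-10(a_{i+3}-a_{i+2})$, together with
\[p_1^2=\frac{(2\xi-1)(2\xi-3)}{32}\bigl(a_2+a_{-2}-4(a_1+a_{-1})+6a_0\bigr).\]
In the remaining case $\dim\eisp{a_0}{0}=1$ the analogous computation (with $z_1,z_2$ in place of $x_1,x_2$) produces no admissible $\xi$ — it collapses either to $\adim<5$ or to $\chf{F}=5$ with a further degeneration — so it does not occur.

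\emph{Conclusion.} All products among $a_0,a_1,p_1$ being now determined, the assignments $\hat a_i\mapsto a_i$, $\hat p_1\mapsto p_1$ extend to a surjective algebra homomorphism $\fivetwo{\xi}\to M$; since both algebras are $6$‑dimensional with axial dimension $5$ it is an isomorphism, and $\eta=\tfrac12\ne\xi$ forces $\xi\notin\{0,1,\tfrac12\}$. The main obstacle, as in the preceding claims, is the bookkeeping in the middle step: reducing everything modulo the relation when $G$ is \emph{infinite} — the recursion has characteristic polynomial $(x-1)^5$ once $(\alpha,\beta)=(-5,10)$, so the $G$‑invariant line must be located through the unipotent structure of $\tau_1\tau_0$ rather than by averaging over a finite orbit — and then extracting the exact values $\lambda_1=\lambda_2=1$, $\alpha=-5$, $\beta=10$ from the fusion relations without circular use of $p_{2,0}$.
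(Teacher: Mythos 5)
Your skeleton matches the paper's: reduce to $\dim M=6$ by showing $\dim M\ge7$ forces $(\xi,\eta,\lambda_1)=(2,\tfrac12,1)$ via the $p_{2,0}$- and $p_1$-coefficients of $a_0(p_{2,1}-p_{2,0})=0$ and then invoking Step~\ref{propminf}; write $p_{2,0}=\gamma p_1+\delta\,w$ with $w=a_2+a_{-2}+(\alpha+1)(a_1+a_{-1})+(\alpha+\beta+1)a_0$; split on which of $\eisp{a_0}{0}$, $\eisp{a_0}{\xi}$ is one-dimensional; and your endpoint ($\lambda_1=\lambda_2=1$, $(\alpha,\beta)=(-5,10)$, the stated $p_1^2$) is the right one. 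But the decisive middle step is a genuine gap, and you flag it yourself as ``the main obstacle.'' You assert that $x_1^2\in\eisp{a_0}{0}\oplus\eisp{a_0}{1}$, the $G$-invariance of $p_1^2$, and two expansions of $a_0p_{2,0}$ ``force'' the exact values, but you never exhibit the computation, and it is not at all clear that this particular list of constraints suffices. The paper gets there by a different and much cheaper route: from $x_2\in\mathbb{F}x_1$ it reads off $(\gamma,\delta)$ and one linear relation among $\lambda_1,\lambda_2,\alpha,\beta$; it then computes $a_0w=\frac{3\alpha+\beta+5}{2}a_0$, propagates this to $p_1w$ via Lemma~\ref{lem2} (the Seress condition) to force $3\alpha+\beta+5=0$, concludes that $\mathbb{F}w$ is an ideal, and applies the already-established Step~\ref{prop4e} to the quotient $M/\mathbb{F}w$ --- an axial-dimension-$4$ algebra with an even relation, hence $\fivetwo{\xi}^{\times}$ --- which is precisely what pins down $\alpha=-5$ (so $\beta=10$) and $\lambda_1=1$. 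That quotient-by-the-invariant-ideal argument is the idea your proposal is missing; without it you owe a substantial direct verification that you have not supplied.

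Second, your dismissal of the case $\dim\eisp{a_0}{0}=1$ (``collapses either to $\adim<5$ or to $\chf{F}=5$ with a further degeneration'') is both unsupported and names the wrong obstruction. In the paper this case is run through the same ideal/quotient machinery and is excluded because every branch forces $\xi=2$ and hence a quotient of $\infalg$, an argument valid in all characteristics. An exclusion that only yields ``$\chf{F}=5$'' would not even be a contradiction here, since this claim must also serve Theorem~2, which covers characteristic $5$ with $\adim=5$.
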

\begin{proof}
If $\dim M\geq7$, then $\xi=2$ and $\lambda_1=1$ by the same argument as above.
So we may assume $\dim M=6$.
Then there exist $\gamma,\delta\in\mathbb{F}$ such that $p_{2,0}=\gamma p_1+\delta(a_2+a_{-2}+(\alpha+1)(a_1+a_{-1})+(\alpha+\beta+1)a_0)$ and either $\dim \eisp{a_0}{0}$ or $\dim \eisp{a_0}{\xi}$ is 1.

Assume $\dim \eisp{a_0}{\xi}=1$.
Since 
$$\gamma(p_1-(\lambda_1-\eta)a_0+\frac{\eta}{2}(a_1+a_{-1}))=p_{2,0}-(\lambda_2-\eta)a_0+\frac{\eta}{2}(a_2+a_{-2}),$$
$(\gamma,\delta)=(-(\alpha+1),\frac{1}{4})$ and $(\alpha+1)\lambda_1+\lambda_2=\frac{\alpha-\beta+3}{4}$.
Hence $a_0(a_2+a_{-2}+(\alpha+1)(a_1+a_{-1})+(\alpha+\beta+1)a_0)=\frac{3\alpha+\beta+5}{2}a_0$.
Thus $p_1(a_2+a_{-2}+(\alpha+1)(a_1+a_{-1})+(\alpha+\beta+1)a_0)=\frac{3\alpha+\beta+5}{2}p_1$ and $3\alpha+\beta+5=0$.
By considering the quotient, $\alpha=-5$ or $\xi=2$.
If $\xi=2$, $M$ is a quotient of $\infalg$.
If $\alpha=-5$, then $p_1^2=\frac{(2\xi-1)(2\xi-3)}{32}(a_2+a_{-2}-4(a_{1}+a_{-1})+6a_0)$.
Thus there exists a surjective homomorphism from $\fivetwo{\xi}$ to $M$ such that $\hat{a}_i\mapsto a_i$ and $\hat{p}_i\mapsto p_1$.
Since there exist no appropriate ideal, the surjection is an isomorphism.

Assume $\dim \eisp{a_0}{0}=1$.
Since
$$\gamma(p_1-((1-\xi)\lambda_1-\eta)a_0-\frac{\xi-\eta}{2}(a_1+a_{-1}))=p_{2,0}-((1-\xi)\lambda_2-\eta)a_0-\frac{\xi-\eta}{2}(a_2+a_{-2}),$$
$(\gamma.\delta)=(-(\alpha+1),\frac{2\xi-1}{4})$ and $(\alpha+1)\lambda_1+\lambda_2=\frac{(2\xi+1)\alpha+(2\xi-1)\beta+2\xi+3}{4(1-\xi)}$.
Since 
\begin{eqnarray*}
&(&a_2+a_{-2}+(\alpha+1)(a_1+a_{-1})-((\alpha+1)\lambda_1+\lambda_2)a_0)\\
&&\cdot(p_1-((1-\xi)\lambda_1-\frac{1}{2})a_0-\frac{2\xi-1}{4}(a_1+a_{-1}))\\
&&\in \eisp{a_0}{\xi},
\end{eqnarray*}
$\beta=-3\alpha-5$.
Then $\mathbb{F}(a_2+a_{-2}+(\alpha+1)(a_1+a_{-1})+(\alpha+\beta+1)a_0)$ is an ideal.
By the structure of the quotient, $\alpha=-5$ or $\xi=2$.
If $\xi=2$, then $M$ is a quotient of $\infalg$.
If $\alpha=-5$, then $p_1^2=\frac{3-6\xi}{32}(a_2+a_{-2}-4(a_1+a_{-1})+6a_0)$.
Since $(p_1+\frac{2\xi-1}{2}a_0-\frac{2\xi-1}{4}(a_1+a_{-1}))^2\in \eisp{a_0}{0}$, $\xi=2$.
\end{proof}

Thus, the proof of Step \ref{prop5} is completed.

\subsection{Proof of Step \ref{prop6}}
Assume $\adim\ge6$ and $\chf{F}\neq5$.
\begin{claim}
\label{lem64}
\sl
Let $\xi=4\eta$.
Then $M$ is isomorphic to $\rmsix_2(\frac{1}{3},\frac{1}{12})$.
\end{claim}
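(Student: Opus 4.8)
The plan is to show that, under $\xi=4\eta$ together with the standing hypotheses of Step \ref{prop6} ($\adim\ge 6$, $\chf{F}\neq5$, and $M$ not a quotient of $\infalg$), the parameters are forced all the way to $(\xi,\eta)=(\frac13,\frac1{12})$, and that $M$ is then the universal algebra $\rmsix_2(\frac13,\frac1{12})$. The key first observation is that $\xi=4\eta$ kills the coefficient $\frac{\xi-4\eta}{4}$ in (\ref{p2linrel}), so $p_{2,1}-p_{2,0}=c_1(a_1-a_0)+c_2(a_2-a_{-1})$ for some $c_1,c_2\in\mathbb{F}$. Applying $\tau_0$, which fixes $p_{2,1}-p_{2,0}$ and sends $a_i\mapsto a_{-i}$, and rearranging, one gets $(c_1+c_2)(a_1-a_{-1})+c_2(a_2-a_{-2})=0$; since $\adim\ge6$, Lemma \ref{lem1} (applied after the automorphism $f_{-3}$) shows $a_{-2},a_{-1},a_1,a_2$ lie in a basis of $\saxes$, so $c_1=c_2=0$ and $p_{2,0}=p_{2,1}$. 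As $\theta$ interchanges $p_{2,0}$ and $p_{2,1}$ while $\tau_0$ fixes $p_{2,0}$, the element $p_{2,0}$ is now $G$-invariant.

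Next I would pin the parameters down by writing out the identity $a_0p_{2,0}=a_0p_{2,1}$, computing the right-hand side from the formula (\ref{multia0p21}) for $a_0p_{2,1}$ (legitimate since $\xi-2\eta=2\eta\neq0$) and the left-hand side from (\ref{multiaipij}) with $i=2$, $j=0$. By Lemma \ref{lem3} one has $p_1\neq0$, and $p_1\notin\saxes$ (else its $G$-invariance would give $p_1=0$), so the coefficient of $p_1$ in $a_0p_{2,1}$ must vanish: this yields $2(1-8\eta)\lambda_1+30\eta^2-3\eta=0$, hence $8\eta\neq1$ (the relation reads $\frac3{32}\neq0$ when $8\eta=1$) and $\lambda_1=\frac{3\eta(1-10\eta)}{2(1-8\eta)}$. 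Substituting this back, and adjoining the multiplication rules for $p_1p_1$, $p_1p_{2,0}$ and the $|i-j|=3$ products (all now expressible through $p_1$, $p_{2,0}$ and the axes, as in the construction of $\sixtwo{\xi}$), the fusion-rule constraint on the $\xi$- and $\eta$-eigenspaces of $a_0$, and the structure of $\langle a_0,a_2\rangle_{alg}$ (classified by Step \ref{prop3} and Lemma \ref{lemhall}, which expresses $\lambda_2$ as a rational function of $\eta$), I expect a system of polynomial relations in $\eta$, $\lambda_1$, $\lambda_2$ whose only admissible solution is $\eta=\frac1{12}$, $\lambda_1=\lambda_2=\frac1{16}$. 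The competing candidate $\eta=\frac12$ (that is, $\xi=2$) is discarded because it forces $\lambda_1=1$, whence by Step \ref{propminf} $M$ would be a quotient of $\infalg$, against our assumption. Thus $(\xi,\eta)=(\frac13,\frac1{12})$.

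With $(\xi,\eta,\lambda_1)=(\frac13,\frac1{12},\frac1{16})$ fixed and $p_{2,0}=p_{2,1}$, every product $a_ia_j$ is determined --- those with $|i-j|\le2$ by (\ref{multiaipij}) and the expression for $p_{2,0}$, those with $|i-j|=3$ by the $r_{3,i}$-identity used to define $\sixtwo{\xi}$ --- so $M=\mathbb{F}p_1+\mathbb{F}p_{2,0}+\saxes$ and its structure constants agree with those of $\rmsix_2(\frac13,\frac1{12})$. This gives a surjective homomorphism $\rmsix_2(\frac13,\frac1{12})\to M$; in particular $\adim(M)\le6$, so $\adim(M)=6$. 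Finally, every proper ideal of $\rmsix_2(\frac13,\frac1{12})$ meets $\saxes$ (one checks that $\mathbb{F}\hat q$ is not an ideal since $\hat q$ does not act as $0$ when $\xi=\frac13$, and that any ideal containing $\hat p_1$ meets $\saxes$), so, as $\adim(M)\ge6$, the homomorphism has trivial kernel and $M\cong\rmsix_2(\frac13,\frac1{12})$.

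The main obstacle is the elimination carried out in the second step: assembling all the polynomial constraints (from $p_1p_1$, $p_1p_{2,0}$, the $|i-j|=3$ products, and the fusion rule), correctly splitting into the sub-cases for $\langle a_0,a_2\rangle_{alg}$ (Jordan type $\xi$, Jordan type $\eta$, or isomorphic to $\threealg{\xi}{\eta}{0}$) and the degenerate cases where an eigenspace $\eisp{a_0}{\xi}$ or $\eisp{a_0}{0}$ is small, and verifying that after substituting the resulting value of $\lambda_2$ in each case the only surviving parameter is $\eta=\frac1{12}$ --- so that the parasitic roots, the exceptional value $\xi=2$, and the forbidden parameters of $\sixtwo{\xi}$ are all excluded.
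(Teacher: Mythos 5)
Your first step is correct and is exactly the paper's (implicit) opening move: at $\xi=4\eta$ the coefficient $\frac{\xi-4\eta}{4}$ in (\ref{p2linrel}) vanishes, and $\tau_0$-invariance together with the linear independence of $a_{-2},a_{-1},a_1,a_2$ (from $\adim\ge6$ and Lemma \ref{lem1}) forces $p_{2,1}=p_{2,0}$. The first genuine gap is in the parameter elimination. You extract a single relation $2(1-8\eta)\lambda_1+30\eta^2-3\eta=0$ from the vanishing of the $p_1$-coefficient of $a_0(p_{2,1}-p_{2,0})$, and then defer to an unspecified ``system of polynomial relations'' that you ``expect'' to admit only $\eta=\frac{1}{12}$. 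The missing second equation is already available in the lemma containing (\ref{p2linrel}): once $p_{2,1}=p_{2,0}$ and the relevant axes are independent, the coefficient of $a_2-a_{-1}$ in the displayed expression for $p_{2,1}-p_{2,0}$ must vanish, which at $\xi=4\eta$ gives $-16\eta^2\lambda_1+36\eta^3-2\eta^2=0$, i.e.\ $\lambda_1=\frac{18\eta-1}{8}$. Combined with your relation this yields $(12\eta-1)(2\eta-1)=0$, and $\eta=\frac{1}{2}$ is discarded exactly as you propose, since it forces $(\xi,\eta,\lambda_1)=(2,\frac{1}{2},1)$ and hence a quotient of $\infalg$; note that $\chf{F}\neq5$ is already needed at this point because $\frac{1}{12}=\frac{1}{2}$ in characteristic $5$.

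The second and more serious gap is the final identification. Your surjection $\rmsix_2(\frac{1}{3},\frac{1}{12})\to M$ is only well defined once one knows that the axes of $M$ satisfy the relations built into $\sixtwo{\xi}$, namely $a_{-3}=a_3$ and $a_4=a_{-2}$, and that $p_{3,0}=p_{3,1}$ with the prescribed expression in terms of $p_1$, $p_{2,0}$ and the axes; your ``in particular $\adim(M)\le6$'' is therefore circular. Moreover nothing proved so far bounds $\adim$ in this case: Claim \ref{lemst6}, which gives $\adim\le6$, is established only for $\xi\neq4\eta$, its argument resting on the $a_3-a_{-2}$ term of (\ref{p2linrel}) which is exactly what dies here. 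The paper fills this in with a nontrivial chain of computations: the invariance of $p_1p_{2,0}$ produces a relation $a_4-a_{-3}+a_3-a_{-2}+\alpha(a_2-a_{-1})=0$ together with expressions for $p_{3,1}-p_{3,0}$ and $p_{3,0}$; the flip-invariance of $a_3a_{-2}$ forces $\alpha=0$ (using $\adim\ge6$ again); and the $\tau_0$-invariance of $a_3a_{-3}=\frac{11}{12}a_3+\frac{1}{12}a_{-3}$ forces $a_3=a_{-3}$, using $\chf{F}\neq5$ a second time. Your proposal omits this entire stage, so as written it does not establish the isomorphism.
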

\begin{proof}
In this case, $p_{2,1}=p_{2,0}$ and thus $\lambda_1=\frac{18\eta-1}{8}$.
Since $a_0(p_{2,1}-p_{2,0})=0$, $0\in\mathbb{F}a_0+(2(1-8\eta)\lambda_1+30\eta^3-4\eta)(2p_1+\eta(a_1+a_{-1}))$.
Thus $\eta=\frac{1}{2}$ or $\eta=\frac{1}{12}$.
By the assumption, $\eta=\frac{1}{12}$ and then $\chf{F}\neq5$.
Then $a_0(p_{3,1}+p_{3,-1})=\frac{1}{2}p_{3,0}+\frac{1}{48}(a_3+a_{-3})+\frac{5}{6}(p_1+p_{2,0}+\frac{1}{24}(a_2+a_{-2}+a_1+a_{-1}))$. By the invariance of $p_1p_{2,0}$, $a_4-a_{-3}+a_3-a_{-2}+\alpha(a_2-a_{-1})=0$ for some $\alpha\in\mathbb{F}$ and $p_{3,1}=p_{3,0}-\frac{1}{12}(a_3-a_{-2}+(\alpha-1)(a_1-a_0))$.
Therefore $p_{3,0}=-4p_{2,0}-(\alpha+4)p_1-\frac{1}{24}(a_3+a_{-3})-\frac{1}{6}(a_2+a_{-2})-\frac{\alpha+4}{24}(a_1+a_{-1}+a_0)$.
Then 
\begin{eqnarray*}
a_2a_{-3}&=&(\alpha^2+3\alpha+1)p_1+5\alpha p_{2,0}\\
&&+\frac{3\alpha+2}{24}a_3+\frac{\alpha}{24}a_{-3}+\frac{\alpha}{8}a_2+\frac{-\alpha^2+12\alpha+1}{12}a_{-2}\\
&&+\frac{3\alpha^2+2\alpha}{24}a_1+\frac{\alpha^2+4\alpha}{24}(a_{-1}+a_0).
\end{eqnarray*}
By the flip-invariance of $a_3a_{-2}$, $\frac{\alpha^2-10\alpha}{12}(a_3-a_{-2})+\frac{\alpha^2-\alpha}{12}(a_1-a_0)=0$.
Since $\adim\geq6$, $\alpha=0$.
Then $a_3a_{-3}=\frac{11}{12}a_3+\frac{1}{12}a_{-3}$.
By the $\tau_0$-invariance, $a_3=a_{-3}$ since $\chf{F}\neq5$.
Then there exists a homomorphism from $\rmsix_2(\frac{1}{3},\frac{1}{12})$ to $M$ such that $\hat{a}_i\mapsto a_i$.
Since there does not exists an appropriate ideal, this homomorphism is an isomorphism.
\end{proof}

Thus we may assume $\xi\neq4\eta$.
\begin{claim}\label{lemst6}
\sl
There exist $\alpha,\beta\in\mathbb{F}$ such that $a_3-a_{-3}+\alpha(a_2-a_{-2})+\beta(a_1-a_{-1})=0$ and one of the following conditions holds.
\begin{itemize}
\item[\rm{(i)}]$\alpha=\beta=0$
\item[\rm{(ii)}]$\eta=\frac{1}{2}$, $p_{3,0}=p_{3,1}$ and $\alpha(\beta-1)=0$.
\end{itemize}
\end{claim}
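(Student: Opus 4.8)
The statement splits into two tasks: produce a linear relation $a_3-a_{-3}+\alpha(a_2-a_{-2})+\beta(a_1-a_{-1})=0$, and then verify the dichotomy (i)/(ii) for its coefficients. The first task is short. Since $\tau_0$ fixes both $p_{2,0}$ and $p_{2,1}$, the element $p_{2,1}-p_{2,0}$ is $\tau_0$-invariant; on the other hand, by (\ref{p2linrel}) there are $c_1,c_2\in\mathbb{F}$ with
$$p_{2,1}-p_{2,0}=c_1(a_1-a_0)+c_2(a_2-a_{-1})+\tfrac{\xi-4\eta}{4}(a_3-a_{-2}).$$
Applying $\tau_0$ (which sends $a_i\mapsto a_{-i}$) to the right-hand side and subtracting the result from the displayed identity yields
$$(c_1+c_2)(a_1-a_{-1})+\bigl(c_2+\tfrac{\xi-4\eta}{4}\bigr)(a_2-a_{-2})+\tfrac{\xi-4\eta}{4}(a_3-a_{-3})=0.$$
As $\xi\neq4\eta$, the coefficient of $a_3-a_{-3}$ is a nonzero scalar, and dividing by it produces a relation of the required form, with $\alpha=1+4c_2/(\xi-4\eta)$ and $\beta=4(c_1+c_2)/(\xi-4\eta)$.

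\textbf{The dichotomy.}
Fix such an $(\alpha,\beta)$. Here I would argue as in the proof of Claim \ref{lem41}. First I establish the $p_3$-analogue of (\ref{p2linrel}): computing $a_0(p_{3,j}+p_{3,-j})$ (possible by the remark closing Section 2) and then using the relation just obtained to remove the leading $a_4-a_{-3}$ term expresses $p_{3,1}-p_{3,0}$, and likewise $p_{3,-1}-p_{3,0}$, as explicit linear combinations of $a_1-a_{-1},a_2-a_{-2},a_3-a_{-3}$ and the $\theta$-antisymmetric axis differences $a_1-a_0,a_2-a_{-1},a_3-a_{-2}$. Next I compute the product $a_{-1}a_2=p_{3,-1}+\eta(a_{-1}+a_2)$, substituting for $p_{3,-1}$ in terms of $p_{3,0}$, $p_{3,1}$ and axes. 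Because $\theta$ interchanges $a_{-1}$ and $a_2$ this product is $\theta$-fixed; but $\theta$ fixes $p_1$ and $p_{3,-1}$, swaps $p_{2,0}\leftrightarrow p_{2,1}$ and $p_{3,0}\leftrightarrow p_{3,-2}$, and sends $a_i\mapsto a_{1-i}$, so imposing $\theta$-invariance of the rewritten expression gives — after cancelling a factor that is nonzero since $\xi\neq4\eta$ — an identity of the shape $\alpha\cdot\bigl(p_{3,0}-p_{3,1}+(\text{explicit axis combination})\bigr)=0$. Hence either $\alpha=0$, or $p_{3,0}-p_{3,1}$ equals that axis combination; in the latter case, comparing with the $p_3$-analogue of (\ref{p2linrel}) forces $\eta=\tfrac12$, $p_{3,0}=p_{3,1}$ and $\alpha(\beta-1)=0$, exactly as the analogous comparison in Claim \ref{lem41} produced its last two cases. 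In the remaining case $\alpha=0$ the relation reads $a_3-a_{-3}=-\beta(a_1-a_{-1})$; rerunning the same computation with the $\theta$-fixed product $a_{-2}a_3$ in place of $a_{-1}a_2$ yields either $\beta=0$ — which is (i) — or $\eta=\tfrac12$ with $p_{3,0}=p_{3,1}$, which is (ii) (here $\alpha(\beta-1)=0$ is automatic).

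\textbf{The main obstacle.}
The real content is the second paragraph. The $p_3$-analogue of (\ref{p2linrel}) must be worked out explicitly (its coefficients depend on $\lambda_1$ and degenerate at certain small rational ratios $\xi/\eta$, which is what ultimately produces the constraint $\eta=\tfrac12$), and the product $a_{-1}a_2$ must be pushed through $\theta$-invariance while correctly bookkeeping all the $p_{3,j}$ and avoiding circular substitutions — in particular the reduction using $a_2=a_{-2}-\alpha^{-1}(a_3-a_{-3})-\alpha^{-1}\beta(a_1-a_{-1})$ is available only when $\alpha\neq0$, so the case $\alpha=0$ has to be handled directly as above. Once the identity $\alpha\cdot(\cdots)=0$ is in hand, reading off (i) and (ii) is formally identical to the end of the proof of Claim \ref{lem41}.
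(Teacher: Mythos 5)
Your first paragraph is correct and coincides with the paper's own argument: combining (\ref{p2linrel}) with the $\tau_0$-invariance of $p_{2,1}-p_{2,0}$ and the standing hypothesis $\xi\neq4\eta$ produces the relation $a_3-a_{-3}+\alpha(a_2-a_{-2})+\beta(a_1-a_{-1})=0$, and your antisymmetrization computation and the resulting values of $\alpha,\beta$ are right.

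The dichotomy is where the content lies, and there your proposal has a genuine gap. First, the invariance identity you predict does not have the shape $\alpha\cdot(\cdots)=0$. What one actually gets is: from the $\tau_0$-invariance of $a_2a_{-2}$ (rewritten via the relation), $\alpha(p_{3,1}-p_{3,-1})=(2\eta-1)\beta(a_2-a_{-2})$, and from the flip-invariance of $a_{-2}a_3$,
\[
\beta(p_{3,0}-p_{3,1})+\alpha(\beta-1)(p_{2,0}-p_{2,1})+(1-2\eta)\alpha\beta(a_{-1}-a_2)+(1-2\eta)\alpha(a_{-2}-a_3)=0,
\]
which carries no overall factor of $\alpha$; the conclusion $\alpha(\beta-1)=0$ is not read off from a factorization but from a case analysis on these two identities. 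Second, and more seriously, your case analysis cannot close: the case $\alpha\neq0$, $\beta=0$ does not land in (ii), since there $\alpha(\beta-1)=-\alpha\neq0$, and it is not killed by invariance identities alone. In the paper this case forces $\alpha=1$, hence (translating the relation) $a_5=a_{-5}$, and is then excluded by a separate argument using the structure of $\langle a_0,a_2\rangle_{alg}$ and the invariance of $p_{3,1}$ --- a step with no counterpart in Claim \ref{lem41}, so the appeal to that claim does not cover it. Finally, the computations you defer are the entire substance of the second half: note that $a_{-1}a_2=p_{3,-1}+\eta(a_{-1}+a_2)$ is tautologically $\theta$-fixed (as $\theta$ swaps $a_{-1}$ and $a_2$ and fixes $p_{3,-1}$), so extracting any constraint from it requires first expressing $p_{3,-1}$ through $p_{3,0}$, $p_{3,1}$ and axes --- precisely the unperformed work, and also not where the constraint $\eta=\tfrac12$ actually originates (it comes from the coefficients $(1-2\eta)\alpha\beta$ and $(1-2\eta)\alpha$ above, not from degeneration of a $p_3$-analogue of (\ref{p2linrel})).
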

\begin{proof}
Since $p_{2,1}-p_{2,0}\in\sum_{i=-1}^{2}\mathbb{F}a_i+\frac{\xi-4\eta}{4}(a_3-a_{-2})$ by (\ref{p2linrel}) and $p_{2,0}-p_{2,1}$ is $\tau_0$-invariant, $\adim\leq6$ and there exist $\alpha,\beta\in\mathbb{F}$ such that $a_3-a_{-3}+\alpha(a_2-a_{-2})+\beta(a_1-a_{-1})=0$.
Then $a_2a_{-2}=-\alpha p_{3,-1}+(1-\beta)p_{2,0}+\alpha p_1+\eta a_{-2}+(\eta+(1-2\eta)\beta)a_2$.
By the $\tau_0$-invariance, $\alpha(p_{3,1}-p_{3,-1})=(2\eta-1)\beta(a_2-a_{-2})$.
Thus $(2\eta-1)\beta(a_3-a_{-3}+a_2-a_{-2}+a_1-a_{-1})=0$ and then $\alpha=\beta=1$, $\beta=0$ or $\eta=\frac{1}{2}$.
Furthermore,
\begin{eqnarray*}
a_{-2}a_3=&&\alpha^2p_{3,-1}-\beta p_{3,1}+\alpha(\beta-1)p_{2,0}+(-\alpha^2+\beta+1)p_1\\
&&+\eta a_3+(\eta+(1-2\eta)\alpha)a_{-2}+(1-2\eta)\alpha\beta a_2.
\end{eqnarray*}
By the flip-invariance, 
$$\beta(p_{3,0}-p_{3,1})+\alpha(\beta-1)(p_{2,0}-p_{2,1})+(1-2\eta)\alpha\beta(a_{-1}-a_2)+(1-2\eta)\alpha(-a_3+ a_{-2})=0.$$
If $\alpha=0$ and $\beta\neq0$, then $\eta=\frac{1}{2}$ and $p_{3,0}=p_{3,1}$.
If $\beta=0$, then $\alpha=1$. 
In this case, $a_5=a_{-5}$ and then by considering $\langle a_0.a_2\rangle_{alg}$ and the invariance of $p_{3,1}$, this case is not appropriate.
If $\alpha\neq0\neq\beta$, then 
By the flip-invariance, $\beta=1$ and $2\eta=1$.
Hence $p_{3,0}=p_{3,1}$.
Thus if $(\alpha,\beta)\neq(0,0)$, then $\eta=\frac{1}{2}$, $p_{3,0}=p_{3,1}$ and $\alpha(\beta-1)=0$.
\end{proof}

\begin{claim}
\label{lem61}
\sl
Let $\xi=2\eta$.
Then $M$ is isomorphic to $\sixone{\xi}$ or $\rmsix_1(\frac{-2}{7},\frac{-1}{7})^{\times}$.
\end{claim}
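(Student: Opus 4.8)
The plan is to show that the hypothesis $\xi=2\eta$ forces $M$ into the configuration underlying $\sixone{\xi}$, and then build a covering map $\sixone{\xi}\to M$. Since $\xi=2\eta$ and $\xi\neq1$, the value $\eta=\frac12$ is impossible, so alternative (ii) of Claim \ref{lemst6} cannot occur and we are in case (i): $a_3-a_{-3}=0$, i.e.\ $a_3=a_{-3}$. Applying the automorphisms $f_k\in G$ (recall $(\theta\circ\tau_0)(a_j)=a_{j+1}$, so $f_1(a_j)=a_{j+1}$) to this relation yields $a_{i+6}=a_i$ for all $i\in\mathbb{Z}$; hence $\adim=6$, the set $\{a_{-2},a_{-1},a_0,a_1,a_2,a_3\}$ is a basis of $\saxes$, and $G\cong D_{12}$.

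Next I would pin down all products. By Lemma \ref{lem3} and $\dim M\geq\adim=6$ we have $p_1\neq0$, and a $G$-invariance argument as in Claim \ref{lem31} shows $p_1\notin\saxes$, so $M=\mathbb{F}p_1\oplus\mathbb{F}q\oplus\saxes$ once an auxiliary element $q$ has been produced. The specialization $\xi=2\eta$ of the lemma in Section~2 gives $p_{2,0}\in\mathbb{F}p_1+\sum_{i=-1}^{1}\mathbb{F}a_i-\frac{\eta}{2}(a_2+a_{-2})$; combined with the $\tau_0$-invariance of $p_{2,0}$ and the identity $p_{2,0}-p_{2,1}=\theta(p_{2,1}-p_{2,0})$, this reduces $p_{2,0}$ (and hence $p_{2,1}$) to a couple of unknown scalars. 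Imposing the $\tau_0$- and $\theta$-invariance of the distance-two products $a_0a_2$, $a_{-1}a_2$ and of the distance-three product $a_0a_3$, the $G$-invariance of $p_1^2$, $p_1p_{2,0}$ and $p_{2,0}^2$, the Seress condition, and the constraint $\adim=6$ (which forbids any further linear relation among the six axes), I would solve for these scalars. This should force $\lambda_1=\frac{\xi}{2}$ and produce the explicit expressions $p_{2,0}=-\frac{\xi}{4}(a_0+a_2+a_{-2})$ and $p_{3,0}=p_{3,1}$, together with the values of $p_1a_i$, $p_1^2$ and all the $a_ia_j$, which then coincide with the defining relations of $\sixone{\xi}$ in the case $\eta=\frac{\xi}{2}$.

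Set $q:=p_{3,0}+2p_1-\frac{\xi}{2}\sum_{i=-2}^{3}a_i$. Since $p_{3,0}=p_{3,1}$, the element $q$ is $G$-invariant, so $qa_i=\pi a_i$ for all $i\in\mathbb{Z}$ with one and the same $\pi$; evaluating $\pi=-\frac{7\xi^2+2\xi}{4}$ and applying Lemma \ref{lem2}, we get $q\,p_{i,j}=\pi\,p_{i,j}$ for all $i,j$, hence $q$ acts as the scalar $\pi$ on all of $M$ and $q^2=\pi q$. The relations assembled above then show that $\hat a_i\mapsto a_i$, $\hat p_1\mapsto p_1$, $\hat q\mapsto q$ extends to a surjective algebra homomorphism $\sixone{\xi}\to M$, so $M$ is isomorphic to $\sixone{\xi}$ or to a proper quotient. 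As $M$ is not of Jordan type and $\adim=6$, any ideal $I$ with $M\cong\sixone{\xi}/I$ must lie in $\eisp{a_0}{0}$ and meet $\saxes$ trivially; by the discussion of $\sixone{\xi}$ in Section~3 the only nonzero possibility is $I=\mathbb{F}\hat q$, which forces $\xi=-\frac{2}{7}$ (so $\eta=-\frac17$) and $\chf{F}\neq7$, giving $M\cong\rmsix_1(\frac{-2}{7},\frac{-1}{7})^{\times}$. The main obstacle is the middle paragraph: organising the several flip- and $G$-invariance identities under the period-six identification of the axes so as to force $\lambda_1=\frac{\xi}{2}$ and, in particular, to rule out a spurious $p_1$-component in $p_{2,0}$ and $p_{3,0}$; once the structure constants are fixed, producing the homomorphism and reading off the quotients is routine.
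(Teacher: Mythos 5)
Your proposal follows essentially the same route as the paper's proof: $\eta\neq\frac12$ rules out case (ii) of Claim \ref{lemst6} and gives $a_3=a_{-3}$, the structure constants are then pinned down by invariance, the auxiliary element $q=p_{3,0}+2p_1-\eta(a_3+a_2+a_{-2}+a_1+a_{-1}+a_0)$ acts as the scalar $-(7\eta^2+\eta)=-\frac{7\xi^2+2\xi}{4}$ by Lemma \ref{lem2}, and the quotient analysis singles out $\rmsix_1(\frac{-2}{7},\frac{-1}{7})^{\times}$. The one place where you only outline what the paper actually executes is your middle paragraph: the paper closes it with exactly two invariance checks, namely the $\tau_0$-invariance of $p_{2,1}$ (read off from the $\xi=2\eta$ expression for $p_{2,1}$) which yields $\lambda_1=\eta$ or $\eta=\frac14$ and in either case $p_{2,1}=-\frac{\eta}{2}(a_3+a_1+a_{-1})$, followed by the $f_3$-invariance of $p_1^2$ computed from $p_1p_1=\frac{1}{2\eta}a_0(z_1x_1-z_1z_1)-(z_1x_1-p_1p_1)$, which forces $\lambda_1=\eta$, $p_1^2=\frac{\eta^2}{4}p_{3,0}-\frac{\eta^2+\eta}{2}p_1$ and hence $p_{3,0}=p_{3,1}$; your longer list of conditions would also work but is not needed.
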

\begin{proof}
In this case, $\eta\neq\frac{1}{2}$ and then $a_3=a_{-3}$.
Since
$$p_{2,1}\in\frac{2(4\eta-1)(\lambda_1-\eta)}{\eta}p_1+(4\eta-1)(\lambda_1-\eta)(a_0+a_2)-\frac{\eta}{2}(a_3+a_{-1})+\mathbb{F}a_0,$$
$\lambda_1=\eta$ or $\eta=\frac{1}{4}$ by the $\tau_0$-invariance.
In both cases, $p_{2,1}=-\frac{\eta}{2}(a_3+a_1+a_{-1})$.
Then
\begin{eqnarray*}
p_1p_1&=&\frac{1}{2\eta}a_0(z_1x_1-z_1z_1)-(z_1x_1-p_1p_1)\\
&\in&\frac{\eta^2}{4}p_{3,0}+\frac{2(1-2\eta)\lambda_1+3\eta^3-4\eta}{2}p_1+\frac{2\eta(1-2\eta)(\lambda_1-\eta)}{4}(a_1+a_{-1})\\
&&+\mathbb{F}a_0.
\end{eqnarray*}
Since $f_3(p_1p_1)=p_1p_1$, $\lambda_1=\eta$ and $p_1p_1=\frac{\eta^2}{4}p_{3,0}-\frac{\eta^2+\eta}{2}p_1$.
Therfore $p_{3,0}=p_{3,1}$.
Set $p_3=p_{3,0}$ and
$$q=p_3+2p_1-\eta(a_3+a_2+a_{-2}+a_1+a_{-1}+a_0).$$
Then by Lemma \ref{lem2}, $\eisp{q}{-(7\eta^2+\eta)}\supset\mathbb{F}p_1+\mathbb{F}p_3+\sum_{i\in\mathbb{Z}}\mathbb{F}a_i$.
Thus $M=\mathbb{F}p_1+\mathbb{F}p_3+\sum_{i\in\mathbb{Z}}\mathbb{F}a_i$ and there exists a surjective homomorphism from  $\sixone{\xi}$ to $M$ such that $\hat{a}_i\mapsto a_i$, $\hat{p}_1\mapsto p_1$ and $\hat{q}\mapsto q$.

When $M$ is isomorphic to $\sixone{\xi}/I$ for an ideal $I\neq\{0\}$, all elements of $I$ are $G$-invariant because $M$ is of axial dimension 6.
Thus $I=\mathbb{F}q$ and $qw=0$ for all $w\in M$.
Hence $M$ is isomorphic to $\rmsix_1(\frac{-2}{7},\frac{-1}{7})^{\times}$.
\end{proof} 
Since $\eta\neq1$, $\xi\neq2$ in this case.
Furthermore, the quotient  $\rmsix_1(\frac{-2}{7},\frac{-1}{7})^{\times}$ exists only if $\chf{F}\neq7$.

Thus we may assume $\xi\neq2\eta$ and $\xi\neq4\eta$.
\begin{claim}
\label{lem62}
\sl
$M$ is isomorphic to $\sixtwo{\xi}$, $\rmsix_2(\frac{2}{3},\frac{-1}{3})^{\times}$ or $\rmsix_2(\frac{1\pm\sqrt{97}}{24},\frac{53\pm5\sqrt{97}}{192})^{\times}$.
\end{claim}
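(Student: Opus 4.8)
The plan is to reuse the template of the preceding claims, now with the extra hypotheses $\xi\neq 2\eta$ and $\xi\neq 4\eta$. Claim~\ref{lemst6} already gives $\adim=6$ together with a relation $a_3-a_{-3}+\alpha(a_2-a_{-2})+\beta(a_1-a_{-1})=0$ lying in its case (i) or (ii). The first task is to eliminate case (ii) and all of its sub-cases with $(\alpha,\beta)\neq(0,0)$ (where $\eta=\frac{1}{2}$, $p_{3,0}=p_{3,1}$ and $\alpha(\beta-1)=0$): computing the coefficients of $a_0p_{2,1}$ and $a_0p_{3,1}$ and of the products $a_2a_{-2}$ and $a_{-2}a_3$, and imposing the flip- and $\tau_0$-invariance of the latter together with the Jordan-type structure of $\langle a_0,a_2\rangle_{alg}$, should collapse every such branch either to $\xi=2$ (which is $\xi=4\eta$, excluded) or to $(\xi,\eta,\lambda_1)=(2,\frac{1}{2},1)$, impossible since $M$ is assumed not to be a quotient of $\infalg$ by Step~\ref{propminf}. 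Hence $\alpha=\beta=0$, i.e.\ $a_3=a_{-3}$, and then $a_{i+6}=a_i$ for all $i$, so $G\cong D_{12}$.

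With $a_3=a_{-3}$ one has $\tau_0(a_3)=a_3$, hence $a_3\in\eisp{a_0}{1}\oplus\eisp{a_0}{0}\oplus\eisp{a_0}{\xi}$ and $\langle a_0,a_3\rangle_{alg}$ is a primitive axial algebra of Jordan type $\xi$; Lemma~\ref{lemhall} then pins down $\lambda_3$, the degenerate options (e.g.\ $\dim\langle a_0,a_3\rangle_{alg}\le 2$, or $\dim\eisp{a_0}{\xi}=2$, or $\dim\eisp{a_0}{0}=2$) being discarded by showing they reduce to earlier cases or to $\infalg$. I would next determine $p_{2,1}-p_{2,0}$ from (\ref{p2linrel}) and its $\tau_0$-invariance, and $p_{3,1}-p_{3,0}$ similarly, reading off $\lambda_1$ and $\lambda_2$; along the way $\eta$ should be forced to equal $\frac{-\xi^2}{4(2\xi-1)}$, which in particular gives $\xi\notin\{0,1,\frac{4}{9},\frac{2}{5},-4\pm2\sqrt5\}$, and $\lambda_1=\frac{-\xi^2(3\xi-2)}{16(2\xi-1)^2}$. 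Then, computing $p_1p_1$, $p_1p_{2,0}$ and $p_{2,0}p_{2,0}$ by means of the Seress identity and (\ref{multiaipij}) in terms of $p_1$, $r_{2,0}=a_0a_2-\eta(a_0+a_2)$ and the axes, and imposing their $G$-invariance, yields the remaining structure constants — in particular the scalar $-\frac{(3\xi-2)(5\xi-2)(12\xi^2-\xi-2)}{8(2\xi-1)(9\xi-4)}$ by which the ``$\hat q$'' element acts — and hence a surjective homomorphism $\sixtwo{\xi}\to M$ with $\hat a_i\mapsto a_i$.

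Finally I would analyse the kernel $I$: it is $G$-invariant (being the kernel of a map determined on the generating axes) and satisfies $I\cap\saxes=0$ since $\adim M=6$; inspecting the structure of $\sixtwo{\xi}$ shows the only possibilities are $I=0$ or $I=\mathbb{F}\hat q$, exactly as for $\sixone{\xi}$ in Claim~\ref{lem61}. In the first case $M\cong\sixtwo{\xi}$. In the second, $\hat q$ must act as $0$, so $(3\xi-2)(5\xi-2)(12\xi^2-\xi-2)=0$; since $\xi=\frac{2}{5}$ is excluded, either $\xi=\frac{2}{3}$, giving $M\cong\rmsix_2(\frac{2}{3},\frac{-1}{3})^{\times}$ (which exists only when $\chf{F}\neq3$), or $12\xi^2-\xi-2=0$, i.e.\ $\xi=\frac{1\pm\sqrt{97}}{24}$, giving $M\cong\rmsix_2(\frac{1\pm\sqrt{97}}{24},\frac{53\pm5\sqrt{97}}{192})^{\times}$ (which needs $\sqrt{97}\in\mathbb{F}$ and $\chf{F}\neq3,11$). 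The main obstacle is the middle step: correctly extracting $\lambda_1,\lambda_2,\lambda_3$ and all the structure constants from the $G$-invariance of $p_1p_1$, $p_1p_{2,0}$ and $p_{2,0}p_{2,0}$, while disposing of each degenerate branch by reducing it to a case already treated.
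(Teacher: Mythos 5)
Your overall strategy coincides with the paper's: first force $\alpha=\beta=0$ (so $a_3=a_{-3}$), then extract $\eta$, $\lambda_1$, $\lambda_2$ and the structure constants from invariance of $p_{2,1}-p_{2,0}$, $p_{3,0}$, $p_1p_1$, build a surjection $\sixtwo{\xi}\to M$, and classify the kernel as $0$ or $\mathbb{F}\hat q$ exactly as for $\sixone{\xi}$. The endgame (reading the quotients off the eigenvalue $-\frac{(3\xi-2)(5\xi-2)(12\xi^2-\xi-2)}{8(2\xi-1)(9\xi-4)}$ of $\hat q$) is also the paper's.

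There is, however, a genuine gap at the pivotal middle step: you assert that the invariance computations ``force'' $\eta=\frac{-\xi^2}{4(2\xi-1)}$. They do not. The two invariance conditions on $\lambda_1$ (from $p_{2,1}-p_{2,0}$ and from $p_{3,0}$) combine into a condition that \emph{factors}, giving $\eta=\frac{-\xi^2}{4(2\xi-1)}$ \emph{or} $\eta=\frac{\xi-1}{2}$, and the second branch is not excluded by anything you have said. Ruling it out is a substantial part of the paper's argument: there one uses $\lambda_3=\frac{\xi}{2}$ and $\lambda_2=\frac{9\xi^2-7\xi+2}{8(2\xi-1)}$ from the Jordan-type subalgebras $\langle a_0,a_3\rangle_{alg}$ and $\langle a_0,a_2\rangle_{alg}$, the linear dependence $x_3\in\mathbb{F}x_1+\mathbb{F}x_2$ to get $(5\xi^2-6\xi+2)(\xi^2-\xi+1)=0$, hence $\xi^2-\xi+1=0$, and then an explicit computation of $p_{2,0}$ and $p_{3,0}$ in the cases $\dim M=7$ and $\dim M=6$ to reach a contradiction. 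Without this your classification could admit spurious algebras along the line $\eta=\frac{\xi-1}{2}$. A secondary, smaller issue: your elimination of case (ii) of Claim \ref{lemst6} predicts a collapse to $\xi=2$ or to $(\xi,\eta,\lambda_1)=(2,\frac12,1)$, but the actual obstructions are different in kind --- for $\alpha=0$, $\beta\neq0$ one gets $0=p_{3,0}\in\frac{\xi-1}{2}(a_3-a_{-2})+\sum_{i=-1}^{2}\mathbb{F}a_i$, contradicting $\adim\geq6$, and for $\beta=1$ one is driven to $(\alpha,\xi)=(-4,3)$, where $\langle a_0,a_2\rangle_{alg}$ fails to have the required structure --- so that part of your outline would need to be reworked rather than merely filled in.
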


\begin{claim}
\label{lem621}
\sl
$\alpha=\beta=0$.
\end{claim}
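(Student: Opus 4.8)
The plan is a proof by contradiction, ruling out case (ii) of Claim~\ref{lemst6}. So suppose $(\alpha,\beta)\neq(0,0)$. Then by Claim~\ref{lemst6} we are in its case (ii): $\eta=\frac12$, $p_{3,0}=p_{3,1}$ and $\alpha(\beta-1)=0$. Since the present subsection assumes $\xi\neq2\eta$ and $\xi\neq4\eta$ and we always have $\xi\notin\{0,1\}$, this forces $\xi\notin\{0,1,2\}$. Moreover the proof of Claim~\ref{lemst6} already excludes $\beta=0$ together with $(\alpha,\beta)\neq(0,0)$, so exactly two cases remain: (a) $\alpha=0$, $\beta\neq0$, with relation $a_3-a_{-3}+\beta(a_1-a_{-1})=0$; and (b) $\alpha\neq0$, $\beta=1$, with relation $a_3-a_{-3}+\alpha(a_2-a_{-2})+(a_1-a_{-1})=0$. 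I would treat the two separately but by the same mechanism.

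The first move in each case is to exploit $p_{3,0}=p_{3,1}$: since $\theta$ interchanges $p_{3,0}$ and $p_{3,1}$ while $\tau_0$ fixes $p_{3,0}$, the element $p_{3,0}$ is $G$-invariant, and in particular $a_0(p_{3,1}-p_{3,0})=0$. Computing $a_0(p_{3,1}+p_{3,-1})$ by the fusion rule (the computation carried out in the proof of Claim~\ref{lem64}) and comparing with the value of $a_0p_{3,0}$ given by (\ref{multiaipij}), then applying $f_i$-invariance to the difference, pins down $\lambda_1$; using that $\langle a_0,a_2\rangle_{alg}$ and $\langle a_0,a_3\rangle_{alg}$ are $2$-generated axial algebras of Majorana type admitting a flip (so Lemma~\ref{lemhall} and the lower-dimensional cases already treated apply to them) likewise pins down $\lambda_2$ and $\lambda_3$. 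The outcome is explicit rational expressions for $\lambda_1,\lambda_2,\lambda_3$ in $\xi$.

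Next I would feed these values into the products forced by the length-$6$ relation. Multiplying the relation by $a_0$, by $p_1$ and by $p_{2,0}$, and invoking the $G$-invariance of $p_1p_1$ and $p_1p_{2,0}$ together with the $\tau_0$- and flip-invariance of $p_{2,0}-p_{2,1}$, $p_{3,0}-p_{3,1}$, $a_{-1}a_3$ and $a_{-2}a_3$ — precisely the mechanism of the proofs of Claims~\ref{lemst6} and \ref{lem64} — produces a system of polynomial equations in $\xi$ together with $\beta$ (in case (a)) or $\alpha$ (in case (b)). I expect that in both cases this system is inconsistent with $\xi\notin\{0,1,2\}$: every solution either has $\xi\in\{0,1,2\}$ (excluded here), or forces $(\xi,\eta,\lambda_1)=(2,\tfrac12,1)$ — which by Step~\ref{propminf} makes $M$ a quotient of $\infalg$, contrary to our standing hypothesis — or yields a linear relation among the $a_i$ strictly shorter than the given one (contradicting $\adim=6$), or gives $p_1=0$ (contradicting Lemma~\ref{lem3}). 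Any of these contradicts $(\alpha,\beta)\neq(0,0)$, hence $\alpha=\beta=0$.

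The main obstacle is this last step. One must carry the explicit rational expressions for $\lambda_1,\lambda_2,\lambda_3$ through several quadratic products, each of which is constrained to lie in a prescribed sum of $a_0$-eigenspaces, and then verify that the accumulated constraints are genuinely incompatible in \emph{both} case (a) and case (b); the case $\alpha\neq0,\beta=1$ is the more delicate one, since there the span of the even-indexed axes is not obviously small and one must also read off the structure of $\langle a_0,a_2\rangle_{alg}$ from the appropriate lower-dimensional step. As in Claims~\ref{lemst6} and \ref{lem64}, the leverage comes entirely from the $\tau_0$- and flip-invariance of the ``distance~$2$'' and ``distance~$3$'' elements, which is what supplies enough independent equations to close the argument.
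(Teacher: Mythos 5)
Your setup is sound: reducing to case (ii) of Claim~\ref{lemst6}, noting that $\alpha(\beta-1)=0$ together with $(\alpha,\beta)\neq(0,0)$ leaves exactly the two subcases $\alpha=0,\ \beta\neq0$ and $\alpha\neq0,\ \beta=1$, and proposing to kill each by exploiting the $\tau_0$- and flip-invariance of $p_{2,0}-p_{2,1}$ and $p_{3,0}-p_{3,1}$. This is indeed the mechanism the paper uses. But the proposal stops exactly where the proof begins: you write that you ``expect'' the resulting polynomial system to be inconsistent, and you flag this as the main obstacle without resolving it. The entire content of the claim is that inconsistency, so what you have is a plan, not a proof.

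Concretely, the paper's argument in the subcase $\alpha=0$ does not go through $\lambda_1,\lambda_2,\lambda_3$ at all: it substitutes $\eta=\tfrac12$ into the expression (\ref{p2linrel}) for $p_{2,1}-p_{2,0}$, combines it with the relation $a_3-a_{-3}+\beta(a_1-a_{-1})=0$, and deduces that $p_{3,0}=p_{3,1}$ forces $p_{3,0}$ to lie in $\tfrac{\xi-1}{2}(a_3-a_{-2})+\sum_{i=-1}^{2}\mathbb{F}a_i$ with value $0$; since $\xi\neq1$ this is a nontrivial linear relation among $a_{-2},\ldots,a_3$, contradicting $\adim\geq6$. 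In the subcase $\beta=1$ the invariance of $p_{2,1}-p_{2,0}$ first pins down $\lambda_1=\frac{2\xi^2-9\xi+5}{4(\xi^2-4\xi+2)}$, then an explicit computation of $p_{3,0}$ (as $a_0$ applied to an element of $\eisp{a_0}{0}$) together with its required invariance forces $(\alpha,\xi)=(-4,3)$, which is then excluded because $\langle a_0,a_2\rangle_{alg}$ cannot have the structure required of a dihedral $(3,\tfrac12)$-algebra. None of these specific derivations---nor any substitute for them---appears in your write-up, and without them the claim is not established. (A minor additional point: since $\eta=\tfrac12$ here, the standing hypothesis $\xi\neq4\eta$ already gives $\xi\neq2$, so the escape route through $(\xi,\eta,\lambda_1)=(2,\tfrac12,1)$ and Step~\ref{propminf} that you list among the possible outcomes cannot in fact arise in this claim.)
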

\begin{proof}
Assume $(\alpha,\beta)\neq(0,0)$.
By Claim \ref{lemst6}, $\alpha(\beta-1)=0$, $\eta=\frac{1}{2}$ and $p_{3,1}=p_{3,0}$.
If $\alpha=0$,  $p_{2,1}-p_{2,0}=\frac{\xi-2}{4}(a_3-a_{-2}-a_{2}+a_{-1}+(\beta+1)(a_1-a_0))$.
Therefore 
$$0=p_{3,0}=p_{3,1}\in\frac{(\xi-1)}{2}(a_3-a_{-2})+\sum_{-1}^2\mathbb{F}a_i.$$
Thus we may assume $\beta=1$.
Since $p_{2,1}=p_{2,0}+\frac{\xi-2}{4}(a_3-a_{-2}+(\alpha-1)(a_2-a_{-1})+(2-\alpha)(a_1-a_0)$ by the invariance,
$\lambda_1=\frac{2\xi^2-9\xi+5}{4(\xi^2-4\xi+2)}$.
Furthermore, since $p_{3,0}=p_{3,1}$,
\begin{eqnarray*}
p_{3,0}&=&a_0(\frac{4}{\xi-2}(p_{2,1}-p_{2,0})+a_{-2}-(\alpha-1)(a_2-a_{-1})+(\alpha-2)(a_1-a_0))\\
&&-\eta(a_0+a_3)\\
&\in&\mathbb{F}p_1+\mathbb{F}(p_{2,0}+p_{2,1})+\mathbb{F}(a_3+a_{-2}+(\alpha+1)(a_2+a_{-1})+(\alpha+2)(a_1+a_0)).
\end{eqnarray*}
Thus $(\alpha,\xi)=(-4,3)$.
Then $\langle a_0,a_2\rangle_{alg}$ cannot be a dihedral $(3,\frac{1}{2})$-algebra.
\end{proof}

\begin{proof}[Proof of Claim \ref{lem62}]
By Claim \ref{lem621}, we may assume $a_3=a_{-3}$.
Since $p_{2,1}-p_{2,0}=\frac{\xi-4\eta}{4}(a_3-a_{-2}-a_2+a_{-1}+a_1-a_0)$ by the invariance,
$(2\xi^2-12\xi\eta-2\xi+8\eta)\lambda_1=4\eta^3-10\xi\eta^2-\xi\eta+6\eta^2$.
Since 
\begin{eqnarray*}
p_{3,0}&\in&\mathbb{F}a_0+\mathbb{F}p_1+\mathbb{F}(p_{2,0}+p_{2,1})+\mathbb{F}a_3-\frac{\xi^2}{4(\xi-4\eta)}(a_2+a_{-2})\\
&&+\frac{32\eta(1-2\xi)\lambda_1+\xi^2+20\xi\eta^2+24\xi\eta^2-8\xi\eta-16\eta^2}{4(\xi-2\eta)(\xi-4\eta)}(a_1+a_{-1})
\end{eqnarray*}
and $p_{3,0}\in\mathbb{F}p_1+\mathbb{F}(p_{2,1}+p_{2,0})+\mathbb{F}(a_0+a_3)+\mathbb{F}(a_1+a_{-1}+a_{2}+a_{-2})$ by the invariance,
$$16\eta(1-2\xi)\lambda_1=-\xi^3-8\xi^2\eta-12\xi\eta^2+4\xi\eta+8\eta^2.$$
Therefore $\eta=\frac{\xi-1}{2}$ or $\eta=\frac{-\xi^2}{4(2\xi-1)}$

If $\eta=\frac{-\xi^2}{4(2\xi-1)}$, then
\begin{eqnarray*}
p_{3,i}&=&\frac{2(2\xi-1)}{\xi}(2p_1-\frac{\xi^2}{4(2\xi-1)}(a_{i+1}+a_{i-1}))\\
&&-\frac{2(2\xi-1)}{5\xi-2}(2p_{2,i}-\frac{\xi^2}{4(2\xi-1)}(a_{i+2}+a_{i-2}))\\
&&+\frac{\xi^2}{4(2\xi-1)}a_{i+3}+\frac{\xi(29\xi^2-22\xi+4)}{4(2\xi-1)(5\xi-2)}a_i,
\end{eqnarray*}
\begin{eqnarray*}
p_1p_1&=&\frac{\xi^2(39\xi^2-22\xi+2)}{16(2\xi-1)^2}p_1-\frac{\xi^4(9\xi-4)}{32(2\xi-1)^2(5\xi-2)}p_{2,0}\\
&&-\frac{\xi^4(3\xi-1)(9\xi-4)}{128(2\xi-1)^3}(a_3+a_1+a_{-1})\\
&&-\frac{\xi^4(3\xi-1)(9\xi-4)}{64(2\xi-1)^2(5\xi-2)}(a_2+a_{-2}+a_0).
\end{eqnarray*}
Set
\begin{eqnarray*}
q&=&p_{2,0}+\frac{(3\xi-2)(5\xi-2)^2}{\xi^2(9\xi-4)}p_1\\
&&-\frac{(3\xi-2)(5\xi-2)}{8(2\xi-1)}(a_{3}+a_{1}+a_{-1})-\frac{21\xi^2-18\xi+4}{8(2\xi-1)}(p_{2}+p_{-2}+p_0).
\end{eqnarray*}
By Lemma \ref{lem2}, $\{p_1\}\cup\{p_{2,0}\}\cup\{a_i\mid i\in\mathbb{Z}\}\subset\eisp{q}{-\frac{(3\xi-2)(5\xi-2)(12\xi^2-\xi-1)}{8(2\xi-1)(9\xi-4)}}$.
Thus $M=\mathbb{F}p_1+\mathbb{F}q+\sum_{i=-2}^3\mathbb{F}a_i$ and then the map $\sixtwo{\xi}$ to $M$ such that $\hat{a}_i\mapsto a_i$, $\hat{p}_1\mapsto p_1$ and $\hat{q}\mapsto q$ is a surjective homomorphism.

If $M$ is not isomorphic to $\sixtwo{\xi}$, then $M=\sixtwo{\xi}/\mathbb{F}q$ and $qw=0$ for all $w\in M$ by a similar argument as Claim \ref{lem61}. 
Hence $M$ is isomorphic to $\rmsix_2(\frac{2}{3},\frac{-1}{3})^{\times}$ or $\rmsix_2(\frac{1\pm\sqrt{97}}{24},\frac{53\pm5\sqrt{97}}{192})^{\times}$.

If $\eta\neq\frac{-\xi^2}{4(2\xi-1)}$, then $\eta=\frac{\xi-1}{2}$, $\lambda_3=\frac{\xi}{2}$ and $\lambda_2=\frac{9\xi^2-7\xi+2}{8(2\xi-1)}$ by the structure of $\langle a_0,a_2\rangle_{alg}$ and $\langle a_0,a_3\rangle_{alg}$.
Since $x_3\in\mathbb{F}x_1+\mathbb{F}x_2$,
$\lambda_3-\eta+\frac{2\xi}{\xi-1}(\lambda_1-\eta)+2\xi(\lambda_2-\eta)=\frac{-\xi^2+\xi+1}{2}.$
Therefore $(5\xi^2-6\xi+2)(\xi^2-\xi+1)=0$.
Since $\eta\neq\frac{-\xi^2}{4(2\xi-1)^2}$, $\xi^2-\xi+1=0$.
Since 
\begin{eqnarray*}
a_0(z_1z_2)&\in&\mathbb{F}p_1+\mathbb{F}p_{2,0}+\mathbb{F}p_{2,1}+\sum_{i=-2}^1\mathbb{F}a_i+\mathbb{F}a_3\\
&&-\frac{\eta^2(2\eta-1)(\xi-2\eta)(\xi^2+8\xi\eta-4\eta)}{4(\xi-4\eta)(\xi^2-6\xi\eta+4\eta-\xi)}a_2,
\end{eqnarray*}
$p_{2,0}\in\mathbb{F}p_1+\sum_{i\in\mathbb{Z}}\mathbb{F}a_i$.
Let $\xi=\frac{1+\sqrt{-3}}{2}$.
Then $a_3+\frac{1+\sqrt{-3}}{2}(a_2+a_{-2})+\frac{1-\sqrt{-3}}{2}(a_1+a_{-1})+\rho_2a_0\in \eisp{a_0}{0}$ for some $\rho_2\in\mathbb{F}$.
If $\dim M=7$, then $a_2+a_{-2}+\nu_1(a_1+a_{-1})+\rho_1a_0\in \eisp{a_0}{\xi}$ or $a_3+\mu_1(a_2+a_{-2})+\nu_1(a_1+a_{-1})+\rho_1a_0\in \eisp{a_0}{\xi}$ for some $\mu_1,\nu_1,\rho_1\in\mathbb{F}$.
Then we can compute $p_{2,0}$ and $p_{3,0}$ by using these parameters.
But no parameter satisfies the appropriate invariance of $p_{2,0}$ and $p_{3,0}$.
Hence $\dim M=6$.
However, by a similar argument, the dimension of $M$ cannot be 6.
Hence $M$ cannot be an axial algebra of Majorana type in this case.
\end{proof}
Since $\xi\neq\eta,2\eta$ and $\eta\neq1$, $\xi\notin\{\frac{4}{9},\frac{2}{5},-4\pm2\sqrt{5}\}$.
Thus if $M$ is isomorphic to $\rmsix_2(\frac{2}{3},\frac{-1}{3})^{\times}$, then $\chf{F}\neq3$.
If $\mathbb{F}\neq3,11$, then both of $\rmsix_2(\frac{1\pm\sqrt{97}}{24},\frac{53\pm5\sqrt{97}}{192})^{\times}$ are appropriate.
In the case when $\mathbb{F}=11$, $(\frac{1\pm\sqrt{97}}{24},\frac{53\pm5\sqrt{97}}{192})=(2,7)$ or $(-1,1)$.
Thus only $\rmsix_2(2,7)^{\times}$ is appropriate.
Hence if $M$ is a quotient of $\sixtwo{\xi}$, $M$ is isomorphic to $\rmsix_2(\frac{2}{3},\frac{-1}{3})^{\times}$ with $\chf{F}\neq3$, $\rmsix_2(\frac{1\pm\sqrt{97}}{24},\frac{53\pm5\sqrt{97}}{192})^{\times}$ with $\chf{F}\neq3,11$ or $\rmsix_2(2,7)^{\times}$ with $\chf{F}=11$.

Thus Step \ref{prop6} is proved and then the proof of Main Theorem is completed.

\section*{Acknowledgments}
The author is grateful to Prof.\ Atsushi Matsuo for guidance throughout the work.

\end{document}